\theoremstyle{plain}
\newtheorem{Thm}[subsection]{Theorem}
\newtheorem{Cor}[subsection]{Corollary}
\newtheorem{Lem}[subsection]{Lemma}
\newtheorem{Prop}[subsection]{Proposition}
\newtheorem{Ex}[subsection]{Example}
\newtheorem{Rem}[subsection]{Remark}
\newcommand\nc{\newcommand}
\nc{\unl}{\underline}
\nc{\iso}{{\stackrel{\sim}{\longrightarrow}}}
\nc{\BC}{{\mathbb{C}}}
\nc{\BZ}{{\mathbb{Z}}}
\nc{\BN}{{\mathbb{N}}}
\nc{\BS}{{\mathbb{S}}}
\nc{\CA}{{\mathcal{A}}}
\nc{\CB}{{\mathcal{B}}}
\nc{\CU}{{\mathcal{U}}}
\nc{\sZ}{{\mathsf{Z}}}
\nc{\sN}{{\mathsf{N}}}
\nc{\sE}{{\mathsf{E}}}
\nc{\sz}{{\mathsf{z}}}
\nc{\fg}{{\mathfrak{g}}}
\nc{\gl}{{\mathfrak{gl}}}
\nc{\ssl}{{\mathfrak{sl}}}
\nc{\fq}{{\mathfrak{q}}}
\nc{\ft}{{\mathfrak{t}}}
\nc{\fm}{{\mathfrak{m}}}
\nc{\fd}{{\mathfrak{d}}}
\nc{\fe}{{\mathfrak{e}}}
\nc{\ff}{{\mathfrak{f}}}
\nc{\sd}{{\mathsf{d}}}
\nc{\sw}{{\mathsf{w}}}
\nc{\ad}{\mathrm{ad}}
\nc{\ssc}{\mathrm{sc}}
\nc{\wt}{\widetilde}
\nc{\fra}{\mathrm{frac}}
\nc{\Sym}{\mathrm{Sym}}
\nc{\CT}{{\mathrm{CT}}}
\nc{\op}{\mathrm{op}}
\nc{\DFK}{\mathrm{DFK}}
\begin{document}
\title[Difference operators via GKLO-type homomorphisms]
{Difference operators via GKLO-type homomorphisms: shuffle approach and application to quantum $Q$-systems}

\author{Alexander Tsymbaliuk}
  \address{A.T.: Purdue University, Department of Mathematics, West Lafayette, IN 47907, USA}
  \email{sashikts@gmail.com}

\begin{abstract}
We present a shuffle realization of the GKLO-type homomorphisms for shifted quantum affine, toroidal, and quiver algebras
in the spirit of~\cite{fo}, thus generalizing its rational version of~\cite{ft3} and the type $A$ construction of~\cite{ft2}.
As an application, this allows us to construct large families of commuting and $q$-commuting difference operators,
in particular, providing a convenient approach to the $Q$-systems where it proves a conjecture~of~\cite{dfk2}.
\end{abstract}

\maketitle


\section{Introduction}


\subsection{Summary}\label{ssec summary}
\

The key result of this note is the shuffle realization of the GKLO-type homomorphisms from various
shifted quantum ``loop'' algebras to the algebras of (localized) difference operators. We use this
to reinterpret the recent results of~\cite{dfk1,dfk2} on the quantum $Q$-systems of type $A$.
In the upcoming work, this will be also used as the main technical ingredient to:
\begin{enumerate}
\item[$\bullet$]
  prove the regularity of certain trigonometric $BCD$-type Lax matrices \\
  (generalizing the rational counterpart of~\cite{ft3}),

\medskip
\item[$\bullet$]
  develop the integral forms of $K$-theoretic Coulomb branches \\
  (generalizing the $A$-type case of~\cite{ft2}),

\medskip
\item[$\bullet$]
  study difference operators arising from large families of $q$-commuting elements in quantum affine algebras
  (generalizing~\cite{dfk1} with $\ssl_2$ been replaced by any simple $\fg$).
\end{enumerate}

\medskip
\noindent
The GKLO-type homomorphisms for the quantum loop algebras $U_q(L\fg)$ were first introduced in~\cite{gklo} (hence, their acronym).
Their analogues for the ``shifted'' versions (the shift refers to the fact that Cartan currents $\psi^\pm_i(z)$ start not necessarily
from $z^0$ modes, while the defining relations are kept unchanged) arise naturally in the recent study of the quantized
Coulomb branches, see~\cite{bfna,bfnb} and~\cite{ft1}, providing algebraic models for the geometric objects.

\medskip
\noindent
On the other hand, the shuffle approach provides a convenient combinatorial model for the positive and negative subalgebras
of such quantum algebras. An essential benefit of this approach is that it allows to work with various elements of quantum
algebras that are provided by complicated formulas in the original loop generators, making it hard to work with them directly.
In the present note, we focus on the following cases: quantum affine of any simple $\fg$, quantum toroidal of $\gl_1$ and $\ssl_n\ (n\geq 3)$
with two parameters, and quantum quiver algebras, for which the shuffle realizations were established
in~\cite{nt},~\cite{n1,n2}, and~\cite{nss}, respectively.

$\ $

Let $U^>_L$ denote the corresponding positive subalgebra, generated by the loop generators $\{e_{i,r}\}_{i\in I}^{r\in \BZ}$
(here, $I$ denotes a labeling set, while the subscript ``$L$'' is merely used to remind of the loop realization, in spirit of~\cite{d})
subject to the corresponding defining relations. Then, one considers an $\BN^I$-graded vector space
$\BS=\bigoplus_{\unl{k}\in \BN^I} \BS_{\unl{k}}$, with $\BS_{\unl{k}}$ consisting of multisymmetric rational functions
in the variables $\{x_{i,r}\}_{i\in I}^{1\leq r\leq k_i}$ subject to rather simple ``pole'' conditions, equipped with
an algebra structure via the shuffle product
$\star\colon \BS_{\unl{k}} \times \BS_{\unl{\ell}} \to \BS_{\unl{k}+\unl{\ell}}$ given by
\begin{multline*}
  F(\ldots,x_{i,1},\ldots,x_{i,k_i},\ldots) \star G(\ldots,x_{i,1},\ldots,x_{i,\ell_i},\ldots) :=
    \frac{1}{\prod_{i\in I} k_i! \cdot \ell_i!}\times \\
  \Sym
    \left(F\left(\{x_{i,r}\}_{i\in I}^{1\leq r\leq k_i}\right) G\left(\{x_{i',r'}\}_{i'\in I}^{k_{i'}<r'\leq k_{i'}+\ell_{i'}}\right)\cdot
    \prod_{i\in I}^{i'\in I} \prod_{r\leq k_i}^{r'>k_{i'}}\zeta_{ii'}\left(\frac{x_{i,r}}{x_{i',r'}}\right)\right) \,.
\end{multline*}
The rational $\zeta$-factors are specifically chosen to allow for an algebra embedding
\begin{equation}\label{eq:general embed}
  \Upsilon\colon U^>_L \hookrightarrow \BS \quad \mathrm{given\ by} \quad e_{i,r}\mapsto x_{i,1}^r
  \quad \mathrm{for\ all} \quad i\in I, r\in \BZ \,.
\end{equation}

\medskip
\noindent

On the other hand, (the restriction of) the aforementioned GKLO-type homomorphism
\begin{equation}\label{eq:general GKLO}
  \wt{\Phi}\colon U^>_L \longrightarrow \wt{\CA}_{\unl{a}}
\end{equation}
to the algebra $\wt{\CA}_{\unl{a}}$ of localized difference operators, generated by
$\{\sw_{i,r}^{\pm 1},D_{i,r}^{\pm 1}\}_{i\in I}^{1\leq r\leq a_i}$ as well as
$\{(\sw_{i,r}-\fq_i^m \sw_{i,s})^{-1}\}_{r\ne s}^{m\in \BZ}$ subject to
\begin{equation*}
  [\sw_{i,r},\sw_{j,s}]=0=[D_{i,r},D_{j,s}] \quad \mathrm{and} \quad
  D_{i,r}\sw_{j,s}=\fq_i^{\delta_{ij}\delta_{rs}} \sw_{j,s}D_{i,r} \quad \mathrm{for\ some} \quad \fq_i \,,
\end{equation*}
is explicitly given by specifying $\wt{\Phi}(e_{i,r})$, reminiscent of the Gelfand-Tsetlin formulas in type~$A$.

$\ $

Thus, our main construction is the algebra homomorphism
\begin{equation}\label{eq:general shuffle-GKLO}
  \hat{\Phi}\colon \BS \longrightarrow \wt{\CA}^{'}_{\unl{a}} \,,
\end{equation}
where $\wt{\CA}^{'}_{\unl{a}}$ denotes a localization of $\wt{\CA}_{\unl{a}}$ at some other elements $\sw_{i,r}-\gamma \sw_{j,s}$,
given by
\begin{equation}\label{eq:general shuffle rough}
  \BS_{\unl{k}}\ni E \ \overset{\hat{\Phi}}{\mapsto}
  \sum_{\substack{m^{(i)}_1+\ldots+m^{(i)}_{a_i}=k_i\\ m^{(i)}_r\in \BN \ \forall\, i\in I}}
  E\left(\Big\{\sw_{i,r} \fq_i^{-(p-1)}\Big\}_{i\in I, r\leq a_i}^{1\leq p\leq m^{(i)}_r}\right)\cdot
  \left(\mathrm{rational\ prefactor}\right)\cdot \prod_{i\in I}^{r\leq a_i} D_{i,r}^{-m^{(i)}_r}
\end{equation}
and such that its composition with $\Upsilon$ of~\eqref{eq:general embed} recovers $\wt{\Phi}$ of~\eqref{eq:general GKLO}:
\begin{equation}\label{eq:general shuffle=GKLO}
  \wt{\Phi}=\hat{\Phi}\circ \Upsilon \colon U^>_L \longrightarrow \wt{\CA}_{\unl{a}} \,.
\end{equation}
In particular, the image of $U^>_L$ under the composition~\eqref{eq:general shuffle=GKLO} is in the subalgebra $\wt{\CA}_{\unl{a}}$ of $\wt{\CA}^{'}_{\unl{a}}$.
This $\hat{\Phi}$ can be perceived as a trigonometric counterpart of a much older construction from~\cite{fo}.

\medskip
\noindent
We want to \textbf{emphasize} that this construction of $\hat{\Phi}$ is a general phenomenon that applies in a much wider setup.
However, if one wishes to remain in the realm of quantum algebras, then one needs to restrict $\hat{\Phi}$ to the image
of the embedding $\Upsilon$ of~\eqref{eq:general embed}. The latter is often described by certain ``wheel'' conditions,
see~(\ref{eq:wheel conditions},~\ref{eq:wheel conditions toroidal-1},~\ref{eq:wheel conditions toroidal-n},~\ref{eq:wheel conditions quiver general})
for the cases treated in the present note, which actually constitutes the core of the aforementioned shuffle algebra isomorphisms.

\medskip
\noindent
In the simplest case of quantum affine $\ssl_2$, some of the resulting difference operators can be patched nicely to form a $q$-commuting
family satisfying the quantum $Q$-system relations of type $A$. On the other hand, for the case of quantum toroidal $\gl_1$,
we obtain the famous Macdonald difference operators as well as their generalizations from~\cite{dfk2}. Finally, for the case of
quantum toroidal $\ssl_n$, the images of natural commutative subalgebras of the quantum toroidal $U^>_L$ give rise to
compelling large families of pairwise commuting difference operators (it is interesting to understand their relation to the
recent construction of~\cite{os}, if any).


\subsection{Outline of the paper}\label{ssec outline}
\

The structure of the present paper is the following:

\medskip
\noindent
$\bullet$
In Section~\ref{sec affine}, we recall the notion of shifted quantum affine algebras and the GKLO-type homomorphisms
$\wt{\Phi}_{\mu}^{\unl{\lambda},\unl{\sz}}$ of~\eqref{eq:GKLO-homom}, following~\cite{ft1}. The main result of this section
is Theorem~\ref{thm:shuffle homomorphism}, which provides a shuffle realization of $\wt{\Phi}_{\mu}^{\unl{\lambda},\unl{\sz}}$
restricted to the positive and negative subalgebras (actually, extending it to larger algebras $\BS^{(\fg)}$ and $\BS^{(\fg),\op}$,
whose elements are rational functions of~\eqref{pole conditions} that do not necessarily satisfy the wheel conditions~\eqref{eq:wheel conditions}).
As an application, we construct a natural family of elements in the shifted quantum affine algebras whose
$\wt{\Phi}_{\mu}^{\unl{\lambda},\unl{\sz}}$-images are given by simple and interesting formulas of Lemma~\ref{lem:image of important elements}.
In Remark~\ref{rem:relation to Feigin-Odesskii}, we explain the resemblance between our Theorem~\ref{thm:shuffle homomorphism}
and a much older result~\cite[Proposition~2]{fo}.

\medskip
\noindent
$\bullet$
In Section~\ref{sec Jordan}, we generalize the results of Section~\ref{sec affine} to the context of shifted quantum toroidal algebras
of $\gl_1$ (depending on two parameters). The main result of this section is Theorem~\ref{thm:shuffle homomorphism toroidal-1}, providing
shuffle realization of the restrictions of the homomorphisms $\wt{\Phi}_{a}^{\unl{\sz}}$ from Proposition~\ref{prop:homomorphism Jordan}
to the positive and negative subalgebras (again extended to the larger algebras $\BS$ and $\BS^{\op}$).
In Lemma~\ref{lem:image of important elements toroidal-1}, we derive interesting difference operators as
the images~of~(\ref{eq:important E-elements toroidal-1},~\ref{eq:important F-elements toroidal-1}).

\medskip
\noindent
$\bullet$
In Section~\ref{sec cyclic}, we generalize the results of Section~\ref{sec affine} to the context of shifted quantum toroidal
algebras of $\ssl_n$ (depending on two parameters). The main result of this section is Theorem~\ref{thm:shuffle homomorphism toroidal-n},
providing shuffle realization of the restrictions of the homomorphisms $\wt{\Phi}_{\unl{b}}^{\unl{a},\unl{\sz}}$ from
Proposition~\ref{prop:homomorphism toroidal-n} to the positive and negative subalgebras (extended to the larger algebras
$\BS^{[n]}$ and $\BS^{[n],\op}$). In Lemma~\ref{lem:image of important elements toroidal-n}, we get interesting difference
operators as the images of~(\ref{eq:important E-elements toroidal-n},~\ref{eq:important F-elements toroidal-n}).
In Example~\ref{rem:horizontal Heisenberg}, we use the shuffle descriptions~\cite{ft0,t,t2} of the Bethe and
horizontal Heisenberg subalgebras to construct large commutative families of difference~operators.

\medskip
\noindent
$\bullet$
In Section~\ref{sec quiver}, we generalize the results of Section~\ref{sec affine} to the context of (shifted) quantum quiver algebras
as recently introduced in~\cite{nss}. The main result of this section is Theorem~\ref{thm:shuffle homomorphism quiver}, providing
shuffle realization of the restrictions of the new GKLO-type homomorphisms from Proposition~\ref{prop:homomorphism quiver} to
the positive and negative subalgebras (extended to the larger algebras $\BS^{Q}$ and $\BS^{Q,\op}$), in analogy with
Theorems~\ref{thm:shuffle homomorphism},~\ref{thm:shuffle homomorphism toroidal-1},~\ref{thm:shuffle homomorphism toroidal-n}.

\medskip
\noindent
$\bullet$
In Section~\ref{sec q-system}, we present a shuffle interpretation of the quantum $Q$-system of type $A$, thus simplifying
proofs of~\cite[Theorems 2.10, 2.11]{dfk1}, see Propositions~\ref{prop:m-via-CT},~\ref{prop:dfk-part-1},~\ref{prop:dfk-part-2}.
We also match the difference operators of~\cite[\S6]{dfk1} with those from Section~\ref{sec affine} in the simplest case of $\fg=\ssl_2$,
see Lemma~\ref{lem:dfk-vs-gklo-1} and Proposition~\ref{prop:M-operators}. Finally, in Lemma~\ref{lem:finite generators}, we explain how
the images of the Cartan and negative subalgebras can be expressed via the images of finitely many elements in the positive subalgebra,
after a localization at two elements.

\medskip
\noindent
$\bullet$
In Section~\ref{sec tq-system}, we provide a shuffle interpretation of the $(t,q)$-deformed $Q$-system of type $A$ as recently
investigated in~\cite{dfk2}. In particular, we identify the generalized Macdonald operators~\eqref{eq:generalized Macdonald}
of~\cite{dfk2} with the elements of Lemma~\ref{lem:image of important elements toroidal-1}, see Proposition~\ref{prop:dfk-vs-GKLO}.
This clarifies a shuffle approach in~\cite{dfk2} and also establishes~\cite[Conjecture 1.17]{dfk2}, see Theorem~\ref{prop:dfk-conjecture}.


\subsection{Acknowledgments}
\

I am indebted to Boris Feigin, Michael Finkelberg, and especially Andrei Negu\c{t} for many enlightening discussions about shuffle algebras
and related structures; to Philippe Di Francesco and Rinat Kedem for a correspondence about their work~\cite{dfk1,dfk2,dfk3}
on quantum $Q$-systems; to the anonymous referees for useful suggestions that improved the exposition.
I~am gratefully acknowledging the support from NSF Grants DMS-$1821185$ and DMS-$2037602$.


\section{Shuffle realization of GKLO-type homomorphisms for $U^{\ssc}_{\mu^+,\mu^-}$}\label{sec affine}


\subsection{Shifted quantum affine algebra}\label{ssec shifted quantum affine}
\

Let $\fg$ be a simple Lie algebra, and $\{\alpha^\vee_i\}_{i\in I}$ (resp.\ $\{\alpha_i\}_{i\in I}$) be the
simple roots (resp.\ simple coroots) of $\fg$. Let $(\cdot,\cdot)$ denote the corresponding pairing on the
root lattice, and set $\sd_i:=\frac{(\alpha^\vee_i,\alpha^\vee_i)}{2}\in \{1,2,3\}$. Let $(c_{ij})_{i,j\in I}$
be the Cartan matrix of $\fg$, so that $\sd_i c_{ij} = (\alpha^\vee_i,\alpha^\vee_j)=\sd_j c_{ji}$. Let $\Lambda$
be the coweight lattice of $\fg$, and $\Lambda^+\subset \Lambda$ be the submonoid of dominant integral~weights.

\medskip
\noindent
Given coweights $\mu^\pm\in \Lambda$, set $\unl{b}^\pm=\{b^\pm_i\}_{i\in I}\in \BZ^I$ with $b^\pm_i:=\alpha^\vee_i(\mu^\pm)$.
Following~\cite[\S5(i)]{ft1}, we define the \emph{simply-connected version of shifted quantum affine algebra}, denoted by
$U^{\ssc}_{\mu^+,\mu^-}$ or $U^{\ssc}_{\unl{b}^+,\unl{b}^-}$, as the associative $\BC(q)$-algebra generated by
  $\{e_{i,r},f_{i,r},\psi^\pm_{i,\pm s^\pm_i}, (\psi^\pm_{i,\mp b^\pm_i})^{-1}\}_{i\in I}^{r\in \BZ, s^\pm_i\geq -b^\pm_i}$
with the following defining relations (for all $i,j\in I$ and $\epsilon,\epsilon'\in \{\pm\}$):
\begin{equation}\tag{U1}\label{U1}
  [\psi_i^\epsilon(z),\psi_j^{\epsilon'}(w)]=0 \,,\quad
  \psi^\pm_{i,\mp b^\pm_i}\cdot (\psi^\pm_{i,\mp b^\pm_i})^{-1}=
  (\psi^\pm_{i,\mp b^\pm_i})^{-1}\cdot \psi^\pm_{i,\mp b^\pm_i}=1 \,,
\end{equation}
\begin{equation}\tag{U2}\label{U2}
  (z-q_i^{c_{ij}}w)e_i(z)e_j(w)=(q_i^{c_{ij}}z-w)e_j(w)e_i(z) \,,
\end{equation}
\begin{equation}\tag{U3}\label{U3}
  (q_i^{c_{ij}}z-w)f_i(z)f_j(w)=(z-q_i^{c_{ij}}w)f_j(w)f_i(z) \,,
\end{equation}
\begin{equation}\tag{U4}\label{U4}
  (z-q_i^{c_{ij}}w)\psi^\epsilon_i(z)e_j(w)=(q_i^{c_{ij}}z-w)e_j(w)\psi^\epsilon_i(z) \,,
\end{equation}
\begin{equation}\tag{U5}\label{U5}
  (q_i^{c_{ij}}z-w)\psi^\epsilon_i(z)f_j(w)=(z-q_i^{c_{ij}}w)f_j(w)\psi^\epsilon_i(z) \,,
\end{equation}
\begin{equation}\tag{U6}\label{U6}
  [e_i(z),f_j(w)]=\frac{\delta_{ij}}{q_i-q_i^{-1}}\delta\left(\frac{z}{w}\right)\left(\psi^+_i(z)-\psi^-_i(z)\right) \,,
\end{equation}
\begin{equation}\tag{U7}\label{U7}
 \underset{z_1,\ldots,z_{1-c_{ij}}} \Sym\ \sum_{r=0}^{1-c_{ij}}(-1)^r{1-c_{ij}\brack r}_{q_i}
  e_i(z_1)\cdots e_i(z_r) e_j(w) e_i(z_{r+1})\cdots e_i(z_{1-c_{ij}})=0 \,,
\end{equation}
\begin{equation}\tag{U8}\label{U8}
 \underset{z_1,\ldots,z_{1-c_{ij}}} \Sym\ \sum_{r=0}^{1-c_{ij}}(-1)^r{1-c_{ij}\brack r}_{q_i}
  f_i(z_1)\cdots f_i(z_r) f_j(w) f_i(z_{r+1})\cdots f_i(z_{1-c_{ij}})=0 \,,
\end{equation}
where $q_i:=q^{d_i}$, $[a,b]_x:=ab-x\cdot ba$, $[m]_q:=\frac{q^m-q^{-m}}{q-q^{-1}}$,
${m\brack r}_{q}:=\frac{[m-r+1]_q\cdots [m]_q}{[1]_q\cdots [r]_q}$, $\underset{z_1,\ldots,z_{s}} \Sym$
stands for the symmetrization in $z_1,\ldots,z_s$, and the generating series are defined as follows:
\begin{equation}\label{eq:generating series}
  e_i(z):=\sum_{r\in \BZ}{e_{i,r}z^{-r}} \,,\ \
  f_i(z):=\sum_{r\in \BZ}{f_{i,r}z^{-r}} \,,\ \
  \psi_i^{\pm}(z):=\sum_{r\geq -b^\pm_i}{\psi^\pm_{i,\pm r}z^{\mp r}} \,,\ \
  \delta(z):=\sum_{r\in \BZ}{z^r} \,.
\end{equation}

Let $U^{\ssc,<}_{\mu^+,\mu^-},\, U^{\ssc,>}_{\mu^+,\mu^-},\, U^{\ssc,0}_{\mu^+,\mu^-}$ be the $\BC(q)$-subalgebras
of $U^{\ssc}_{\mu^+,\mu^-}$ generated by $\{f_{i,r}\}_{i\in I}^{r\in \BZ}$, $\{e_{i,r}\}_{i\in I}^{r\in \BZ}$,
$\{\psi^\pm_{i,\pm s^\pm_i}, (\psi^\pm_{i,\mp b^\pm_i})^{-1}\}_{i\in I}^{s^\pm_i\geq -b^\pm_i}$,
respectively. The following result is standard:

\begin{Prop}\cite{ft1}\label{Triangular decomposition}
(a) (Triangular decomposition of $U^{\ssc}_{\mu^+,\mu^-}$)
The multiplication map
\begin{equation*}
  \mathsf{m}\colon U^{\ssc,<}_{\mu^+,\mu^-}\otimes U^{\ssc,0}_{\mu^+,\mu^-}\otimes U^{\ssc,>}_{\mu^+,\mu^-}
  \longrightarrow U^{\ssc}_{\mu^+,\mu^-}
\end{equation*}
is an isomorphism of $\BC(q)$-vector spaces.

\medskip
\noindent
(b) The algebras $U^{\ssc,<}_{\mu^+,\mu^-}$, $U^{\ssc,>}_{\mu^+,\mu^-}$, and $U^{\ssc,0}_{\mu^+,\mu^-}$ are isomorphic
to the $\BC(q)$-algebras generated by $\{f_{i,r}\}_{i\in I}^{r\in \BZ}$, $\{e_{i,r}\}_{i\in I}^{r\in \BZ}$, and
$\{\psi^\pm_{i,\pm s^\pm_i}, (\psi^\pm_{i,\mp b^\pm_i})^{-1}\}_{i\in I}^{s^\pm_i\geq -b^\pm_i}$
with the defining relations~(\ref{U3},~\ref{U8}), (\ref{U2},~\ref{U7}), and~(\ref{U1}), respectively.
In particular, $U^{\ssc,<}_{\mu^+,\mu^-},\ U^{\ssc,>}_{\mu^+,\mu^-}$ are independent of $\mu^\pm\in \Lambda$.
\end{Prop}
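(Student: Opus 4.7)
The proposition asserts a PBW-type triangular decomposition together with the absence of hidden relations among the positive, negative, and Cartan generators. Both parts follow a pattern well-known for the unshifted quantum affine algebra $U_q(L\fg)$, adapted to the shifted setting: since the defining relations (U1)--(U8) are formally identical to those of $U_q(L\fg)$ except that the Cartan currents $\psi_i^\pm(z)$ have shifted starting modes, the standard arguments carry over with minor modifications.

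For surjectivity of $\mathsf{m}$ in part~(a), I would take an arbitrary word in the generators $e_{i,r}, f_{i,r}, \psi^\pm_{i,\pm s^\pm_i}, (\psi^\pm_{i,\mp b^\pm_i})^{-1}$ and rewrite it inductively in triangular form $f \cdot \psi \cdot e$. Relations (U4) and (U5) allow one to push every $\psi^\epsilon_i(z)$ past the $e$'s and $f$'s into the middle block, while (U6) lets one commute each $e$ past each $f$ at the cost of a $\psi^+ - \psi^-$ term that is absorbed into the middle. A standard lexicographic induction on, say, the number of $e$-before-$f$ or $\psi$-before-$e$ inversions shows that this rewriting terminates.

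For injectivity, the cleanest approach is to construct a family of Verma-type modules $M_\chi$ indexed by characters $\chi$ of $U^{\ssc,0}_{\mu^+,\mu^-}$, each generated by a cyclic vector $v_\chi$ satisfying $e_{i,r}\cdot v_\chi = 0$ and $\psi_i^\pm(z)\cdot v_\chi = \chi(\psi_i^\pm(z))\, v_\chi$. Using the surjectivity already established, $M_\chi$ is spanned by $f$-monomials applied to $v_\chi$; one then verifies that any nontrivial linear combination of ordered triangular monomials acts nontrivially on some $M_\chi$, by varying $\chi$ over a sufficiently large family of characters. This forces $\mathsf{m}$ to be injective. Part~(b) then follows as a direct corollary: the abstract algebras generated by $\{f_{i,r}\}$, $\{e_{i,r}\}$, and $\{\psi^\pm_{i,\pm s^\pm_i}, (\psi^\pm_{i,\mp b^\pm_i})^{-1}\}$ subject only to the specified relations surject onto $U^{\ssc,<}_{\mu^+,\mu^-}, U^{\ssc,>}_{\mu^+,\mu^-}, U^{\ssc,0}_{\mu^+,\mu^-}$ respectively, and the triangular decomposition of part~(a) forces these surjections to be isomorphisms, since different PBW-type monomials remain linearly independent inside $U^{\ssc}_{\mu^+,\mu^-}$. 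The $\mu^\pm$-independence of $U^{\ssc,<}_{\mu^+,\mu^-}, U^{\ssc,>}_{\mu^+,\mu^-}$ is then immediate, as relations (U2,U7) and (U3,U8) do not involve $\mu^\pm$.

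The main obstacle lies in constructing and controlling the Verma-type modules $M_\chi$ in the shifted setting: because the generating series $\psi_i^\pm(z)$ start at the non-standard modes $\mp b_i^\pm$, one must specify compatible shifted characters and verify that all of (U1)--(U8) are genuinely satisfied on the candidate module, in particular that the inverses $(\psi^\pm_{i,\mp b^\pm_i})^{-1}$ are well-defined on $v_\chi$, i.e., $\chi(\psi^\pm_{i,\mp b^\pm_i}) \ne 0$. Once this representation-theoretic scaffolding is in place, the remaining bookkeeping is routine and mirrors the unshifted case treated in \cite{ft1}.
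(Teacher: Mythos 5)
The paper does not give a proof of this proposition: it is quoted verbatim from~\cite{ft1}, and the note you are reading simply cites that reference. So your proposal is really being measured against the argument in~\cite{ft1} rather than anything present in this paper. That said, your sketch is the standard one and is essentially the strategy carried out there: surjectivity of $\mathsf{m}$ by straightening words using~(\ref{U4}), (\ref{U5}), (\ref{U6}), and injectivity via a faithful family of Verma-type modules.

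One point in your write-up deserves more care, since as stated it risks circularity. You present~(b) as ``a direct corollary'' of~(a), but parts~(a) and~(b) really must be established \emph{simultaneously}: in~(a) the tensor factors are the \emph{internal} subalgebras $U^{\ssc,\lessgtr,0}_{\mu^+,\mu^-}$, and to conclude~(b) one needs to know that the natural surjections from the \emph{abstract} algebras $\widetilde{U}^{\lessgtr}$ (defined by~(\ref{U2},\ref{U7}) resp.~(\ref{U3},\ref{U8})) and $\widetilde{U}^{0}$ (defined by~(\ref{U1})) onto those internal subalgebras are injective. The clean way to do this is to prove the stronger statement that $\widetilde{U}^{<}\otimes \widetilde{U}^{0}\otimes \widetilde{U}^{>}\to U^{\ssc}_{\mu^+,\mu^-}$ is bijective, which yields~(a) and~(b) at once. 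Correspondingly, the Verma module $M_\chi$ should \emph{not} be constructed as a quotient of $U^{\ssc}_{\mu^+,\mu^-}$ (one then cannot see that it is nonzero without already knowing the PBW structure), but rather defined on an explicit underlying vector space, namely $\widetilde{U}^{<}\otimes\BC_\chi$, with a $U^{\ssc}_{\mu^+,\mu^-}$-action declared on generators; the genuinely hard step, which your proposal correctly flags but leaves untouched, is the verification that this action respects all of~(\ref{U1})--(\ref{U8}) in the shifted setting, after which linear independence of ordered monomials follows by letting $\chi$ range over a Zariski-dense family of characters with $\chi(\psi^\pm_{i,\mp b_i^\pm})\ne 0$. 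With that rewording, your outline is correct and matches the cited approach.
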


The algebras $U^{\ssc}_{\mu^+,\mu^-}$ and $U^{\ssc}_{0,\mu^+ + \mu^-}$ are naturally isomorphic for any $\mu^\pm\in \Lambda$,
see~\cite[p.~162]{ft1}.
Therefore, we do not lose generality by considering only $U^{(\unl{b})}_q=U^{\mu}_q:=U^{\ssc}_{0,\mu}$ in the rest of this note.
The quantum loop algebra $U_q(L\fg)$ is isomorphic to $U^{\ssc}_{0,0}/(\psi^+_{i,0}\psi^-_{i,0}-1)_{i\in I}$.


\subsection{GKLO-type homomorphisms}\label{ssec gklo-homomorphisms}
\

Fix an orientation of the graph $\mathrm{Dyn}(\fg)$ obtained from the Dynkin diagram of $\fg$ by replacing all
multiple edges by simple ones. The notation $j-i$ (resp.\ $j\rightarrow i$ or $j \leftarrow i$) is to indicate
an edge (resp.\ oriented edge pointing towards $i$ or $j$) between the vertices $i,j\in \mathrm{Dyn}(\fg)$.
We fix a dominant coweight $\lambda\in \Lambda^+$ and a coweight $\mu\in \Lambda$, such that
$\lambda-\mu=\sum_{i\in I} a_i\alpha_i$ with $a_i\in \BN$. We also fix a sequence
$\unl{\lambda}=(\omega_{i_1},\ldots,\omega_{i_N})$ of fundamental coweights, such that
$\sum_{k=1}^N \omega_{i_k}=\lambda$, as well as a sequence $\unl{\sz}=(\sz_1,\ldots,\sz_N)\in (\BC^\times)^N$.

Consider the associative $\BC(q)$-algebra $\hat{\CA}^q_{\fra}$ generated by
$\{D_{i,r}^{\pm 1}, \sw_{i,r}^{\pm 1/2}\}_{i\in I}^{1\leq r\leq a_i}$ subject to
\begin{equation}\label{eq:DW-comm relations}
  [D_{i,r},D_{j,s}]=[\sw^{1/2}_{i,r},\sw^{1/2}_{j,s}]=0 \,,\
  D_{i,r}^{\pm 1}D_{i,r}^{\mp 1}=\sw_{i,r}^{\pm 1/2}\sw_{i,r}^{\mp 1/2}=1 \,,\
  D_{i,r}\sw^{1/2}_{j,s}=q_i^{\delta_{ij}\delta_{rs}}\sw^{1/2}_{j,s}D_{i,r}
\end{equation}
for all $i,j\in I,\, 1\leq r\leq a_i,\, 1\leq s\leq a_j$. Let $\wt{\CA}^q_{\fra}$ be the localization of
$\hat{\CA}^q_{\fra}$ by the multiplicative set generated by
  $\{\sw_{i,r}-q_i^m\sw_{i,s}\}_{i\in I, m\in \BZ}^{1\leq r\ne s\leq a_i}$,
which obviously satisfies the Ore~conditions. We also define:
\begin{equation}\label{eq:ZW-notations}
  \sZ_i(z):=\prod_{1\leq s\leq N}^{i_s=i} \left(1-\frac{q_i \sz_s}{z}\right) \,,\
  W_i(z):=\prod_{r=1}^{a_i} \left(1-\frac{\sw_{i,r}}{z}\right) \,,\
  W_{i,r}(z):=\prod_{1\leq s\leq a_i}^{s\ne r} \left(1-\frac{\sw_{i,s}}{z}\right) \,.
\end{equation}

The following result has been established in~\cite[Theorem 7.1]{ft1} (in the {\em unshifted case}
$\mu^+=\mu^-=0$, more precisely for $U_q(L\fg)$, this result appeared without a proof in~\cite{gklo}):

\begin{Prop}\cite{ft1}\label{prop:homomorphism}
There exists a unique $\BC(q)$-algebra homomorphism
\begin{equation}\label{eq:GKLO-homom}
  \wt{\Phi}^{\unl\lambda,\unl{\sz}}_\mu\colon U^{\mu}_q \longrightarrow \wt{\CA}^q_\fra
\end{equation}
such that
\begin{equation}\label{eq:GKLO-assignment affine}
\begin{split}
  & e_i(z)\mapsto \frac{-q_i}{1-q_i^2}
    \prod_{t=1}^{a_i}\sw_{i,t} \prod_{j\to i} \prod_{t=1}^{a_j} \sw_{j,t}^{c_{ji}/2}\cdot
    \sum_{r=1}^{a_i} \delta\left(\frac{\sw_{i,r}}{z}\right)\frac{\sZ_i(\sw_{i,r})}{W_{i,r}(\sw_{i,r})}
    \prod_{j\to i} \prod_{p=1}^{-c_{ji}} W_j(q_j^{-c_{ji}-2p}z)D_{i,r}^{-1} \,, \\
  & f_i(z)\mapsto \frac{1}{1-q_i^2}\prod_{j\leftarrow i}\prod_{t=1}^{a_j} \sw_{j,t}^{c_{ji}/2}\cdot
    \sum_{r=1}^{a_i} \delta\left(\frac{q_i^2\sw_{i,r}}{z}\right)\frac{1}{W_{i,r}(\sw_{i,r})}
    \prod_{j\leftarrow i}\prod_{p=1}^{-c_{ji}} W_j(q_j^{-c_{ji}-2p}z)D_{i,r} \,, \\
  & \psi^\pm_i(z)\mapsto \prod_{t=1}^{a_i}\sw_{i,t} \prod_{j - i} \prod_{t=1}^{a_j} \sw_{j,t}^{c_{ji}/2}\cdot
    \left(\frac{\sZ_i(z)}{W_i(z)W_i(q_i^{-2}z)}
    \prod_{j - i} \prod_{p=1}^{-c_{ji}} W_j(q_j^{-c_{ji}-2p}z)\right)^\pm \,. \\
\end{split}
\end{equation}
We write $\gamma(z)^\pm$ for the expansion of a rational function $\gamma(z)$ in $z^{\mp 1}$, respectively.
\end{Prop}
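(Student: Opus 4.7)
The statement has two parts: uniqueness and existence. Uniqueness is immediate, since the prescriptions in~\eqref{eq:GKLO-assignment affine} specify the image of every generator $e_{i,r}$, $f_{i,r}$, $\psi^\pm_{i,\pm s}$ by reading off the coefficient of $z^{-r}$ (or $z^{\mp s}$) in the stated generating-series formulas. The content of the proposition is thus existence, and the plan is to verify that the prescribed images satisfy each of the defining relations~\eqref{U1}--\eqref{U8}.

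The cleanest organization is to exploit the triangular decomposition of Proposition~\ref{Triangular decomposition} and check the relations internal to each subalgebra separately. Relation~\eqref{U1} is trivial, since the image of $\psi_i^\pm(z)$ is a formal expansion of a rational function in the mutually commuting operators $\{\sw_{j,t}\}$. For relations~\eqref{U4} and~\eqref{U5}, once the $D_{j,r}^{\pm 1}$ appearing in the image of $e_j(w)$ or $f_j(w)$ is commuted through the $W$-functions of $\wt\Phi(\psi_i^\epsilon(z))$ using~\eqref{eq:DW-comm relations}, the single resulting shift $\sw_{i,r}\mapsto q_i^{\mp 2}\sw_{i,r}$ produces precisely the linear factor $(z-q_i^{c_{ij}}w)$ (respectively $(q_i^{c_{ij}}z-w)$) needed to match the two sides. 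The $ee$-relation~\eqref{U2} and the $ff$-relation~\eqref{U3} proceed in parallel: after using the identity $\delta(\sw_{i,r}/z)(z-q_i^{c_{ij}}w)=\delta(\sw_{i,r}/z)(\sw_{i,r}-q_i^{c_{ij}}w)$ to turn the $z$'s into $\sw$'s, the rearrangement of $D$'s across the $W$-factors produces the correct scalars.

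The relation~\eqref{U6} requires a more delicate partial-fraction argument: expanding $[e_i(z),f_j(w)]$ as a double sum over $r,s$, one observes that all terms with $(i,r)\ne (j,s)$ cancel between the two orderings because $D_{i,r}^{-1}$ commutes with $D_{j,s}$ off the diagonal, while the diagonal contribution matches $\tfrac{\delta_{ij}}{q_i-q_i^{-1}}\delta(z/w)(\psi_i^+(z)-\psi_i^-(z))$ via the residue decomposition of the rational function $\sZ_i(z)/(W_i(z)W_i(q_i^{-2}z))$ at its simple poles $z=\sw_{i,r}$ and $z=q_i^{-2}\sw_{i,r}$.

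The main obstacle, as usual, is the quantum Serre relations~\eqref{U7} and~\eqref{U8}. After collecting the $\delta(\sw_{i,r_k}/z_k)$-factors and commuting all $D_{i,\bullet}^{-1}$'s to the right, the verification reduces, term by term, to the vanishing after symmetrization of the scalar $q$-binomial sum
\begin{equation*}
\sum_{r=0}^{1-c_{ij}}(-1)^r{1-c_{ij}\brack r}_{q_i}\prod_{p\le r}(q_i^{c_{ij}}x_p-y)\prod_{p>r}(x_p-q_i^{c_{ij}}y),
\end{equation*}
a classical consequence of the $q$-binomial identities. A conceptually cleaner route, available once the results of the present section are in place, is to appeal to Theorem~\ref{thm:shuffle homomorphism}: the restriction of $\wt\Phi_\mu^{\unl\lambda,\unl\sz}$ to $U^{\ssc,>}_{0,\mu}$ factors through the shuffle algebra $\BS^{(\fg)}$ via the embedding $\Upsilon$ of~\eqref{eq:general embed}, so that the Serre relations for the $e_{i,r}$'s get absorbed into the wheel conditions which manifestly hold on the image; the analogous remark for~\eqref{U8} handles the $f$-side. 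For the purposes of this note, however, we simply invoke~\cite[Theorem~7.1]{ft1}, where the direct verification is carried out.
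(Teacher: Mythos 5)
Your proposal ends up in the same place as the paper: the proposition is imported from~\cite[Theorem~7.1]{ft1}, and your observation that the relations (\ref{U2},~\ref{U3},~\ref{U7},~\ref{U8}) can instead be absorbed into Theorem~\ref{thm:shuffle homomorphism}, that (\ref{U1},~\ref{U4},~\ref{U5}) are immediate, and that only~(\ref{U6}) requires the delta-function decomposition of $\gamma(z)^+-\gamma(z)^-$, is exactly the content of Remark~\ref{rem:shuffle simplifies gklo}. So the overall architecture is fine and matches the paper.

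Two points in your sketch are off, though neither is load-bearing since you ultimately defer to the citation. First, the scalar identity you propose for the Serre relations is false as stated: for $c_{ij}=-1$ the plain symmetrization of
$\sum_{r=0}^{2}(-1)^r{2\brack r}_{q_i}\prod_{p\le r}(q_i^{-1}x_p-y)\prod_{p>r}(x_p-q_i^{-1}y)$
equals $q_i^{-3}(q_i^2-1)^2\,y(x_1+x_2)\ne 0$. The correct reduction must keep the $\zeta_{ii}$-factors among the $z$-variables (equivalently, the symmetrization is the twisted one of the shuffle product~\eqref{shuffle product}, and the vanishing is exactly the statement that the Serre element maps to $0$ in $S^{(\fg)}$); dropping those factors destroys the identity. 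Second, in~(\ref{U6}) the off-diagonal terms do not cancel merely because $D_{i,r}^{-1}$ and $D_{j,s}$ commute: for adjacent $i,j$ the $W$- and $\sw^{c/2}$-prefactors of $\wt{\Phi}(e_i(z))$ contain $\sw_{j,s}$ and those of $\wt{\Phi}(f_j(w))$ contain $\sw_{i,r}$, so normal-ordering the two products produces nontrivial multiplicative factors on each side that must be checked to agree. Both gaps are routine to fill, but as written the sketch would not survive being promoted to a proof without the appeal to~\cite{ft1} or to Theorem~\ref{thm:shuffle homomorphism}.
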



\subsection{Shuffle algebra realization of the positive and negative subalgebras}\label{ssec shuffle algebra}
\

According to Proposition~\ref{Triangular decomposition}(b), we have algebra isomorphisms for any $\mu^+,\mu^-\in \Lambda$:
\begin{equation}\label{eq:halves isomorphism}
\begin{split}
  & U^{\mu,>}_{q} \,\iso\, U^>_q(L\fg) \quad \mathrm{given\ by} \quad e_{i,r}\mapsto e_{i,r} \quad \mathrm{for} \quad i\in I , r\in \BZ \,,\\
  & U^{\mu,<}_{q} \,\iso\, U^<_q(L\fg) \quad \mathrm{given\ by} \quad f_{i,r}\mapsto f_{i,r} \quad \mathrm{for} \quad i\in I , r\in \BZ \,.
\end{split}
\end{equation}
We also note the algebra isomorphism
\begin{equation}\label{eq:positive-vs-negative}
  U^<_q(L\fg) \,\iso\, {U^>_q(L\fg)}^{\op} \quad \mathrm{given\ by} \quad f_{i,r}\mapsto e_{i,r} \quad \mathrm{for} \quad i\in I , r\in \BZ \,,
\end{equation}
where for any algebra $A$ we use $A^{\op}$ to denote the algebra with the opposite multiplication.

\medskip
\noindent
Consider an $\BN^I$-graded $\BC(q)$-vector space
$\BS^{(\fg)}\ =\underset{\underline{k}=(k_i)_{i\in I}\in \BN^{I}} \bigoplus\BS^{(\fg)}_{\underline{k}}$,
with the graded components
\begin{equation}\label{pole conditions}
  \BS^{(\fg)}_{\unl{k}} = \left\{
  F=\frac{f(\{x_{i,r}\}_{i\in I}^{1\leq r\leq k_i})}{\prod_{i-j}^{\mathrm{unordered}} \prod_{r\leq k_i}^{s\leq k_{j}}(x_{i,r}-x_{j,s})} \,\Big|\,
  f\in \BC\Big[\{x_{i,r}^{\pm 1}\}_{i\in I}^{1\leq r\leq k_i}\Big]^{S_{\unl{k}}}\right\} \,,
\end{equation}
where $S_{\unl{k}}:=\prod_{i\in I} S(k_i)$ is the product of symmetric groups.
We also fix rational functions:
\begin{equation}\label{eq:shuffle factor}
  \zeta_{ij}\left(\frac{z}{w}\right)=\frac{z-q_i^{-c_{ij}}w}{z-w} \qquad \forall\, i,j\in I \,.
\end{equation}
Let us now introduce the bilinear \emph{shuffle product} $\star$ on $\BS^{(\fg)}$ as follows:
\begin{equation}\label{shuffle product}
\begin{split}
  & F(\ldots,x_{i,1},\ldots,x_{i,k_i},\ldots) \star G(\ldots,x_{i,1},\ldots,x_{i,\ell_i},\ldots) :=
    \frac{1}{\unl{k}!\cdot\unl{\ell}!}\times\\
  & \Sym
    \left(F\left(\{x_{i,r}\}_{i\in I}^{1\leq r\leq k_i}\right) G\left(\{x_{i',r'}\}_{i'\in I}^{k_{i'}<r'\leq k_{i'}+\ell_{i'}}\right)\cdot
    \prod_{i\in I}^{i'\in I} \prod_{r\leq k_i}^{r'>k_{i'}}\zeta_{ii'}\left(\frac{x_{i,r}}{x_{i',r'}}\right)\right) \,.
\end{split}
\end{equation}
Here, $\unl{k}!=\prod_{i\in I} k_i!$, while the \emph{symmetrization} of
$f\in \BC(\{x_{i,1},\ldots,x_{i,m_i}\}_{i\in I})$ is defined via:
\begin{equation*}
  \Sym\,(f) \Big(\{x_{i,1},\ldots,x_{i,m_i}\}_{i\in I}\Big)\ :=
  \sum_{\sigma_i\in S(m_i) \, \forall\, i\in I} f\Big(\{x_{i,\sigma_i(1)},\ldots,x_{i,\sigma_i(m_i)}\}_{i\in I}\Big) \,.
\end{equation*}
This endows $\BS^{(\fg)}$ with a structure of an associative $\BC(q)$-algebra with the unit $\textbf{1}\in \BS^{(\fg)}_{(0,\ldots,0)}$.

\medskip
\noindent
We are interested in an $\BN^I$-graded $\BC(q)$-subspace of $\BS^{(\fg)}$ defined by the \emph{wheel conditions}:
\begin{equation}\label{eq:wheel conditions}
  F\Big(\{x_{i,r}\}\Big)\Big|_
  {(x_{i,1},x_{i,2},x_{i,3}, \dots, x_{i,1-c_{ij}}) \mapsto (w,w q_i^2, w q_i^4, \dots, w q_i^{-2c_{ij}}),\, x_{j,1} \mapsto w q_i^{-c_{ij}}} =\, 0
\end{equation}
for any connected vertices $i-j$ in $\mathrm{Dyn}(\fg)$. Let $S^{(\fg)}\subset \BS^{(\fg)}$ denote the subspace of all such
elements $F$. It is straightforward to check that $S^{(\fg)}\subset\BS^{(\fg)}$ is $\star$-closed. The resulting algebra
$\left( S^{(\fg)},\star \right)$ is called the \emph{(trigonometric Feigin-Odesskii) shuffle algebra of type $\fg$}.

The following result has been recently established in~\cite[Theorem 1.7]{nt}:

\begin{Prop}\cite{nt}\label{thm:shuffle iso}
The assignments $e_{i,r}\mapsto x_{i,1}^r$ and $f_{i,r}\mapsto x_{i,1}^r$ for $i\in I,r\in \BZ$
give rise to $\BC(q)$-algebra isomorphisms:
\begin{equation}\label{eq:Upsilon affine}
  \Upsilon\colon U^>_q(L\fg) \,\iso\, S^{(\fg)}
  \qquad \mathrm{and} \qquad
  \Upsilon\colon U^<_q(L\fg) \,\iso\, S^{(\fg),\op} \,.
\end{equation}
\end{Prop}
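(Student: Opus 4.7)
The plan is to prove the positive-subalgebra statement $\Upsilon\colon U^>_q(L\fg)\iso S^{(\fg)}$ in three steps: well-definedness, injectivity, and surjectivity. The negative-subalgebra statement then follows by composing with the opposite-algebra isomorphism \eqref{eq:positive-vs-negative}.

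For well-definedness, I would verify directly that the images $x_{i,1}^r$ satisfy the loop relations \eqref{U2} and \eqref{U7}. The quadratic relation \eqref{U2} reduces to a rational-function identity between $x_{i,1}^r\star x_{j,1}^s$ and $x_{j,1}^s\star x_{i,1}^r$: the shuffle factor \eqref{eq:shuffle factor} is tuned so that, after clearing the symmetric denominator and using $\sd_i c_{ij}=\sd_j c_{ji}$, the identity collapses to the trivial $q_i^{-c_{ij}}(q_i^{c_{ij}}x-y)=x-q_i^{-c_{ij}}y$. For the Serre relation \eqref{U7}, a standard symmetrization argument shows that the image of the Serre combination in $\BS^{(\fg)}$ vanishes in $S^{(\fg)}$ if and only if the wheel conditions \eqref{eq:wheel conditions} hold, and these are imposed by definition.

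For injectivity, I would invoke the Damiani--Beck PBW theorem for $U^>_q(L\fg)$, obtaining a basis of ordered monomials in the real and imaginary root vectors $e_\beta$ indexed by positive roots of $\hat{\fg}$. Their $\Upsilon$-images can be computed inductively from the coproduct-compatible structure on the shuffle side, and linear independence follows either by comparing leading terms under a grading refinement or by matching against a non-degenerate Hopf-type pairing between $U^>_q(L\fg)$ and $U^<_q(L\fg)$.

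The principal obstacle is surjectivity, which I would prove by induction on $|\unl{k}|=\sum_i k_i$ via a factorization technique. Given a nonzero $F\in S^{(\fg)}_{\unl{k}}$ and $i\in I$ with $k_i>0$, one regards $F$ as a Laurent polynomial in $x_{i,k_i}$ with coefficients in $\BS^{(\fg)}_{\unl{k}'}$, where $\unl{k}'$ is obtained from $\unl{k}$ by decreasing $k_i$ by one. The wheel conditions \eqref{eq:wheel conditions} rigidly constrain the specializations of these coefficients along the hyperplanes $x_{j,s}=q_i^{-c_{ij}}x_{i,k_i}$ for each Dynkin-neighbour $j$, allowing one to peel off a $\star x_{i,1}^r$ factor and reduce to a strictly smaller grading. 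Implementing this bookkeeping uniformly for multiply-laced edges, where the wheel condition involves arbitrarily many collinear $x_{i,s}$-variables, is the technical heart of the argument and precisely dictates the form of \eqref{eq:wheel conditions}.
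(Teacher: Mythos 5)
The paper itself gives no proof of this proposition; it is a verbatim citation of \cite[Theorem~1.7]{nt}, so there is no internal argument to compare against. Judged on its own merits and against what \cite{nt} actually does, your outline has two substantive problems.

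First, the well-definedness step misidentifies the role of the wheel conditions. For $\Upsilon$ to descend to an algebra map out of $U^>_q(L\fg)$ one must check that the $\Upsilon$-image of the Serre combination \eqref{U7} is the zero element of the ambient space $\BS^{(\fg)}$; this is a concrete rational-function identity in the shuffle product and is logically independent of the wheel conditions, which are constraints defining a \emph{subspace} and cannot cause a nonzero element to ``vanish in $S^{(\fg)}$''. The wheel conditions \eqref{eq:wheel conditions} serve a different purpose: an easy direct check shows that every $\star$-product of the monomials $x_{i,1}^r$ satisfies them, so the image of $\Upsilon$ lands in $S^{(\fg)}$; the theorem's actual content is the converse containment.

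Second, and more seriously, the surjectivity argument you sketch is the heart of the theorem, and the ``peel off a variable'' induction does not carry over to arbitrary simple $\fg$. That strategy works for quantum toroidal $\gl_1$ and for type $A$, but the general case was precisely the long-standing open problem that \cite{nt} was written to resolve, and it is resolved there by a different mechanism: a PBW-type theorem built from Lyndon (loop) words, together with a family of specialization maps on the shuffle side whose leading-order behaviour is analysed combinatorially to pin down $S^{(\fg)}$. You flag the multiply-laced bookkeeping as ``the technical heart'' but offer no resolution; as written this step is a genuine gap rather than a compressed sketch. Your injectivity step (Damiani--Beck PBW together with a nondegenerate pairing) is plausible in outline, though again \cite{nt} proceeds differently, matching their loop-word PBW basis to shuffle-side specialization data rather than invoking the Hopf pairing directly.
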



\subsection{Shuffle algebra realization of the GKLO-type homomorphisms}\label{ssec shuffle for GKLO affine}
\

The main new result of this section is the shuffle algebra interpretation of the homomorphisms
$\wt{\Phi}^{\unl\lambda,\unl{\sz}}_\mu$. We note that the type $A$ case of this result is due to~\cite[Theorem 4.11]{ft2}
while its rational counterpart is due to~\cite[Theorem B.17]{ft3}, where they played crucial roles.

To this end, for any $i\in I$ and $1\leq r\leq a_i$, we define:
\begin{equation}\label{eq:Y-factors}
\begin{split}
  & Y_{i,r}(z):=\frac{1}{q_i-q_i^{-1}} \prod_{t=1}^{a_i}\sw_{i,t} \prod_{j\to i} \prod_{t=1}^{a_j} \sw_{j,t}^{c_{ji}/2}\cdot
    \frac{\sZ_i(z)\prod_{j\to i} \prod_{p=1}^{-c_{ji}} W_{j}(z q_j^{-c_{ji}-2p})}{W_{i,r}(z)} \,, \\
  & Y'_{i,r}(z):=\frac{1}{1-q_i^2}\prod_{j\leftarrow i}\prod_{t=1}^{a_j} \sw_{j,t}^{c_{ji}/2}\cdot
    \frac{\prod_{j\leftarrow i}\prod_{p=1}^{-c_{ji}} W_j(zq_j^{-c_{ji}-2p})}{W_{i,r}(zq_i^{-2})} \,.
\end{split}
\end{equation}
Define the $\BC(q)$-algebra $\wt{\CA}^{q,'}_{\fra}$ as the further localization of $\wt{\CA}^{q}_{\fra}$ by the
multiplicative set generated by $\{\sw_{i,r}-q^m\sw_{j,s}\}_{i-j, m\in \BZ}^{r\leq a_i, s\leq a_j}$.
We note that $\wt{\CA}^{q}_{\fra}$ is naturally embedded into $\wt{\CA}^{q,'}_{\fra}$.
Then, we have the following result:

\begin{Thm}\label{thm:shuffle homomorphism}
(a) The assignment
\begin{equation}\label{eq:explicit shuffle homom 1}
\begin{split}
  & \BS^{(\fg)}_{\unl{k}} \ni E\mapsto
    \prod_{i\in I} q_i^{k_i-k_i^2} \,\times \\
  & \sum_{\substack{m^{(i)}_1+\ldots+m^{(i)}_{a_i}=k_i\\ m^{(i)}_r\in \BN \ \forall\, i\in I}}
    \left\{\prod_{i\in I} \prod_{r=1}^{a_i} \prod_{p=1}^{m^{(i)}_r} Y_{i,r}\Big(\sw_{i,r} q_i^{-2(p-1)}\Big)\cdot
    E\left(\Big\{\sw_{i,r} q_i^{-2(p-1)}\Big\}_{i\in I, 1\leq r\leq a_i}^{1\leq p\leq m^{(i)}_r}\right)\times\right.\\
  & \left.
    \prod_{i\in I} \prod_{1\leq r\leq a_i} \prod_{1\leq p_1<p_2\leq m^{(i)}_r}
    \zeta^{-1}_{ii}\Big(\sw_{i,r} q_i^{-2(p_1-1)} \Big/ \sw_{i,r} q_i^{-2(p_2-1)}\Big)\, \times\right.\\
  & \left.
    \prod_{i\in I} \prod_{1\leq r_1\neq r_2\leq a_i} \prod_{1\leq p_1\leq m^{(i)}_{r_1}}^{1\leq p_2\leq m^{(i)}_{r_2}}
    \zeta^{-1}_{ii}\Big(\sw_{i,r_1} q_i^{-2(p_1-1)} \Big/ \sw_{i,r_2} q_i^{-2(p_2-1)}\Big)\, \times\right.\\
  & \left.
    \prod_{j\to i}\prod_{1\leq r_1\leq a_{i}}^{1\leq r_2\leq a_{j}} \prod_{1\leq p_1\leq m^{(i)}_{r_1}}^{1\leq p_2\leq m^{(j)}_{r_2}}
    \zeta^{-1}_{ij}\Big( \sw_{i,r_1} q_i^{-2(p_1-1)} \Big/ \sw_{j,r_2} q_j^{-2(p_2-1)}\Big) \cdot\,
    \prod_{i\in I} \prod_{r=1}^{a_i} D_{i,r}^{-m^{(i)}_r}\right\}
\end{split}
\end{equation}
gives rise to the algebra homomorphism
\begin{equation}\label{eq:Psi-tilde +}
  \widehat{\Phi}^{\unl{\lambda},\unl{\sz}}_{\mu}\colon \BS^{(\fg)}\longrightarrow \wt{\CA}^{q,'}_{\fra} \,.
\end{equation}
Moreover, the composition
\begin{equation}\label{eq:homom extension 1}
  U^{\mu,>}_{q} \, \overset{\eqref{eq:halves isomorphism}}{\iso}\, U^>_q(L\fg)\, \overset{\Upsilon}{\iso}\, S^{(\fg)}
  \overset{\widehat{\Phi}^{\unl{\lambda},\unl{\sz}}_{\mu}}{\longrightarrow} \wt{\CA}^{q,'}_{\fra}
\end{equation}
coincides with the restriction of the homomorphism $\wt{\Phi}^{\unl{\lambda},\unl{\sz}}_{\mu}$
of~\eqref{eq:GKLO-homom} to the subalgebra $U^{\mu,>}_{q}$ of $U^{\mu}_{q}$.
In~particular, the image of $U^{\mu,>}_{q}$ under the composition~\eqref{eq:homom extension 1}
is in the subalgebra $\wt{\CA}^{q}_{\fra}$ of $\wt{\CA}^{q,'}_{\fra}$.

\medskip
\noindent
(b)  The assignment
\begin{equation}\label{eq:explicit shuffle homom 2}
\begin{split}
  & \BS^{(\fg),\op}_{\unl{k}} \ni F\mapsto \\
  & \sum_{\substack{m^{(i)}_1+\ldots+m^{(i)}_{a_i}=k_i\\ m^{(i)}_r\in \BN \ \forall\, i\in I}}
    \left\{\prod_{i\in I} \prod_{r=1}^{a_i} \prod_{p=1}^{m^{(i)}_r} Y'_{i,r}\Big(\sw_{i,r} q_i^{2p}\Big)\cdot
    F\left(\Big\{\sw_{i,r} q_i^{2p}\Big\}_{i\in I,1\leq r\leq a_i}^{1\leq p\leq m^{(i)}_r}\right)\times\right.\\
  & \left.
    \prod_{i\in I} \prod_{1\leq r\leq a_i} \prod_{1\leq p_1<p_2\leq m^{(i)}_r}
    \zeta^{-1}_{ii}\Big(\sw_{i,r} q_i^{2p_2} \Big/ \sw_{i,r} q_i^{2p_1}\Big)\, \times\right.\\
  & \left.
    \prod_{i\in I} \prod_{1\leq r_1\neq r_2\leq a_i} \prod_{1\leq p_1\leq m^{(i)}_{r_1}}^{1\leq p_2\leq m^{(i)}_{r_2}}
    q_i^{-1} \zeta^{-1}_{ii}\Big(\sw_{i,r_2} q_i^{2p_2} \Big/ \sw_{i,r_1} q_i^{2p_1}\Big)\, \times\right.\\
  & \left.
    \prod_{j\leftarrow i}\prod_{1\leq r_1\leq a_{i}}^{1\leq r_2\leq a_{j}} \prod_{1\leq p_1\leq m^{(i)}_{r_1}}^{1\leq p_2\leq m^{(j)}_{r_2}}
    \zeta^{-1}_{ji}\Big( \sw_{j,r_2} q_j^{2p_2} \Big/ \sw_{i,r_1} q_i^{2p_1} \Big) \cdot\,
    \prod_{i\in I} \prod_{r=1}^{a_i} D_{i,r}^{m^{(i)}_r}\right\}
\end{split}
\end{equation}
gives rise to the algebra homomorphism
\begin{equation}\label{eq:Psi-tilde -}
  \widehat{\Phi}^{\unl{\lambda},\unl{\sz}}_{\mu}\colon \BS^{(\fg),\op}\longrightarrow \wt{\CA}^{q,'}_{\fra} \,.
\end{equation}
Moreover, the composition
\begin{equation}\label{eq:homom extension 2}
  U^{\mu,<}_{q} \, \overset{\eqref{eq:halves isomorphism}}{\iso}\, U^<_q(L\fg)\, \overset{\Upsilon}{\iso}\, S^{(\fg),\op}
  \overset{\widehat{\Phi}^{\unl{\lambda},\unl{\sz}}_{\mu}}{\longrightarrow} \wt{\CA}^{q,'}_{\fra}
\end{equation}
coincides with the restriction of the homomorphism $\wt{\Phi}^{\unl{\lambda},\unl{\sz}}_{\mu}$
of~\eqref{eq:GKLO-homom} to the subalgebra $U^{\mu,<}_{q}$ of $U^{\mu}_{q}$.
In~particular, the image of $U^{\mu,<}_{q}$ under the composition~\eqref{eq:homom extension 2}
is in the subalgebra $\wt{\CA}^{q}_{\fra}$ of $\wt{\CA}^{q,'}_{\fra}$.
\end{Thm}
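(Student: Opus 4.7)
The plan is to establish part~(a) in three steps, and then deduce part~(b) by a parallel argument: (i) check well-definedness of the assignment~\eqref{eq:explicit shuffle homom 1}; (ii) check its agreement with $\wt{\Phi}^{\unl{\lambda},\unl{\sz}}_{\mu}$ on the generators $\Upsilon(e_{i,r})=x_{i,1}^r$; and (iii) verify multiplicativity $\widehat{\Phi}(F\star G)=\widehat{\Phi}(F)\widehat{\Phi}(G)$ directly. For~(i), the only potentially dangerous poles in $E(\{\sw_{i,r}q_i^{-2(p-1)}\})$ come from the denominator $\prod_{i-j}^{\mathrm{unord.}}(x_{i,r}-x_{j,s})$ of~\eqref{pole conditions}, and these are cancelled exactly by the numerators of the $\zeta^{-1}_{ij}$-factors in~\eqref{eq:explicit shuffle homom 1} (one per oriented edge $j\to i$, which covers each unordered edge once); all remaining denominators either lie in the Ore set used to define $\wt{\CA}^q_{\fra}$ or are $\BC(q)$-scalars of the form $q_i^m-1$. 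For~(ii), specializing $E$ to $x_{i,1}^r$ (which has $k_i=1$ and $k_j=0$ for $j\ne i$) collapses~\eqref{eq:explicit shuffle homom 1} to $\sum_{r'=1}^{a_i} Y_{i,r'}(\sw_{i,r'})\,\sw_{i,r'}^{\,r}\,D_{i,r'}^{-1}$, since the prefactor $q_i^{k_i-k_i^2}$ equals $1$ and all inner $\zeta^{-1}$-products are empty; this matches the $z^{-r}$-coefficient of $\wt{\Phi}(e_i(z))$ from~\eqref{eq:GKLO-assignment affine} via the identity $(q_i-q_i^{-1})^{-1}=-q_i/(1-q_i^2)$.

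The main --- and most technical --- step is~(iii). I would expand $\widehat{\Phi}(F)\widehat{\Phi}(G)$ as a double sum indexed by splittings $(\alpha^{(i)}_r,\beta^{(i)}_r)$ with $\alpha^{(i)}_r+\beta^{(i)}_r = m^{(i)}_r$, and then commute each $D_{i,r}^{-\alpha^{(i)}_r}$ arising from $\widehat{\Phi}(F)$ past the $\sw$-variables in $\widehat{\Phi}(G)$; this shifts every $\sw_{i,r}$ appearing in the $G$-block by $q_i^{-2\alpha^{(i)}_r}$, so that $F$ is evaluated on arguments $\sw_{i,r}q_i^{-2(p-1)}$ with $1\le p\le \alpha^{(i)}_r$ and $G$ on arguments with $\alpha^{(i)}_r<p\le m^{(i)}_r$. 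On the other hand, $\widehat{\Phi}(F\star G)$ evaluates $F\star G$ on the full range $1\le p\le m^{(i)}_r$; the symmetrization defining the shuffle product~\eqref{shuffle product} produces every assignment of these $p$-indices to $F$ or to $G$, but the $\zeta^{-1}_{ii}$-factors attached to same-$(i,r)$ indices vanish on any interlacing assignment, leaving only the ordered splittings counted with exactly the multiplicity that cancels the combinatorial factor $1/(\unl{k}!\,\unl{\ell}!)$. What then remains is a term-by-term match: the $\zeta_{ii'}$-factors inserted by $F\star G$ must combine with the $\zeta^{-1}_{ii'}$-factors of $\widehat{\Phi}(F)$, $\widehat{\Phi}(G)$, and the new $\zeta^{-1}$-contributions born from the $D$-$\sw$ normal-ordering, so as to reproduce the $\zeta^{-1}$-bookkeeping prescribed in~\eqref{eq:explicit shuffle homom 1}; the $Y_{i,r}$-prefactors distribute multiplicatively (after tracking their $\sw$-content through the same commutations); and the normalization $q_i^{k_i-k_i^2}$ must combine correctly with the $q$-scalars that emerge. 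This final combinatorial matching is where I expect the principal technical difficulty to lie, although each individual reduction is routine.

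For part~(b), the same strategy applies: well-definedness and multiplicativity of~\eqref{eq:explicit shuffle homom 2} follow from the same $D$-commutation argument (now with positive powers of $D$), while the collapse to $F=x_{i,1}^r$ matches the $z^{-r}$-coefficient of $\wt{\Phi}(f_i(z))$, the exponent shift $q_i^{-2(p-1)}\leadsto q_i^{2p}$ reflecting the appearance of $\delta(q_i^2\sw_{i,r}/z)$ (rather than $\delta(\sw_{i,r}/z)$) in~\eqref{eq:GKLO-assignment affine}. Alternatively, part~(b) may be deduced from part~(a) via the anti-isomorphism $U^<_q(L\fg)\iso U^>_q(L\fg)^{\op}$ from~\eqref{eq:positive-vs-negative}, composed with the anti-automorphism of $\wt{\CA}^q_{\fra}$ sending $D_{i,r}\mapsto D_{i,r}^{-1}$, $\sw_{i,r}\mapsto\sw_{i,r}$, which interchanges the edge products over $j\to i$ and $j\leftarrow i$ and converts the $q_i^{-2(p-1)}$-shift in~\eqref{eq:explicit shuffle homom 1} into the $q_i^{2p}$-shift in~\eqref{eq:explicit shuffle homom 2}.
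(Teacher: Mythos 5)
Your proposal follows essentially the same route as the paper, which simply asserts that multiplicativity is a ``tedious straightforward verification'' and that the images of the generators $e_{i,r}$ coincide; your expansion of that verification (the $D$-$\sw$ normal-ordering producing ordered splittings $\alpha^{(i)}_r+\beta^{(i)}_r=m^{(i)}_r$, the vanishing on interlacing assignments within each geometric ladder $\sw_{i,r},\sw_{i,r}q_i^{-2},\dots$, and the multiplicity exactly cancelling $1/(\unl{k}!\,\unl{\ell}!)$) is the correct mechanism, as is the collapse to $\sum_{r'}Y_{i,r'}(\sw_{i,r'})\sw_{i,r'}^{r}D_{i,r'}^{-1}$ on generators. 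One small imprecision: it is the $\zeta_{ii}$-factors inserted by the shuffle product \eqref{shuffle product} (evaluated at $x_F/x_G=q_i^{-2}$) that vanish on interlacing assignments, not the $\zeta^{-1}_{ii}$-prefactors of \eqref{eq:explicit shuffle homom 1}, which are evaluated at $q_i^{2(p_2-p_1)}$ and are regular and nonzero there.
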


\begin{proof}
(a) Let us denote the right-hand side of~\eqref{eq:explicit shuffle homom 1} by
$\widehat{\Phi}^{\unl{\lambda},\unl{\sz}}_{\mu}(E)$. A tedious straightforward verification proves
  $\widehat{\Phi}^{\unl{\lambda},\unl{\sz}}_{\mu}(E\star E')=
   \widehat{\Phi}^{\unl{\lambda},\unl{\sz}}_{\mu}(E)\widehat{\Phi}^{\unl{\lambda},\unl{\sz}}_{\mu}(E')$
for any $E\in \BS^{(\fg)}_{\unl{k}}, E'\in \BS^{(\fg)}_{\unl{\ell}}$ with arbitrary $\unl{k},\unl{\ell}\in \BN^I$.
Thus, $\widehat{\Phi}^{\unl{\lambda},\unl{\sz}}_{\mu}\colon \BS^{(\fg)} \to \wt{\CA}^{q,'}_{\fra}$ is a
$\BC(q)$-algebra homomorphism, and clearly the images of $\{e_{i,r}\}_{i\in I}^{r\in \BZ}$ under~\eqref{eq:homom extension 1}
and $\wt{\Phi}^{\unl{\lambda},\unl{\sz}}_\mu$ do coincide. This completes our proof of Theorem~\ref{thm:shuffle homomorphism}(a).

(b) The proof of Theorem~\ref{thm:shuffle homomorphism}(b) is completely analogous.
\end{proof}

\begin{Rem}\label{rem:shuffle simplifies gklo}
We note that Theorem~\ref{thm:shuffle homomorphism} can actually be used to simplify our proof of Proposition~\ref{prop:homomorphism}.
Indeed, it immediately implies the compatibility of the assignment $\wt{\Phi}^{\unl{\lambda},\unl{\sz}}_\mu$ with the defining
relations (\ref{U2},~\ref{U3},~\ref{U7},~\ref{U8}), while the compatibility with~(\ref{U1},~\ref{U4},~\ref{U5})
is easily checked. Thus, it remains only to prove the compatibility with~(\ref{U6}), which is verified by expressing
$\gamma(z)^+ - \gamma(z)^-$ as a sum of delta-functions in a standard way, see~\cite[Lemma~C.1,~\S C(vi)]{ft1}.
\end{Rem}

\begin{Rem}\label{rem:relation to Feigin-Odesskii}
The construction~\eqref{eq:explicit shuffle homom 1} is reminiscent of that from~\cite[Proposition 2]{fo}
in the elliptic setting. To this end, we consider the $\BC(q)$-algebra $\wt{\CB}^{(\unl{a}),q}_\fra$ generated
by $\{\sw_{i,r}^{\pm 1},\sE_{i,r}\}_{i\in I}^{1\leq r\leq a_i}$, being further localized by the multiplicative set
generated by $\{\sw_{i,r}-q^{mc_{ij}} \sw_{j,s}\}_{(i,r)\ne (j,s)}^{c_{ij}\ne 0, m\in \BZ}$, with:
\begin{equation}\label{eq:FO-algebra}
  \sw_{i,r}\sw_{j,s}=\sw_{j,s}\sw_{i,r} \,,\quad
  \sE_{i,r}\sw_{j,s}=q_i^{-2\delta_{ij}\delta_{rs}}\sw_{j,s}\sE_{i,r} \,,\quad
  \sE_{i,r}\sE_{j,s}=\frac{q_i^{c_{ij}}\sw_{i,r}-\sw_{j,s}}{\sw_{i,r}-q_i^{c_{ij}}\sw_{j,s}}\sE_{j,s}\sE_{i,r} \,.
\end{equation}
This algebra is equipped with the following homomorphism to the algebra $\wt{\CA}^{q,'}_{\fra}$:
\begin{equation}\label{eq:FO-to-ours algebra}
  \varsigma\colon \wt{\CB}^{(\unl{a}),q}_{\fra}\longrightarrow \wt{\CA}^{q,'}_{\fra} \quad \mathrm{given\ by} \quad
  \sw_{i,r}\mapsto \sw_{i,r} \,,\ \sE_{i,r}\mapsto Y_{i,r}(\sw_{i,r})D_{i,r}^{-1} \,.
\end{equation}
Then:

(a) The restriction of the algebra homomorphism $\wt{\Phi}^{\unl{\lambda},\unl{\sz}}_\mu$ to the positive subalgebra
$U^{\mu,>}_{q}$, identified with $U^>_q(L\fg)$ via~\eqref{eq:halves isomorphism}, can be interpreted as a composition of
$\varsigma$ from~\eqref{eq:FO-to-ours algebra} and
\begin{equation}\label{eq:homom to FO-algebra}
  \overline{\Phi}_{\unl{a}}\colon U^>_q(L\fg) \longrightarrow \wt{\CB}^{(\unl{a}),q}_\fra \quad \mathrm{given\ by} \quad
  e_i(z)\mapsto \sum_{r=1}^{a_i} \delta\left(\frac{\sw_{i,r}}{z}\right)\cdot \sE_{i,r} \,.
\end{equation}

(b) The homomorphisms $\overline{\Phi}_{\unl{a}}$ of~\eqref{eq:homom to FO-algebra} can be obtained from their simplest
counterparts with $\unl{a}=(0,\ldots,0,1,\ldots,0)$ via the ``twisted tensor product''. To this end, for
$\unl{a}^{(1)},\unl{a}^{(2)}\in \BN^I$ set $\unl{a}^{(12)}:=\unl{a}^{(1)}+\unl{a}^{(2)}$, and consider the corresponding
algebras $\wt{\CB}^{(1),q}_\fra, \wt{\CB}^{(2),q}_\fra, \wt{\CB}^{(12),q}_\fra$. Let $U^{\geq}_q(L\fg)$ be the subalgebra
generated by $\{e_{i,r},\psi^-_{i,-k}\}_{i\in I}^{r\in \BZ,k\in \BN}$. It is endowed with the formal coproduct:
\begin{equation}\label{eq:Dr-coproduct}
  \Delta\colon
  e_i(z)\mapsto e_i(z)\otimes 1 + \psi^-_i(z)\otimes e_i(z) \,,\quad
  \psi^-_i(z)\mapsto \psi^-_i(z)\otimes \psi^-_i(z) \,.
\end{equation}
Following~\eqref{eq:GKLO-assignment affine}, let us extend the algebra homomorphism~\eqref{eq:homom to FO-algebra} to
  $\overline{\Phi}_{\unl{a}}\colon U^{\geq}_q(L\fg) \to \wt{\CB}^{(\unl{a}),q}_\fra$.
We also consider the algebra embedding
  $\imath\colon \wt{\CB}^{(12),q}_\fra \hookrightarrow \wt{\CB}^{(1),q}_\fra\otimes \wt{\CB}^{(2),q}_\fra$
determined by
\begin{equation}
  \sw_{i,r}\mapsto
  \begin{cases}
    \sw^{(1)}_{i,r} & \mathrm{if}\ r\leq a^{(1)}_i \\
    \sw^{(2)}_{i,r-a^{(1)}_i} & \mathrm{if}\ r>a^{(1)}_i
  \end{cases} \,,\quad
  \sE_{i,r}\mapsto
  \begin{cases}
    \sE^{(1)}_{i,r} & \mathrm{if}\ r\leq a^{(1)}_i \\
    \overline{\Phi}_{\unl{a}^{(1)}}(\psi^-_i(\sw^{(2)}_{i,r-a^{(1)}_i})) \sE^{(2)}_{i,r-a^{(1)}_i} & \mathrm{if}\ r>a^{(1)}_i
  \end{cases} \,.
\end{equation}
Then, $\overline{\Phi}_{\unl{a}^{(1)}+\unl{a}^{(2)}}\colon U^{\geq}_q(L\fg) \to \wt{\CB}^{(12),q}_\fra$ factors
through the composition $(\overline{\Phi}_{\unl{a}^{(1)}} \otimes \overline{\Phi}_{\unl{a}^{(2)}})\circ\Delta$, that~is:
\begin{equation*}
  \imath\circ \overline{\Phi}_{\unl{a}^{(1)}+\unl{a}^{(2)}} =
  (\overline{\Phi}_{\unl{a}^{(1)}} \otimes \overline{\Phi}_{\unl{a}^{(2)}})\circ \Delta \,.
\end{equation*}
\end{Rem}


\subsection{Special difference operators}\label{ssec special diff operators}
\

For any $\unl{k}\in \BN^I$ and any multisymmetric Laurent polynomial
$g\in \BC(q)\left[\{x^{\pm 1}_{i,r}\}_{i\in I}^{1\leq r\leq k_i}\right]^{S_{\unl{k}}}$,
consider the following shuffle element $\wt{E}_{\unl{k}}(g)\in S^{(\fg)}_{\unl{k}}$:
\begin{equation}\label{eq:important E-elements}
  \wt{E}_{\unl{k}}(g):=\prod_{i\in I} \left\{q_i^{k_i^2-k_i}(q_i-q_i^{-1})^{k_i}\right\} \,\times
  \frac{\prod_{i\in I}\prod_{1\leq r\ne s\leq k_i} (x_{i,r}-q_i^{-2}x_{i,s})\cdot g\left(\{x_{i,r}\}_{i\in I}^{1\leq r\leq k_i}\right)}
       {\prod_{i\to j} \prod_{r\leq k_i}^{s\leq k_j} (x_{j,s}-x_{i,r})} \,.
\end{equation}
These elements obviously satisfy the wheel conditions~\eqref{eq:wheel conditions}, due to the presence of
the factor $\prod_{i\in I}\prod_{1\leq r\ne s\leq k_i} (x_{i,r}-q_i^{-2}x_{i,s})$, and thus can be written as
$\wt{E}_{\unl{k}}(g)=\Upsilon(\wt{e}_{\unl{k}}(g))$ for unique $\wt{e}_{\unl{k}}(g)\in U^{\mu,>}_{q}\simeq U_q^>(L\fg)$
by Proposition~\ref{thm:shuffle iso}, so that
  $\widehat{\Phi}^{\unl{\lambda},\unl{\sz}}_{\mu}(\wt{E}_{\unl{k}}(g))=
   \wt{\Phi}^{\unl{\lambda},\unl{\sz}}_{\mu}(\wt{e}_{\unl{k}}(g))$
by Theorem~\ref{thm:shuffle homomorphism}(a).
We also consider $\wt{F}_{\unl{k}}(g)\in S^{(\fg),\op}_{\unl{k}}$ defined via:
\begin{equation}\label{eq:important F-elements}
  \wt{F}_{\unl{k}}(g):=\prod_{i\in I} \left\{q_i^{k_i-k_i^2}(1-q_i^{2})^{k_i}\right\} \,\times
  \frac{\prod_{i\in I}\prod_{1\leq r\ne s\leq k_i} (x_{i,r}-q_i^{-2}x_{i,s})\cdot g\left(\{x_{i,r}\}_{i\in I}^{1\leq r\leq k_i}\right)}
       {\prod_{i\to j} \prod_{r\leq k_i}^{s\leq k_j} (x_{i,r}-x_{j,s})} \,.
\end{equation}
The following result generalizes its type $A$ case established in~\cite[Proposition~4.12]{ft2}:

\begin{Lem}\label{lem:image of important elements}
(a) For $\wt{E}_{\unl{k}}(g)\in S^{(\fg)}_{\unl{k}}$ given by~(\ref{eq:important E-elements}), we have:
\begin{equation}\label{eq:image of important E}
\begin{split}
  & \widehat{\Phi}^{\unl{\lambda},\unl{\sz}}_{\mu}(\wt{E}_{\unl{k}}(g))=
    \prod_{i\in I} \Big(\prod_{t=1}^{a_i} \sw_{i,t}\Big)^{k_i+\sum_{j \leftarrow i} \frac{c_{ij}}{2}k_j} \,\times \\
  & \sum_{\substack{J_i\subset\{1,\ldots,a_i\}\\|J_i|=k_i \ \forall\, i\in I}}
    \left(\frac{\prod_{j\to i} \prod_{r\in J_i}^{1\leq s\leq a_j} \prod_{p=1}^{-c_{ji}-\delta_{s\in J_j}} \left(1-\frac{q_j^{c_{ji}+2p}\sw_{j,s}}{\sw_{i,r}}\right)}
               {\prod_{i\in I} \prod_{r\in J_i}^{s\notin J_i} \left(1-\frac{\sw_{i,s}}{\sw_{i,r}}\right)}
    \cdot g\left(\{\sw_{i,r}\}_{i\in I}^{r\in J_i}\right) \, \times \right.\\
  & \left.
    \prod_{i\in I} \prod_{r\in J_i} \sZ_i(\sw_{i,r})\cdot
    \prod_{i\in I} \Big(\prod_{r\in J_i} \sw_{i,r}\Big)^{k_i-1-\sum_{j\to i}k_j}\cdot
    \prod_{i\in I} \prod_{r\in J_i} D_{i,r}^{-1}\right) \,.
\end{split}
\end{equation}

\noindent
(b) For $\wt{F}_{\unl{k}}(g)\in S^{(\fg),\op}_{\unl{k}}$ given by~(\ref{eq:important F-elements}), we have:
\begin{equation}\label{eq:image of important F}
\begin{split}
  & \widehat{\Phi}^{\unl{\lambda},\unl{\sz}}_{\mu}(\wt{F}_{\unl{k}}(g))=
    \prod_{i\in I} \Big(\prod_{t=1}^{a_i} \sw_{i,t}\Big)^{\sum_{j\to i} \frac{c_{ij}}{2}k_j} \,\times \\
  & \sum_{\substack{J_i\subset\{1,\ldots,a_i\}\\|J_i|=k_i \ \forall\, i\in I}}
    \left(\frac{\prod_{j\leftarrow i} \prod_{r\in J_i}^{1\leq s\leq a_j}
                \prod_{p=1+\delta_{s\in J_j}}^{-c_{ji}} \left(1-\frac{q_j^{c_{ji}+2p}q_i^{-2}\sw_{j,s}}{\sw_{i,r}}\right)}
               {\prod_{i\in I} \prod_{r\in J_i}^{s\notin J_i} \left(1-\frac{\sw_{i,s}}{\sw_{i,r}}\right)}
    \cdot g\left(\{q_i^2\sw_{i,r}\}_{i\in I}^{r\in J_i}\right) \,\times \right.\\
  & \left.
    \prod_{i\in I} \Big(\prod_{r\in J_i} \sw_{i,r}\Big)^{k_i-1-\sum_{j\leftarrow i}k_j}\cdot
    \prod_{i\in I} q_i^{\sum_{j\leftarrow i} (c_{ij}-2)k_ik_j} \cdot
    \prod_{i\in I} \prod_{r\in J_i} D_{i,r}\right) \,.
\end{split}
\end{equation}
\end{Lem}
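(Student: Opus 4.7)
The plan is to substitute the explicit form of $\wt{E}_{\unl{k}}(g)$ into the shuffle-to-difference formula \eqref{eq:explicit shuffle homom 1}, then exploit the numerator factor $\prod_{i\in I}\prod_{r\ne s}(x_{i,r}-q_i^{-2}x_{i,s})$ to drastically collapse the resulting sum. The key observation is that when we evaluate the variables of color $i$ at the multiset $\{\sw_{i,r}q_i^{-2(p-1)}\}_{r\le a_i}^{1\le p\le m^{(i)}_r}$, any two evaluation points that share the same index $r$ and satisfy $p_2=p_1-1$ for some pair would make the factor $(x_{i,r_2}-q_i^{-2}x_{i,r_1})$ vanish. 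Hence, only those multiindices $(m^{(i)}_r)$ with $m^{(i)}_r\in\{0,1\}$ contribute, and such configurations are parametrized by subsets $J_i\subset\{1,\ldots,a_i\}$ with $|J_i|=k_i$, letting $m^{(i)}_r=\delta_{r\in J_i}$. This is precisely the sum appearing on the right-hand side of \eqref{eq:image of important E}.

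Having reduced to subsets $J_i$, I would compute each factor in \eqref{eq:explicit shuffle homom 1} evaluated at $\{x_{i,r}\mapsto \sw_{i,r}\}_{r\in J_i}$. The $Y_{i,r}(\sw_{i,r})$ factors (taken once per $r\in J_i$ since all $m^{(i)}_r\le 1$) produce the prefactor $(q_i-q_i^{-1})^{-k_i}\prod_i(\prod_t\sw_{i,t})^{k_i+\sum_{j\leftarrow i}\frac{c_{ij}}{2}k_j}$ together with $\prod_{r\in J_i}\sZ_i(\sw_{i,r})$ and the ``$\sZ$-side'' of $W_j$ factors. The numerator of $\wt{E}_{\unl{k}}(g)$ contributes $g(\{\sw_{i,r}\}_{r\in J_i})$ and a specific product of linear terms among the chosen $\sw$'s. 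Finally, the two $\zeta^{-1}_{ii}$ products and the $\zeta^{-1}_{ij}$ product cancel against $W_{i,r}(\sw_{i,r})$ in the denominator of $Y_{i,r}$ and against the $(x_{j,s}-x_{i,r})$ denominator of $\wt{E}_{\unl{k}}(g)$, leaving the ratio $\prod_{j\to i}\prod_{r\in J_i}\prod_{s}\prod_{p=1}^{-c_{ji}-\delta_{s\in J_j}}(1-q_j^{c_{ji}+2p}\sw_{j,s}/\sw_{i,r})\big/\prod_i\prod_{r\in J_i}\prod_{s\notin J_i}(1-\sw_{i,s}/\sw_{i,r})$ seen in \eqref{eq:image of important E}, where the shift $\delta_{s\in J_j}$ in the upper limit records the single cancellation that occurs whenever $s\in J_j$.

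The main obstacle is the bookkeeping of the scalar prefactors, in particular collecting all monomial powers of $\sw_{i,r}$ and $q_i$. The normalization constant $\prod_i q_i^{k_i^2-k_i}(q_i-q_i^{-1})^{k_i}$ in \eqref{eq:important E-elements} is chosen precisely to cancel the $\prod_i q_i^{k_i-k_i^2}$ from \eqref{eq:explicit shuffle homom 1} together with the $\prod_i(q_i-q_i^{-1})^{-k_i}$ from the $Y$-factors; the remaining $\sw$-monomial power $\prod_i(\prod_{r\in J_i}\sw_{i,r})^{k_i-1-\sum_{j\to i}k_j}$ arises from combining the contribution of the $(x_{j,s}-x_{i,r})$ denominator (after extracting leading $\sw$ terms) with those of the $\zeta^{-1}$ and $W$ ratios. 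The cleanest way to organize this is to track separately, for each color $i$, the total $\sw$-degree contribution in the numerator of $\wt{E}_{\unl{k}}(g)$, in $Y_{i,r}$, and in the $\zeta^{-1}$ products; each is a homogeneous function, so the count reduces to a straightforward combinatorial identity.

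Part (b) is proved by an entirely parallel computation based on \eqref{eq:explicit shuffle homom 2} and the definition of $\wt{F}_{\unl{k}}(g)$. The same argument shows that only multiindices with $m^{(i)}_r\in\{0,1\}$ survive, so the sum again collapses to one over subsets $J_i$. The shift $g(\{q_i^2\sw_{i,r}\}_{r\in J_i})$ and the change of the index bound to $p=1+\delta_{s\in J_j},\ldots,-c_{ji}$ both reflect the fact that in \eqref{eq:explicit shuffle homom 2} the evaluation uses $\sw_{i,r}q_i^{2p}$ (rather than $\sw_{i,r}q_i^{-2(p-1)}$) and that one applies $Y'_{i,r}$ instead of $Y_{i,r}$, along with the reversed orientation $j\leftarrow i$ in the definition of $Y'_{i,r}$.
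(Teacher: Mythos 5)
Your proposal is correct and follows essentially the same route as the paper: substitute into~\eqref{eq:explicit shuffle homom 1} (resp.~\eqref{eq:explicit shuffle homom 2}), observe that the factor $\prod_{i\in I}\prod_{r\ne s}(x_{i,r}-q_i^{-2}x_{i,s})$ forces every $m^{(i)}_r\le 1$ so the sum collapses to subsets $J_i$ of cardinality $k_i$, and then collect the scalar and monomial prefactors. The paper records exactly this vanishing argument and leaves the remaining bookkeeping as a straightforward computation, which matches the level of detail in your write-up.
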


\begin{proof}
The proof is straightforward and is based on~(\ref{eq:explicit shuffle homom 1},~\ref{eq:explicit shuffle homom 2}).
Due to the presence of the factors $\prod_{i\in I}\prod_{1\leq r\ne s\leq k_i} (x_{i,r}-q_i^{-2}x_{i,s})$, the summands
of~(\ref{eq:explicit shuffle homom 1},~\ref{eq:explicit shuffle homom 2}) with at least one $m^{(i)}_r>1$ do vanish.
This explains why the summations over all partitions of $k_i$ into $a_i$ nonnegative terms
in~(\ref{eq:explicit shuffle homom 1},~\ref{eq:explicit shuffle homom 2}) are replaced by the summations over all
cardinality $k_i$ subsets of $\{1,\ldots,a_i\}$~in~(\ref{eq:image of important E},~\ref{eq:image of important F}).
\end{proof}

\begin{Cor}
If $k_i>a_i$ for some $i\in I$, then
  $\widehat{\Phi}^{\unl{\lambda},\unl{\sz}}_{\mu}(\wt{E}_{\unl{k}}(g))=0=
   \widehat{\Phi}^{\unl{\lambda},\unl{\sz}}_{\mu}(\wt{F}_{\unl{k}}(g))$
for all $g$.
\end{Cor}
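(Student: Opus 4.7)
The plan is to read off the corollary directly from the explicit formulas~(\ref{eq:image of important E}) and~(\ref{eq:image of important F}) of Lemma~\ref{lem:image of important elements}. In both formulas, after the simplifications already carried out in the lemma, the image $\widehat{\Phi}^{\unl{\lambda},\unl{\sz}}_{\mu}(\wt{E}_{\unl{k}}(g))$ (resp.\ $\widehat{\Phi}^{\unl{\lambda},\unl{\sz}}_{\mu}(\wt{F}_{\unl{k}}(g))$) is expressed as a sum indexed by tuples $(J_i)_{i\in I}$ of subsets $J_i \subset \{1,\ldots,a_i\}$ subject to $|J_i|=k_i$ for all $i\in I$.

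Consequently, if $k_i>a_i$ for some $i\in I$, the indexing set is empty (no subset of an $a_i$-element set has cardinality exceeding $a_i$), so the sum is identically zero regardless of the multisymmetric Laurent polynomial $g$. This gives both vanishing statements simultaneously.

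There is essentially no obstacle here: the whole content was already extracted in Lemma~\ref{lem:image of important elements}, where the presence of the factors $\prod_{i\in I}\prod_{1\leq r\neq s\leq k_i}(x_{i,r}-q_i^{-2}x_{i,s})$ in~(\ref{eq:important E-elements},~\ref{eq:important F-elements}) forced the collapse of the full sum over partitions $m^{(i)}_1+\cdots+m^{(i)}_{a_i}=k_i$ (coming from~(\ref{eq:explicit shuffle homom 1},~\ref{eq:explicit shuffle homom 2})) to the sum over tuples of subsets of cardinality $k_i$. Thus the corollary is simply the observation that an empty indexing set produces the zero operator.
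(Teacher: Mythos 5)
Your proposal is correct and is exactly the argument the paper intends: the Corollary is stated as an immediate consequence of Lemma~\ref{lem:image of important elements}, whose formulas~(\ref{eq:image of important E},~\ref{eq:image of important F}) are sums over subsets $J_i\subset\{1,\ldots,a_i\}$ with $|J_i|=k_i$, and this indexing set is empty when $k_i>a_i$.
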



\section{Generalization to the quantum toroidal $\gl_1$}\label{sec Jordan}

The above constructions admit natural generalizations to the case of shifted version of the quantum
toroidal algebra $\ddot{U}_{q_1,q_2,q_3}(\gl_1)$, related (e.g.\ via~\cite{bfnb}) to the Jordan quiver.
We shall state the key results, skipping the proofs when they are similar to those from Chapter~\ref{sec affine}.


\subsection{Shifted quantum toroidal $\gl_1$}\label{ssec shifted toroidal-1}
\

Fix $q_1,q_2,q_3\in \BC^\times$ that are not roots of unity and satisfy $q_1q_2q_3=1$. For $b^+,b^-\in \BZ$,
we define the \emph{shifted quantum toroidal algebra of $\gl_1$}, denoted by $\ddot{U}^{(b^+,b^-)}_{q_1,q_2,q_3}$,
to be the associative $\BC$-algebra generated by
  $\{e_r,f_r,\psi^\pm_{\pm s^\pm},(\psi^\pm_{\mp b^{\pm}})^{-1}\}_{r\in \BZ}^{s^\pm\geq -b^\pm}$
with the following defining relations:
\begin{equation}\tag{t1}\label{t1}
  [\psi^\epsilon(z),\psi^{\epsilon'}(w)]=0 \,,\quad
  \psi^\pm_{\mp b^\pm}\cdot (\psi^\pm_{\mp b^\pm})^{-1}=
  (\psi^\pm_{\mp b^\pm})^{-1}\cdot \psi^\pm_{\mp b^\pm}=1 \,,
\end{equation}
\begin{equation}\tag{t2}\label{t2}
  (z-q_1w)(z-q_2w)(z-q_3w)e(z)e(w)=(q_1z-w)(q_2z-w)(q_3z-w)e(w)e(z) \,,
\end{equation}
\begin{equation}\tag{t3}\label{t3}
  (q_1z-w)(q_2z-w)(q_3z-w)f(z)f(w)=(z-q_1w)(z-q_2w)(z-q_3w)f(w)f(z) \,,
\end{equation}
\begin{equation}\tag{t4}\label{t4}
  (z-q_1w)(z-q_2w)(z-q_3w)\psi^\epsilon(z)e(w)=(q_1z-w)(q_2z-w)(q_3z-w)e(w)\psi^\epsilon(z) \,,
\end{equation}
\begin{equation}\tag{t5}\label{t5}
  (q_1z-w)(q_2z-w)(q_3z-w)\psi^\epsilon(z)f(w)=(z-q_1w)(z-q_2w)(z-q_3w)f(w)\psi^\epsilon(z) \,,
\end{equation}
\begin{equation}\tag{t6}\label{t6}
  [e(z),f(w)]=\frac{1}{\beta_1}\delta\left(\frac{z}{w}\right) \left(\psi^+(z)-\psi^-(z)\right) \,,
\end{equation}
\begin{equation}\tag{t7}\label{t7}
  \underset{z_1,z_2,z_3}\Sym\ \frac{z_2}{z_3} \left[e(z_1),[e(z_2),e(z_3)]\right]=0 \,,
\end{equation}
\begin{equation}\tag{t8}\label{t8}
  \underset{z_1,z_2,z_3}\Sym\ \frac{z_2}{z_3} \left[f(z_1),[f(z_2),f(z_3)]\right]=0 \,,
\end{equation}
where $\epsilon,\epsilon'\in \{\pm\}$, $\beta_1=(1-q_1)(1-q_2)(1-q_3)$, and the generating series are defined via:
\begin{equation*}
  e(z):=\sum_{r\in \BZ} e_{r}z^{-r} \,,\quad
  f(z):=\sum_{r\in \BZ} f_{r}z^{-r} \,,\quad
  \psi^{\pm}(z):=\sum_{r\geq -b^\pm} \psi^{\pm}_{\pm r}z^{\mp r} \,.
\end{equation*}

\begin{Rem}\label{rem:6-symmetry}
(a) The original quantum toroidal algebra of $\gl_1$, denoted by $\ddot{U}_{q_1,q_2,q_3}(\gl_1)$,
is isomorphic to $\ddot{U}^{(0,0)}_{q_1,q_2,q_3}/(\psi^+_0\psi^-_0-1)$.

\medskip
\noindent
(b) We note the $S(3)$-symmetry of $\ddot{U}^{(b^+,b^-)}_{q_1,q_2,q_3}$ with respect to the permutations of $q_1,q_2,q_3$.
\end{Rem}

The algebras $\ddot{U}^{(b^+,b^-)}_{q_1,q_2,q_3}$ and $\ddot{U}^{(0,b^+ + b^-)}_{q_1,q_2,q_3}$ are naturally isomorphic for any $b^\pm$.
Hence, we do not lose generality by considering only $\ddot{U}^{(0,b)}_{q_1,q_2,q_3}$, which will be denoted by $\ddot{U}^{(b)}_{q_1,q_2,q_3}$
for simplicity.


\subsection{GKLO-type homomorphisms}\label{ssec gklo-homomorphisms Jordan}
\

Fix a pair of integers: $a\geq 1$ and $N\geq 0$ (following~\cite[\S A(iii)]{bfnb}, one can interpret them as
$a=\dim(V)$ and $N=\dim(W)$ in the Jordan quiver).
Let $\hat{\CA}^{q_1}$ be the associative $\BC$-algebra generated $\{D_{r}^{\pm 1}, \sw_{r}^{\pm 1}\}_{1\leq r\leq a}$
with the only nontrivial commutator $D_r \sw_s=q_1^{\delta_{rs}}\sw_s D_r$, and let $\wt{\CA}^{q_1}$ be the localization
of $\hat{\CA}^{q_1}$ by the multiplicative set generated by $\{\sw_{r}-q_1^m \sw_{s}\}_{1\leq r\ne s\leq a}^{m\in \BZ}$.
We also choose a sequence $\unl{\sz}=(\sz_1,\ldots,\sz_N)\in (\BC^\times)^N$ and define
$\sZ(z):=\prod_{k=1}^N \Big(1-\frac{\sz_k}{z}\Big)$.

Then, we have the following analogue of Proposition~\ref{prop:homomorphism}:

\begin{Prop}\label{prop:homomorphism Jordan}
There exists a unique $\BC$-algebra homomorphism
\begin{equation}\label{eq:GKLO-homom Jordan}
  \wt{\Phi}^{\unl{\sz}}_{a}\colon \ddot{U}^{(N)}_{q_1,q_2,q_3} \longrightarrow \wt{\CA}^{q_1}
\end{equation}
such that
\begin{equation}\label{eq:GKLO-assignment toroidal-1}
\begin{split}
  & e(z)\mapsto \frac{-1}{1-q_1^{-1}}
    \sum_{r=1}^{a} \delta\left(\frac{\sw_{r}}{z}\right) \sZ(\sw_{r})
    \prod_{1\leq s\leq a}^{s\ne r} \frac{\sw_r-q_2^{-1}\sw_s}{\sw_r-\sw_s} D_{r}^{-1} \,, \\
  & f(z)\mapsto \frac{1}{1-q_1} \sum_{r=1}^{a} \delta\left(\frac{q_1\sw_{r}}{z}\right)
    \prod_{1\leq s\leq a}^{s\ne r} \frac{\sw_r-q_2\sw_s}{\sw_r-\sw_s} D_{r} \,, \\
  & \psi^\pm(z)\mapsto
    \left(\sZ(z) \cdot \prod_{r=1}^{a} \frac{(z-q_2^{-1}\sw_r)(z-q_3^{-1}\sw_r)}{(z-\sw_r)(z-q_1\sw_r)} \right)^\pm \,.
\end{split}
\end{equation}
As before, $\gamma(z)^\pm$ denotes the expansion of a rational function $\gamma(z)$ in $z^{\mp 1}$, respectively.
\end{Prop}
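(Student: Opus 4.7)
The uniqueness is clear from~\eqref{eq:GKLO-assignment toroidal-1}, which prescribes the images of all generators. For existence, the plan is to verify that the assignment respects the defining relations~\eqref{t1}--\eqref{t8}, in parallel to the proof of Proposition~\ref{prop:homomorphism} in~\cite{ft1} adapted to the toroidal $\gl_1$ setting.

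First I would dispose of the easy relations. Relation~\eqref{t1} is trivial because the image of $\psi^\pm(z)$ is a scalar rational function in the commuting $\sw_r$. For~\eqref{t4},~\eqref{t5}, using $D_r\sw_s=q_1^{\delta_{rs}}\sw_s D_r$, moving the $\psi^\epsilon(z)$-image past $\delta(\sw_s/z)D_s^{\mp 1}$ and collapsing the delta produces exactly the scalar ratio
\begin{equation*}
  \frac{(q_1 z - w)(q_2 z - w)(q_3 z - w)}{(z - q_1 w)(z - q_2 w)(z - q_3 w)}
\end{equation*}
(or its inverse) required by these relations; the point is that the $\sw_s$-factor in $\psi^\pm$ is exactly $\frac{(z-q_2^{-1}\sw_s)(z-q_3^{-1}\sw_s)}{(z-\sw_s)(z-q_1\sw_s)}$, whose shift $\sw_s \mapsto q_1^{\pm 1}\sw_s$ yields the desired ratio after using $q_1q_2q_3=1$.

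The main computation is~\eqref{t6}. Writing $e(z) = \frac{-1}{1-q_1^{-1}}\sum_r \delta(\sw_r/z) A_r D_r^{-1}$ and $f(w) = \frac{1}{1-q_1}\sum_s \delta(q_1\sw_s/w) B_s D_s$ with $A_r, B_s$ the explicit $\sw$-factors from~\eqref{eq:GKLO-assignment toroidal-1}, I would expand $[e(z), f(w)]$ as a double sum over $r,s$. The off-diagonal pieces $r \ne s$ cancel via the rational identity $A_r\cdot B_s|_{\sw_r\to q_1^{-1}\sw_r} = B_s\cdot A_r|_{\sw_s\to q_1\sw_s}$, which reduces to a one-line check after expanding the relevant rational factors. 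The diagonal pieces combine, using $\delta(\sw_r/z)\delta(\sw_r/w) = \delta(\sw_r/z)\delta(z/w)$, into
\begin{equation*}
  \frac{-\delta(z/w)}{(1-q_1^{-1})(1-q_1)}\sum_{r} \Bigl(\delta(\sw_r/z)\, A_r\cdot B_r|_{\sw_r\to q_1^{-1}\sw_r}\,-\,\delta(q_1\sw_r/z)\, B_r\cdot A_r|_{\sw_r\to q_1\sw_r}\Bigr) \,,
\end{equation*}
which is identified with $\frac{1}{\beta_1}\delta(z/w)(\psi^+(z)-\psi^-(z))$ via the standard residue identity $\gamma(z)^+ - \gamma(z)^- = \sum_k \frac{\mathrm{Res}_{z=\alpha_k}\gamma(z)}{\alpha_k}\cdot \delta(\alpha_k/z)$ for rational $\gamma$ with simple poles $\alpha_k$, applied to the $\psi^\pm$-image whose simple poles lie precisely at $z = \sw_r$ and $z = q_1\sw_r$. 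The final scalar matching uses $q_1q_2q_3=1$.

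For the quadratic relations~\eqref{t2},~\eqref{t3} and the cubic Serre relations~\eqref{t7},~\eqref{t8}, the cleanest path is to defer them to the shuffle realization of Theorem~\ref{thm:shuffle homomorphism toroidal-1}, in the spirit of Remark~\ref{rem:shuffle simplifies gklo}: the shuffle homomorphism $\widehat{\Phi}^{\unl{\sz}}_{a}$ there can be constructed using only the shuffle product and~\eqref{eq:DW-comm relations}, and when precomposed with the embedding of the positive (resp.\ negative) subalgebra into the shuffle algebra---whose pole structure and wheel conditions encode precisely~\eqref{t2} and~\eqref{t7} (resp.~\eqref{t3} and~\eqref{t8})---it automatically respects these four relations. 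The main obstacle will be the delta-function and residue bookkeeping in~\eqref{t6}: tracking the $\sZ(z)$-contribution from the shift $b^-=N$ and verifying that the scalar prefactor $\frac{-1}{(1-q_1^{-1})(1-q_1)}$ combines with the residues to produce precisely $\frac{1}{\beta_1}$.
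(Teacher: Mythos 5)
Your proposal is correct and follows essentially the route the paper intends (the paper omits this proof, deferring to the analogue of Proposition~\ref{prop:homomorphism} from~\cite{ft1} together with the shortcut of Remark~\ref{rem:shuffle simplifies gklo}): relations~(\ref{t1},~\ref{t4},~\ref{t5}) directly, relation~(\ref{t6}) via the delta-function/residue decomposition, and relations~(\ref{t2},~\ref{t3},~\ref{t7},~\ref{t8}) via the shuffle homomorphism of Theorem~\ref{thm:shuffle homomorphism toroidal-1}, which is indeed non-circular since only its first assertion (that $\widehat{\Phi}^{\unl{\sz}}_{a}$ is an algebra homomorphism on all of $\BS$) is needed. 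I checked your off-diagonal cancellation identity and the diagonal scalar matching $\frac{-1}{(1-q_1^{-1})(1-q_1)}=\frac{q_1}{(1-q_1)^2}=\frac{(1-q_2^{-1})(1-q_3^{-1})}{\beta_1(1-q_1)}$; both hold using $q_1q_2q_3=1$.
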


\begin{Rem}
Due to the $S(3)$-symmetry of $\ddot{U}^{(N)}_{q_1,q_2,q_3}$ (Remark~\ref{rem:6-symmetry}(b)), we can replace
$q_2$ by $q_3$ in \eqref{eq:GKLO-assignment toroidal-1}. Overall, we have six similar homomorphisms:
two $\ddot{U}^{(N)}_{q_1,q_2,q_3} \to \wt{\CA}^{q_i}$ for each~$i=1,2,3$.
\end{Rem}

\begin{Rem}
In the unshifted case $N=0$,~\eqref{eq:GKLO-homom Jordan} factors through $\wt{\Phi}_{a} \colon \ddot{U}_{q_1,q_2,q_3}(\gl_1)\to \wt{\CA}^{q_1}$
(see Remark~\ref{rem:6-symmetry}(a)) that maps:
\begin{equation}\label{eq:unshifted toroidal-1 map1}
\begin{split}
  \wt{\Phi}_{a}\colon
  & e_0\mapsto \frac{-1}{1-q_1^{-1}} \sum_{r=1}^{a} \prod_{1\leq s\leq a}^{s\ne r} \frac{\sw_r-q_2^{-1}\sw_s}{\sw_r-\sw_s} D_{r}^{-1} \,,\quad
    f_0\mapsto \frac{1}{1-q_1} \sum_{r=1}^{a} \prod_{1\leq s\leq a}^{s\ne r} \frac{\sw_r-q_2\sw_s}{\sw_r-\sw_s} D_{r} \,, \\
  & \psi^+_1\mapsto (1-q_2^{-1})(1-q_3^{-1})\sum_{r=1}^{a} \sw_r \,,\quad
    \psi^-_{-1}\mapsto (1-q_2)(1-q_3)\sum_{r=1}^{a} \sw_r^{-1} \,,\quad
    \psi^\pm_0\mapsto 1 \,.
\end{split}
\end{equation}
Let us compare this with~\cite[Proposition~5.1]{ffjmm}, where a natural $\ddot{U}_{q_1,q_2,q_3}(\gl_1)$-representation of~\cite[Lemma~3.7]{ffjmm}
is interpreted as an algebra homomorphism $\bar{\Phi}_{a}\colon \ddot{U}_{q_1,q_2,q_3}(\gl_1)\to \wt{\CA}^{q_1}$ given by
(we swap $q_2 \leftrightarrow q_3$ in the formulas of~\cite{ffjmm}):
\begin{equation}\label{eq:unshifted toroidal-1 map2}
\begin{split}
  \bar{\Phi}_{a}\colon
  & e_0\mapsto \frac{1}{1-q_1}\sum_{r=1}^{a} \sw_r \,,\quad
    f_0\mapsto \frac{-1}{1-q_1^{-1}}\sum_{r=1}^{a} \sw_r^{-1} \,,\quad
    \psi^\pm_0\mapsto 1 \,, \\
  & \psi^+_1 \mapsto (1-q_2)(1-q_3) \sum_{r=1}^{a} \prod_{s\ne r} \frac{\sw_r-q_2\sw_s}{\sw_r-\sw_s} D_{r} \,,\\
  & \psi^-_{-1} \mapsto (1-q_2^{-1})(1-q_3^{-1}) \sum_{r=1}^{a} \prod_{s\ne r} \frac{\sw_r-q_2^{-1}\sw_s}{\sw_r-\sw_s} D_{r}^{-1} \,.
\end{split}
\end{equation}
Both $\wt{\Phi}_a$ and $\bar{\Phi}_a$ factor through the central quotient $\ddot{U}_{q_1,q_2,q_3}(\gl_1)/(\psi^\pm_0-1)$
and the resulting homomorphisms $\wt{\Phi}_a,\bar{\Phi}_a\colon \ddot{U}_{q_1,q_2,q_3}(\gl_1)/(\psi^\pm_0-1)\to \wt{\CA}^{q_1}$
are related via $\bar{\Phi}_a=\wt{\Phi}_a\circ \varpi$, where $\varpi$ is an automorphism
(a version of the Burban-Schiffmann/Miki's automorphism) of $\ddot{U}_{q_1,q_2,q_3}(\gl_1)/(\psi^\pm_0-1)$ determined by:
\begin{equation}\label{eq:rotation}
  \varpi\colon
  \psi^+_1\mapsto \beta_1 f_0 \,,\quad \psi^-_{-1}\mapsto \beta_1 e_0 \,,\quad
  e_0\mapsto q_1^{-1} \beta_1^{-1} \psi^+_1 \,,\quad f_0\mapsto q_1 \beta_1^{-1} \psi^-_{-1} \,.
\end{equation}
\end{Rem}


\subsection{Shuffle algebra realization of the positive and negative subalgebras}\label{ssec shuffle toroidal-1}
\

Similar to~(\ref{eq:halves isomorphism},~\ref{eq:positive-vs-negative}), we have the following algebra isomorphisms:
\begin{equation}\label{eq:isomorphisms toroidal-1}
  \ddot{U}^{(N),>}_{q_1,q_2,q_3} \,\iso\, \ddot{U}^>_{q_1,q_2,q_3}(\gl_1) \,,\
  \ddot{U}^{(N),<}_{q_1,q_2,q_3} \,\iso\, \ddot{U}^<_{q_1,q_2,q_3}(\gl_1) \,,\
  \ddot{U}^<_{q_1,q_2,q_3}(\gl_1) \,\iso\, \ddot{U}^>_{q_1,q_2,q_3}(\gl_1)^{\op} \,,
\end{equation}
with subalgebras
  $\ddot{U}^{(N),>}_{q_1,q_2,q_3},\ddot{U}^{>}_{q_1,q_2,q_3}(\gl_1),
   \ddot{U}^{(N),<}_{q_1,q_2,q_3},\ddot{U}^{<}_{q_1,q_2,q_3}(\gl_1)$
defined in a self-explaining way.

\medskip
\noindent
Consider an $\BN$-graded $\BC$-vector space $\BS=\underset{k\in\BN}{\bigoplus} \BS_{k}$,
with the graded components
\begin{equation}\label{pole conditions toroidal-1}
  \BS_{k}=
  \left\{F=\frac{f(x_1,\ldots,x_k)}{\prod_{1\leq r\ne s \leq k} (x_{r}-x_{s})} \,\Big|\,
  f\in \BC\left[x_{1}^{\pm 1},\ldots,x_k^{\pm 1}\right]^{S(k)}\right\} \,.
\end{equation}
We also fix a rational function
\begin{equation}\label{eq:shuffle factor toroidal-1}
  \zeta\left(\frac{z}{w}\right)=\frac{(z-q_1^{-1}w)(z-q_2^{-1}w)(z-q_3^{-1}w)}{(z-w)^3} \,.
\end{equation}
The bilinear \emph{shuffle product} $\star$ on $\BS$ is defined completely analogously to~\eqref{shuffle product}, thus
endowing $\BS$ with a structure of an associative unital $\BC$-algebra. As before, we are interested in an $\BN$-graded subspace of $\BS$
defined by the following \emph{wheel conditions}:
\begin{equation}\label{eq:wheel conditions toroidal-1}
  F(x_1,\ldots,x_k)=0 \quad \mathrm{once} \quad
  \left\{ \frac{x_1}{x_2},\frac{x_2}{x_3}, \frac{x_3}{x_1} \right\}=\{q_1,q_2,q_3\} \,.
\end{equation}
Let $S\subset \BS$ denote the subspace of all such elements $F$, which is easily seen to be $\star$-closed. The resulting
\emph{shuffle algebra} $\left(S,\star\right)$ is related to $\ddot{U}_{q_1,q_2,q_3}(\gl_1)$ via the following result of~\cite{n1}:

\begin{Prop}\cite{n1}\label{thm:shuffle iso tootidal-1}
The assignments $e_{r}\mapsto x_{1}^r$ and $f_{r}\mapsto x_{1}^r$ for $r\in \BZ$ give rise to $\BC$-algebra isomorphisms
\begin{equation}\label{eq:Upsilon toroidal-1}
  \Upsilon\colon \ddot{U}^>_{q_1,q_2,q_3}(\gl_1) \,\iso\, S
  \qquad \mathrm{and} \qquad
  \Upsilon\colon \ddot{U}^<_{q_1,q_2,q_3}(\gl_1) \,\iso\, S^{\op} \,.
\end{equation}
\end{Prop}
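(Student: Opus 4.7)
The plan is to establish the first isomorphism $\Upsilon\colon \ddot{U}^>_{q_1,q_2,q_3}(\gl_1) \iso S$; the second isomorphism then follows by combining it with the opposite-algebra identification $\ddot{U}^<_{q_1,q_2,q_3}(\gl_1) \iso \ddot{U}^>_{q_1,q_2,q_3}(\gl_1)^{\op}$ of~\eqref{eq:isomorphisms toroidal-1}, after observing that reversing the shuffle product on $S$ amounts to passing to $S^{\op}$.

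First I would show that the assignment $e_r \mapsto x_1^r$ extends to a well-defined $\BC$-algebra homomorphism $\widetilde{\Upsilon}\colon \ddot{U}^>_{q_1,q_2,q_3}(\gl_1) \to \BS$, where $\BS$ is equipped with the shuffle product built from the $\zeta$-factor of~\eqref{eq:shuffle factor toroidal-1}. This reduces to verifying compatibility with the two families of defining relations of the positive half: the quadratic relations~\eqref{t2} and the cubic Serre-type relations~\eqref{t7}. The quadratic ones are essentially built into the choice of $\zeta$: its numerator carries exactly the three linear factors $(z-q_i^{-1}w)$, so after symmetrization the shuffle product $x_1^r \star x_1^s$ automatically satisfies the image of~\eqref{t2} (one uses $q_1 q_2 q_3 = 1$ to match the factors of~\eqref{t2} with those of $\zeta$). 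The cubic relations~\eqref{t7}, on the other hand, translate into the vanishing of triple-variable shuffle elements under the specializations $(x_1,x_2,x_3) \mapsto (w, q_1^{-1} w, q_3 w)$ and their $S(3)$-orbit, which is precisely the wheel condition~\eqref{eq:wheel conditions toroidal-1}. Hence $\widetilde{\Upsilon}$ factors through the subalgebra $S \subset \BS$, yielding $\Upsilon\colon \ddot{U}^>_{q_1,q_2,q_3}(\gl_1) \to S$.

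The remaining task is to prove that $\Upsilon$ is bijective. For surjectivity, I would argue by induction on the grading $k$: given $F \in S_k$, extract its leading asymptotic behavior in a single variable (say, $x_k \to \infty$) after clearing poles, reducing to an element of $S_{k-1}$ whose preimage is known by induction, and then subtract an appropriate shuffle product $G \star x_1^r$ to progressively simplify $F$ down to degree zero. Injectivity follows by comparing the graded dimensions of both sides with respect to the natural $\BN \times \BZ$-bigrading (number of variables and total Laurent degree in the $x$'s): the graded character of $\ddot{U}^>_{q_1,q_2,q_3}(\gl_1)$ is accessible via a PBW-type basis, while that of $S$ is directly computable from~(\ref{pole conditions toroidal-1},~\ref{eq:wheel conditions toroidal-1}). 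The main obstacle is the surjectivity step: one must show that the concise pole and wheel conditions defining $S$ capture \emph{precisely} the image of the quantum toroidal algebra and nothing more. This delicate combinatorial analysis of how shuffle products of linear elements generate all wheel-vanishing symmetric rational functions is the heart of Negu\c{t}'s proof in~\cite{n1}.
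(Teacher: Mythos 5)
The paper does not prove this statement: it is stated with a \cite{n1} citation and no proof is given, since it is the main theorem of Negu\c{t}'s paper \emph{The shuffle algebra revisited}. So strictly speaking there is no ``paper's own proof'' to compare against. That said, let me comment on the logic of your sketch.

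Your outline has one genuine imprecision in the well-definedness step. You assert that the cubic Serre relations~\eqref{t7} ``translate into'' the wheel conditions~\eqref{eq:wheel conditions toroidal-1}, and conclude from this that $\widetilde{\Upsilon}$ factors through $S\subset\BS$. These are two logically independent facts and you are conflating them. What one actually checks is that the images of the relators~\eqref{t2} and~\eqref{t7} vanish as elements of the \emph{ambient} space $\BS$ --- a direct (if tedious) symmetrization computation, not a statement about wheel conditions. That gives a well-defined homomorphism $\widetilde{\Upsilon}\colon \ddot{U}^>_{q_1,q_2,q_3}(\gl_1)\to\BS$. The image then lands in $S$ for a much simpler reason: the generators $x_1^r$ trivially belong to $S$ (there being fewer than three variables), and $S$ is a $\star$-closed subspace of $\BS$, so any $\star$-polynomial in the $x_1^r$'s lies in $S$. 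The wheel conditions play no role in getting the map to be well-defined; their entire content is on the surjectivity side, where one must show that they cut out exactly the image and nothing more. You do correctly flag that surjectivity is the crux, so the overall shape of your plan is sound, but the stated rationale for the factorization through $S$ is not.

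Two smaller remarks. First, the assertion that the shuffle product of $x_1^r$ and $x_1^s$ ``automatically satisfies'' the image of~\eqref{t2} is a real claim that deserves a sentence of justification --- one writes $x_1^r\star x_1^s - x_1^s\star x_1^r$ explicitly, pulls out the common factor, and observes that the surviving rational factor is $\zeta(x_1/x_2)-\zeta(x_2/x_1)$, whose numerator matches the polynomial coefficients of~\eqref{t2} after using $q_1q_2q_3=1$. Second, your surjectivity strategy (reduce a single variable's Laurent degree by subtracting shuffle products) is a reasonable heuristic but is considerably harder to carry out than you let on, precisely because the pole and wheel conditions interact nontrivially under such subtractions; Negu\c{t}'s argument in~\cite{n1} uses a more refined specialization/limit technique to control the graded pieces. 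The graded-dimension comparison for injectivity is fine in spirit, but one should say which ingredient (a PBW-type theorem for the quantum toroidal $\gl_1$, or a nondegenerate pairing) supplies the character of the source.

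Finally, the passage from the first isomorphism to the second via $\ddot{U}^<\cong(\ddot{U}^>)^{\op}$ is correct and matches the paper's setup in~\eqref{eq:isomorphisms toroidal-1}.
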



\subsection{Shuffle algebra realization of the GKLO-type homomorphisms}\label{ssec shuffle GKLO toroidal-1}
\

For $1\leq r\leq a$, we define:
\begin{equation}\label{eq:Y-factors toroidal-1}
  Y_{r}(z):=\frac{-1}{1-q_1^{-1}} \sZ(z) \prod_{1\leq s\leq a}^{s\ne r} \frac{z-\sw_s q_2^{-1}}{z-\sw_s} \,,\quad
  Y'_{r}(z):=\frac{1}{1-q_1} \prod_{1\leq s\leq a}^{s\ne r} \frac{z q_1^{-1} - \sw_s q_2}{z q_1^{-1} - \sw_s} \,.
\end{equation}
We also define
\begin{equation}\label{eq:phi-factor toroidal-1}
  \varphi\left(\frac{z}{w}\right):=\frac{(q_1^{1/2}z-q_1^{-1/2}w)(q_2^{1/2}z-q_2^{-1/2}w)}{(z-w)^2} \,.
\end{equation}
Let $\wt{\CA}^{q_1,'}$ be the localization of $\wt{\CA}^{q_1}$ by the multiplicative set
generated by $\{\sw_{r}-q_1^mq_2\sw_{s}\}_{r\ne s}^{m\in \BZ}$.
The following is our key result and is proved completely analogously to Theorem~\ref{thm:shuffle homomorphism}:

\begin{Thm}\label{thm:shuffle homomorphism toroidal-1}
(a) The assignment
\begin{equation}\label{eq:explicit shuffle homom 1 toroidal-1}
\begin{split}
  & \BS_{k} \ni E\mapsto
    \sum_{m_1+\ldots+m_a=k}
    \left\{\prod_{r=1}^{a} \prod_{p=1}^{m_r} Y_{r}\Big(\sw_{r} q_1^{-(p-1)}\Big)\cdot
    E\left(\Big\{\sw_{r} q_1^{-(p-1)}\Big\}_{1\leq r\leq a}^{1\leq p\leq m_r}\right)\times\right.\\
  & \left.
    \prod_{1\leq r\leq a} \prod_{1\leq p_1<p_2\leq m_r}
    \zeta^{-1} \Big(\sw_{r} q_1^{-(p_1-1)} \Big/ \sw_{r} q_1^{-(p_2-1)}\Big)\, \times\right.\\
  & \left.
    \prod_{1\leq r_1\neq r_2\leq a} \prod_{1\leq p_1\leq m_{r_1}}^{1\leq p_2\leq m_{r_2}}
    \varphi^{-1}\Big(\sw_{r_1} q_1^{-(p_1-1)} \Big/ \sw_{r_2} q_1^{-(p_2-1)}\Big) \cdot
    \prod_{r=1}^{a} D_{r}^{-m_r}\right\}
\end{split}
\end{equation}
gives rise to the algebra homomorphism
\begin{equation}\label{eq:Psi-tilde + toroidal-1}
  \widehat{\Phi}^{\unl{\sz}}_{a}\colon \BS\longrightarrow \wt{\CA}^{q_1,'} \,.
\end{equation}
Moreover, the composition
\begin{equation}\label{eq:homom extension 1 toroidal-1}
  \ddot{U}^{(N),>}_{q_1,q_2,q_3} \, \overset{\eqref{eq:isomorphisms toroidal-1}}{\iso}\, \ddot{U}^{>}_{q_1,q_2,q_3}(\gl_1)
  \, \overset{\Upsilon}{\iso}\, S \overset{\hat{\Phi}^{\unl{\sz}}_{a}}{\longrightarrow} \wt{\CA}^{q_1,'}
\end{equation}
coincides with the restriction of the homomorphism $\wt{\Phi}^{\unl{\sz}}_{a}$ of~\eqref{eq:GKLO-homom Jordan}
to the subalgebra $\ddot{U}^{(N),>}_{q_1,q_2,q_3}$.

\medskip
\noindent
(b) The assignment
\begin{equation}\label{eq:explicit shuffle homom 2 toroidal-1}
\begin{split}
  & \BS^{\op}_{k} \ni F\mapsto
    \sum_{m_1+\ldots+m_a=k}
    \left\{\prod_{r=1}^{a} \prod_{p=1}^{m_r} Y'_{r}\Big(\sw_{r} q_1^p\Big)\cdot
    F\left(\Big\{\sw_{r} q_1^{p}\Big\}_{1\leq r\leq a}^{1\leq p\leq m_r}\right)\times\right.\\
  & \left.
    \prod_{1\leq r\leq a} \prod_{1\leq p_1<p_2\leq m_r}
    \zeta^{-1} \Big(\sw_{r} q_1^{p_2} \Big/ \sw_{r} q_1^{p_1}\Big)\, \times\right.\\
  & \left.
    \prod_{1\leq r_1\neq r_2\leq a} \prod_{1\leq p_1\leq m_{r_1}}^{1\leq p_2\leq m_{r_2}}
    \varphi^{-1}\Big(\sw_{r_2} q_1^{p_2} \Big/ \sw_{r_1} q_1^{p_1}\Big) \cdot
    \prod_{r=1}^{a} D_{r}^{m_r}\right\}
\end{split}
\end{equation}
gives rise to the algebra homomorphism
\begin{equation}\label{eq:Psi-tilde - toroidal-1}
  \widehat{\Phi}^{\unl{\sz}}_{a}\colon \BS^{\op} \longrightarrow \wt{\CA}^{q_1,'} \,.
\end{equation}
Moreover, the composition
\begin{equation}\label{eq:homom extension 2 toroidal-1}
  \ddot{U}^{(N),<}_{q_1,q_2,q_3} \, \overset{\eqref{eq:isomorphisms toroidal-1}}{\iso}\, \ddot{U}^{<}_{q_1,q_2,q_3}(\gl_1)
  \, \overset{\Upsilon}{\iso}\, S^{\op} \overset{\hat{\Phi}^{\unl{\sz}}_{a}}{\longrightarrow} \wt{\CA}^{q_1,'}
\end{equation}
coincides with the restriction of the homomorphism $\wt{\Phi}^{\unl{\sz}}_{a}$ of~\eqref{eq:GKLO-homom Jordan}
to the subalgebra $\ddot{U}^{(N),<}_{q_1,q_2,q_3}$.
\end{Thm}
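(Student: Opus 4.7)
The plan is to follow, step by step, the proof of Theorem~\ref{thm:shuffle homomorphism}, the only substantive novelty being the appearance of the cross-slot factor $\varphi$ from~\eqref{eq:phi-factor toroidal-1} in place of $\zeta^{-1}_{ii}$. Let me denote by $\widehat{\Phi}(E)$ the right-hand side of~\eqref{eq:explicit shuffle homom 1 toroidal-1} for $E\in\BS_k$. Part (b) will follow by an entirely analogous argument with $Y'_r$, the shifts $\sw_r q_1^p$, and right-multiplication by $D_r^{+m_r}$ replacing the corresponding ingredients of (a).

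First, I would confirm that each summand of~\eqref{eq:explicit shuffle homom 1 toroidal-1} lies in $\wt{\CA}^{q_1}$. The specialization $E(\{\sw_r q_1^{-(p-1)}\})$ is well defined because the denominator $\prod_{r\ne s}(x_r-x_s)$ in~\eqref{pole conditions toroidal-1} specializes to products of differences $\sw_{r_1} - q_1^m \sw_{r_2}$, each invertible in $\wt{\CA}^{q_1}$; the $Y_r$ of~\eqref{eq:Y-factors toroidal-1} and the specialized $\zeta^{-1}$, $\varphi^{-1}$ factors manifestly belong to this localization. Independence of the result on the internal ordering within each stack $\{\sw_r q_1^{-(p-1)}\}_{p=1}^{m_r}$ follows from the $S(k)$-symmetry of the Laurent polynomial numerator of $E$.

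The core of the argument is the multiplicativity identity $\widehat{\Phi}(E\star F) = \widehat{\Phi}(E)\,\widehat{\Phi}(F)$ for $E\in \BS_k$ and $F\in \BS_\ell$. Expanding $\widehat{\Phi}(E)\,\widehat{\Phi}(F)$ as an independent double sum over $(m_r)$ and $(m'_r)$ with $\sum m_r=k$ and $\sum m'_r=\ell$, and commuting each $D_r^{-m_r}$ from the first factor past the $\sw$-arguments of the second (which shifts $\sw_r \mapsto q_1^{m_r}\sw_r$ only within the same slot $r$, and does nothing for other slots), one finds that the combined stack at slot $r$ becomes $\{\sw_r q_1^{-(p-1)}\}_{p=1}^{m_r+m'_r}$. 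On the other hand, $\widehat{\Phi}(E\star F)$ sums over total distributions $(n_r)$ with $\sum n_r = k+\ell$, its integrand arising from the symmetrized shuffle product~\eqref{shuffle product} evaluated at the very same stack. The matching then proceeds summand by summand: the $\zeta$-factors from the shuffle product, once specialized, precisely supply the ``missing'' within-slot $\zeta^{-1}$ entries between $E$- and $F$-indices (those with $p_1\le m_r<p_2$), while the cross-slot $\zeta$-specializations combine with the $\varphi^{-1}$ factors already present in~\eqref{eq:explicit shuffle homom 1 toroidal-1} via the compatibility identity $\varphi(z/w) = q_3^{1/2}(q_1z-w)(q_2z-w)/(z-w)^2$ which uses $q_1q_2q_3=1$, with the residual factor absorbed by the $Y_r$-prefactor bookkeeping. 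The combinatorial work here---tracking how $\frac{1}{k!\ell!}\Sym$ in the shuffle product resolves into a sum over ordered partitions---is tedious but formally identical to that of the proof of Theorem~\ref{thm:shuffle homomorphism}, and constitutes the main obstacle.

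Finally, to identify $\widehat{\Phi}^{\unl\sz}_a\circ\Upsilon$ with the restriction of $\wt{\Phi}^{\unl\sz}_a$ to $\ddot{U}^{(N),>}_{q_1,q_2,q_3}$, by Proposition~\ref{thm:shuffle iso tootidal-1} it suffices to check agreement on the generators $e_r = \Upsilon^{-1}(x_1^r)$. For $E = x_1^r \in \BS_1$, the formula~\eqref{eq:explicit shuffle homom 1 toroidal-1} collapses to $\sum_{r'=1}^a Y_{r'}(\sw_{r'})\,\sw_{r'}^r\,D_{r'}^{-1}$, which is exactly the $z^{-r}$ coefficient of $\wt{\Phi}^{\unl\sz}_a(e(z))$ obtained from~\eqref{eq:GKLO-assignment toroidal-1} by expanding $\delta(\sw_{r'}/z)=\sum_n \sw_{r'}^n z^{-n}$. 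No conceptual ingredient beyond those already developed for Theorem~\ref{thm:shuffle homomorphism} is required; the entire novelty is the appearance of the cross-slot factor $\varphi$, which is dictated by the three-factor numerator of $\zeta$ in~\eqref{eq:shuffle factor toroidal-1} combined with the constraint $q_1q_2q_3=1$.
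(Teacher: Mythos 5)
Your proposal is correct and follows essentially the same route as the paper, which simply states that Theorem~\ref{thm:shuffle homomorphism toroidal-1} is ``proved completely analogously to Theorem~\ref{thm:shuffle homomorphism},'' and in turn proves the latter by a ``tedious straightforward verification'' of multiplicativity plus a check on the degree-one generators. Your outline spells out the three main steps (well-definedness in $\wt\CA^{q_1}$, the term-by-term matching of $\widehat\Phi(E\star F)$ against $\widehat\Phi(E)\widehat\Phi(F)$ via re-slotting after commuting $D_r$, and the identification on $e_r=\Upsilon^{-1}(x_1^r)$) at the same level of detail as the paper, and your isolation of the role of $\varphi$ as the only new ingredient compared to the affine case is apt. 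One small slip: when passing $D_r^{-m_r}$ from left to right across $\sw_r$, the relation $D_r\sw_r=q_1\sw_r D_r$ gives $\sw_r\mapsto q_1^{-m_r}\sw_r$, not $q_1^{m_r}\sw_r$; this is evidently a typo, since the stack you obtain, $\{\sw_r q_1^{-(p-1)}\}_{p=1}^{m_r+m'_r}$, is the correct one.
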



\subsection{Special difference operators}\label{ssec special diff operators toroidal-1}
\

For any $g\in \BC[x_1^{\pm 1},\ldots,x_k^{\pm 1}]^{S(k)}$, consider the following shuffle elements $\wt{E}_k(g)\in S_{k}$:
\begin{equation}\label{eq:important E-elements toroidal-1}
  \wt{E}_k(g):=q_3^{\frac{k-k^2}{2}} (q_1^{-1}-1)^k \cdot
  \frac{\prod_{1\leq r\ne s\leq k} (x_{r}-q_1^{-1}x_{s})\cdot g(x_1,\ldots,x_k)}
       {\prod_{1\leq r\ne s\leq k} (x_{r}-x_{s})} \,,
\end{equation}
which obviously satisfy the wheel conditions~\eqref{eq:wheel conditions toroidal-1}. Due to
Proposition~\ref{thm:shuffle iso tootidal-1}, $\wt{E}_k(g)=\Upsilon(\wt{e}_k(g))$ for unique
elements $\wt{e}_k(g)\in \ddot{U}^{(N),>}_{q_1,q_2,q_3}\simeq \ddot{U}^>_{q_1,q_2,q_3}(\gl_1)$, so that
$\widehat{\Phi}^{\unl{\sz}}_{a}(\wt{E}_k(g))=\wt{\Phi}^{\unl{\sz}}_{a}(\wt{e}_k(g))$ by
Theorem~\ref{thm:shuffle homomorphism toroidal-1}(a). We also consider $\wt{F}_{k}(g)\in S^{\op}_{k}$ defined via:
\begin{equation}\label{eq:important F-elements toroidal-1}
  \wt{F}_k(g):=(q_2/q_1)^{\frac{k-k^2}{2}}(1-q_1)^k \cdot
  \frac{\prod_{1\leq r\ne s\leq k} (x_{r}-q_1^{-1}x_{s})\cdot g(x_1,\ldots,x_k)}
       {\prod_{1\leq r\ne s\leq k} (x_{r}-x_{s})} \,.
\end{equation}
The following result is established completely analogously to Lemma~\ref{lem:image of important elements}:

\begin{Lem}\label{lem:image of important elements toroidal-1}
(a) For $\wt{E}_k(g)\in S_{k}$ given by~(\ref{eq:important E-elements toroidal-1}), we have:
\begin{equation}\label{eq:image of important E toroidal-1}
  \widehat{\Phi}^{\unl{\sz}}_{a}(\wt{E}_k(g)) \ =
  \sum_{J \subset \{1,\ldots,a\}}^{|J|=k}
  \left\{\prod_{r\in J}^{s\notin J} \frac{\sw_r-q_2^{-1}\sw_s}{\sw_r-\sw_s} \cdot
  \prod_{r\in J} \sZ(\sw_{r})\cdot g\Big(\{\sw_{r}\}_{r\in J}\Big)\cdot \prod_{r\in J} D_{r}^{-1}\right\} \,.
\end{equation}

\noindent
(b) For $\wt{F}_k(g)\in S^{\op}_{k}$ given by~(\ref{eq:important F-elements toroidal-1}), we have:
\begin{equation}\label{eq:image of important F toroidal-1}
  \widehat{\Phi}^{\unl{\sz}}_{a}(\wt{F}_k(g)) \ =
  \sum_{J \subset \{1,\ldots,a\}}^{|J|=k}
  \left\{\prod_{r\in J}^{s\notin J} \frac{\sw_r-q_2\sw_s}{\sw_r-\sw_s} \cdot
  g\Big(\{q_1\sw_{r}\}_{r\in J}\Big)\cdot \prod_{r\in J} D_{r}\right\} \,.
\end{equation}
\end{Lem}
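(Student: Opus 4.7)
The plan is to follow the strategy outlined in the proof of Lemma~\ref{lem:image of important elements}: exploit the vanishing induced by the factor $\prod_{1\leq r\ne s\leq k}(x_r-q_1^{-1}x_s)$ present in both $\wt{E}_k(g)$ and $\wt{F}_k(g)$ to collapse the sum over compositions into a sum over subsets, and then carry out the resulting direct simplification.

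For part (a), the first step is to argue that any summand of~\eqref{eq:explicit shuffle homom 1 toroidal-1} with some $m_r\geq 2$ contributes zero. Indeed, in such a summand we must evaluate $\wt{E}_k(g)$ at a tuple of variables that contains both $\sw_r$ (coming from $p=1$) and $\sw_r q_1^{-1}$ (coming from $p=2$). The numerator factor $\prod_{1\leq i\ne j\leq k}(x_i-q_1^{-1}x_j)$ of~\eqref{eq:important E-elements toroidal-1} then has the subfactor $\sw_r q_1^{-1}-q_1^{-1}\sw_r=0$, and the whole term vanishes. Hence the sum is supported on compositions $(m_1,\ldots,m_a)$ with each $m_r\in\{0,1\}$, which we identify with subsets $J\subset\{1,\ldots,a\}$ of cardinality $k$.

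The second step is direct bookkeeping on the surviving summands. For such $J$, the inner-block factors $\prod_{p_1<p_2}\zeta^{-1}$ become empty, the shift operators collapse to $\prod_{r\in J}D_r^{-1}$, and the cross-block factors reduce to $\prod_{r_1\ne r_2\in J}\varphi^{-1}(\sw_{r_1}/\sw_{r_2})$. The evaluation of $\wt{E}_k(g)$ at $\{\sw_r\}_{r\in J}$ contributes the prefactor $q_3^{(k-k^2)/2}(q_1^{-1}-1)^k$, the Jacobian-like ratio $\prod_{r\ne s\in J}(\sw_r-q_1^{-1}\sw_s)/\prod_{r\ne s\in J}(\sw_r-\sw_s)$, and the factor $g(\{\sw_r\}_{r\in J})$. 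Combining with $\prod_{r\in J}Y_r(\sw_r)$, split as the product of $\sZ(\sw_r)$, the scalar $\left(\tfrac{-1}{1-q_1^{-1}}\right)^k$, the internal part $\prod_{r\in J,\,s\in J\setminus\{r\}}\frac{\sw_r-q_2^{-1}\sw_s}{\sw_r-\sw_s}$, and the external part $\prod_{r\in J}^{s\notin J}\frac{\sw_r-q_2^{-1}\sw_s}{\sw_r-\sw_s}$, one matches sign and scalar constants via $(q_1^{-1}-1)^k(1-q_1^{-1})^{-k}=(-1)^k$. The remaining task is to show that the product of $\varphi^{-1}(\sw_{r_1}/\sw_{r_2})$ over ordered pairs in $J$, multiplied by the ratios coming from $\wt{E}_k$ and the internal part of $Y$, collapses to $1$; this uses $q_1q_2q_3=1$ together with the identity $\varphi(z/w)=\frac{(q_1^{1/2}z-q_1^{-1/2}w)(q_2^{1/2}z-q_2^{-1/2}w)}{(z-w)^2}$ and the fact that pairing ordered pairs $(r_1,r_2)$ with $(r_2,r_1)$ converts $(z-q_1^{-1}w)(w-q_1^{-1}z)$ type factors into $\varphi$-type factors up to a symmetric rearrangement, with the leftover $q_3^{(k-k^2)/2}$ consumed by the corresponding symmetric power of $q_1q_2$. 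What survives is exactly the right-hand side of~\eqref{eq:image of important E toroidal-1}.

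Part (b) is completely parallel, starting from~\eqref{eq:explicit shuffle homom 2 toroidal-1} and~\eqref{eq:important F-elements toroidal-1}: the factor $\prod(x_r-q_1^{-1}x_s)$ again forces $m_r\in\{0,1\}$, and the identical type of cancellation between $Y'_r$, $\varphi^{-1}$, and the numerator of $\wt{F}_k(g)$ yields~\eqref{eq:image of important F toroidal-1}, with the shift $g(\{q_1\sw_r\}_{r\in J})$ explained by the substitution $x_r\mapsto\sw_r q_1$ at $p=1$ in~\eqref{eq:explicit shuffle homom 2 toroidal-1}. The main obstacle is thus not conceptual but combinatorial: tracking the normalizing constants and the powers of $q_1,q_2,q_3$ through the $\varphi^{-1}$ products, while using $q_1q_2q_3=1$ to match the answer. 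No new wheel-condition or algebraic input beyond Theorem~\ref{thm:shuffle homomorphism toroidal-1} and Proposition~\ref{thm:shuffle iso tootidal-1} is needed.
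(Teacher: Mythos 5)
Your proposal is correct and follows essentially the same route as the paper: the paper's proof (given for the prototype Lemma~\ref{lem:image of important elements} and declared to carry over verbatim) consists precisely of observing that the factor $\prod_{r\ne s}(x_r-q_1^{-1}x_s)$ kills every summand of~(\ref{eq:explicit shuffle homom 1 toroidal-1},~\ref{eq:explicit shuffle homom 2 toroidal-1}) with some $m_r>1$, reducing the sum to subsets $J$, followed by the same straightforward bookkeeping. Your cancellation of the $\varphi^{-1}$-products against the numerators of $\wt{E}_k(g)$, $\wt{F}_k(g)$ and the internal parts of $Y_r$, $Y'_r$ (using $q_1q_2q_3=1$ to absorb the prefactors $q_3^{(k-k^2)/2}$ and $(q_2/q_1)^{(k-k^2)/2}$) checks out.
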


\begin{Ex}\label{ex:Macdonald operators}
For $N=0$ and $g=1$, we recover the famous Macdonald difference operators:
\begin{equation}\label{eq:Macdonald operator}
\begin{split}
  & \widehat{\Phi}^{\unl{\sz}}_{a}(\wt{E}_k(1)) \ = \sum_{J \subset \{1,\ldots,a\}}^{|J|=k}
    \prod_{r\in J}^{s\notin J} \frac{\sw_r-q_2^{-1}\sw_s}{\sw_r-\sw_s} \cdot \prod_{r\in J} D_{r}^{-1}
    =: \mathcal{D}^{k}_{a}(q_1,q_2) \,, \\
  & \widehat{\Phi}^{\unl{\sz}}_{a}(\wt{F}_k(1)) \ = \sum_{J \subset \{1,\ldots,a\}}^{|J|=k}
    \prod_{r\in J}^{s\notin J} \frac{\sw_r-q_2\sw_s}{\sw_r-\sw_s} \cdot \prod_{r\in J} D_{r}
    =: \mathcal{D}^{k}_{a}(q_1^{-1},q_2^{-1}) \,.
\end{split}
\end{equation}
\end{Ex}

\begin{Rem}\label{rem:Macdonald commutativity}
We note that the crucial and rather nontrivial commutativity
\begin{equation*}
  \left[\mathcal{D}^{k}_a(q_1,q_2) \,,\, \mathcal{D}^{k'}_a(q_1,q_2)\right]=0 \quad \mathrm{for\ all} \quad 1\leq k,k'\leq a
\end{equation*}
thus arises as an immediate consequence of a simple equality $[\wt{E}_k(1),\wt{E}_{k'}(1)]=0$
in the shuffle algebra $S$, see~\cite[Proposition~2.21]{fhhsy}.
\end{Rem}


\section{Generalization to the quantum toroidal $\ssl_n\ (n\geq 3)$}\label{sec cyclic}

The above constructions admit natural generalizations to the case of shifted version of the quantum
toroidal algebra $\ddot{U}_{q,d}(\ssl_n)$, related (e.g.\ via~\cite{bfnb}) to the cyclic $n$-vertex quiver.
We shall state the key results, skipping the proofs when they are similar to those from Chapter~\ref{sec affine}.


\subsection{Shifted quantum toroidal $\ssl_n$}\label{ssec shifted toroidal-n}
\

For $n\geq 3$, consider an index set $[n]:=\{0,1,\ldots,n-1\}$ (also viewed as a set of residues modulo $n$).
We define two matrices $(c_{ij})_{i,j\in [n]}$ (the Cartan matrix of $\widehat{\ssl}_n$) and $(m_{ij})_{i,j\in [n]}$ via:
\begin{equation}\label{eq:am-matrices}
  c_{ii}=2 \,, \quad c_{i,i\pm 1}=-1 \,, \quad m_{i,i \pm 1}=\mp 1 \,,
  \quad \mathrm{and} \quad c_{ij}=0=m_{ij} \quad \mathrm{otherwise}\,.
\end{equation}
Fix $q,d\in \BC^\times$ such that $q,qd^{\pm 1}$ are not roots of unity. Given $\unl{b}^\pm=\{b^\pm_i\}_{i\in [n]}\in \BZ^{[n]}$, we define
the \emph{shifted quantum toroidal algebra of $\ssl_n$}, denoted by $\ddot{U}^{(\unl{b}^+,\unl{b}^-)}_{q,d}$, to be the associative $\BC$-algebra
generated by $\{e_{i,r},f_{i,r},\psi^\pm_{i,\pm s^\pm_i},(\psi^\pm_{i,\mp b^\pm_i})^{-1}\}_{i\in [n]}^{r\in \BZ, s^\pm_i\geq -b^\pm_i}$
with the following defining relations (for all $i,j\in [n]$ and $\epsilon,\epsilon'\in \{\pm\}$):
\begin{equation}\tag{T1}\label{T1}
  [\psi_i^\epsilon(z),\psi_j^{\epsilon'}(w)]=0 \,,\quad
  \psi^\pm_{i,\mp b^\pm_i}\cdot (\psi^\pm_{i,\mp b^\pm_i})^{-1}=
  (\psi^\pm_{i,\mp b^\pm_i})^{-1}\cdot \psi^\pm_{i,\mp b^\pm_i}=1 \,,
\end{equation}
\begin{equation}\tag{T2}\label{T2}
  (d^{m_{ij}}z-q^{c_{ij}}w)e_i(z)e_j(w)=(q^{c_{ij}}d^{m_{ij}}z-w)e_j(w)e_i(z) \,,
\end{equation}
\begin{equation}\tag{T3}\label{T3}
  (q^{c_{ij}}d^{m_{ij}}z-w)f_i(z)f_j(w)=(d^{m_{ij}}z-q^{c_{ij}}w)f_j(w)f_i(z) \,,
\end{equation}
\begin{equation}\tag{T4}\label{T4}
  (d^{m_{ij}}z-q^{c_{ij}}w)\psi^\epsilon_i(z)e_j(w)=(q^{c_{ij}}d^{m_{ij}}z-w)e_j(w)\psi^\epsilon_i(z) \,,
\end{equation}
\begin{equation}\tag{T5}\label{T5}
  (q^{c_{ij}}d^{m_{ij}}z-w)\psi^\epsilon_i(z)f_j(w)=(d^{m_{ij}}z-q^{c_{ij}}w)f_j(w)\psi^\epsilon_i(z) \,,
\end{equation}
\begin{equation}\tag{T6}\label{T6}
  [e_i(z),f_j(w)]=\frac{\delta_{ij}}{q-q^{-1}}\delta\left(\frac{z}{w}\right)\left(\psi^+_i(z)-\psi^-_i(z)\right) \,,
\end{equation}
\begin{equation}\tag{T7}\label{T7}
  \underset{z_1,z_2}{\Sym}\, \Big(e_i(z_1)e_i(z_2)e_{i\pm 1}(w)-(q+q^{-1})e_i(z_1)e_{i\pm 1}(w)e_i(z_2)+e_{i\pm 1}(w)e_i(z_1)e_i(z_2)\Big) =\, 0 \,,
\end{equation}
\begin{equation}\tag{T8}\label{T8}
  \underset{z_1,z_2}{\Sym}\, \Big(f_i(z_1)f_i(z_2)f_{i\pm 1}(w)-(q+q^{-1})f_i(z_1)f_{i\pm 1}(w)f_i(z_2)+f_{i\pm 1}(w)f_i(z_1)f_i(z_2)\Big) =\, 0 \,,
\end{equation}
where the generating series $\{e_i(z),f_i(z),\psi^\pm_i(z)\}_{i\in [n]}$ are defined as in~\eqref{eq:generating series}.

The algebras $\ddot{U}^{(\unl{b}^+,\unl{b}^-)}_{q,d}$ and $\ddot{U}^{(0,\unl{b}^+ + \unl{b}^-)}_{q,d}$ are naturally isomorphic
for any $\unl{b}^\pm\in \BZ^{[n]}$. Thus, we do not lose generality by considering only $\ddot{U}^{(0,\unl{b})}_{q,d}$, which
will be denoted by~$\ddot{U}^{(\unl{b})}_{q,d}$ for simplicity.
The original quantum toroidal algebra $\ddot{U}_{q,d}(\ssl_n)$ is isomorphic to $\ddot{U}^{(0,0)}_{q,d}/(\psi^+_{i,0}\psi^-_{i,0}-1)_{i\in [n]}$.


\subsection{GKLO-type homomorphisms}\label{ssec gklo-homomorphisms toroidal-n}
\

Fix $\unl{b}\in \BZ^{[n]}$ and let $\unl{a}\in \BN^{[n]}$ be such that $N_i:=b_i+2a_i-a_{i-1}-a_{i+1}\geq 0$
for all $i\in [n]$ (in particular, existence of such $\unl{a}$ forces $\sum_{i\in [n]} b_i\geq 0$).
We pick $\unl{\sz}=(\{\sz_{i,r}\}_{i\in [n]}^{1\leq r\leq N_i})$ with $\sz_{i,r}\in \BC^\times$,
as well as an orientation of the cyclic quiver $\mathrm{Dyn}(\widehat{\ssl}_n)$ with the vertex set $[n]$
and the vertex $i$ connected to the vertices $i+1,i-1$. We define the $\BC$-algebra $\wt{\CA}^q$ as
in Section~\ref{ssec gklo-homomorphisms} (note that we omit the subscript ``$\fra$'' as it is now a $\BC$-algebra),
and follow the notation~\eqref{eq:ZW-notations}.

Then, we have the following analogue of Proposition~\ref{prop:homomorphism}:

\begin{Prop}\label{prop:homomorphism toroidal-n}
There exists a unique $\BC$-algebra homomorphism
\begin{equation}\label{eq:GKLO-homom toroidal-n}
  \wt{\Phi}^{\unl{a},\unl{\sz}}_{\unl{b}} \colon \ddot{U}^{(\unl{b})}_{q,d} \longrightarrow \wt{\CA}^q
\end{equation}
such that
\begin{equation}\label{eq:GKLO-assignment toroidal-n}
\begin{split}
  & e_i(z)\mapsto \frac{-q}{1-q^2}
    \prod_{t=1}^{a_i}\sw_{i,t} \prod_{j\to i} \prod_{t=1}^{a_j} \sw_{j,t}^{-1/2}\cdot
    \sum_{r=1}^{a_i} \delta\left(\frac{\sw_{i,r}}{z}\right)\frac{\sZ_i(\sw_{i,r})}{W_{i,r}(\sw_{i,r})}
    \prod_{j\to i} W_j(q^{-1}d^{m_{ij}}z) D_{i,r}^{-1} \,, \\
  & f_i(z)\mapsto \frac{1}{1-q^2} \prod_{j\leftarrow i} \prod_{t=1}^{a_j} \sw_{j,t}^{-1/2}\cdot
    \sum_{r=1}^{a_i} \delta\left(\frac{q^2\sw_{i,r}}{z}\right)\frac{1}{W_{i,r}(\sw_{i,r})}
    \prod_{j\leftarrow i} W_j(q^{-1}d^{m_{ij}}z)D_{i,r} \,, \\
  & \psi^\pm_i(z)\mapsto \prod_{t=1}^{a_i}\sw_{i,t} \prod_{j - i} \prod_{t=1}^{a_j} \sw_{j,t}^{-1/2}\cdot
    \left(\frac{\sZ_i(z)}{W_i(z)W_i(q^{-2}z)}
    \prod_{j - i} W_j(q^{-1}d^{m_{ij}}z)\right)^\pm \,. \\
\end{split}
\end{equation}
As before, $\gamma(z)^\pm$ denotes the expansion of a rational function $\gamma(z)$ in $z^{\mp 1}$, respectively.
\end{Prop}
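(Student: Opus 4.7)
The plan is to verify uniqueness and compatibility with each defining relation~(\ref{T1})--(\ref{T8}). Uniqueness is immediate since the prescribed images pin down $\wt{\Phi}^{\unl{a},\unl{\sz}}_{\unl{b}}$ on a generating set. For existence, I will split the verification following the strategy outlined in Remark~\ref{rem:shuffle simplifies gklo}.

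Relation~(\ref{T1}) is trivial: the images of $\psi^\pm_i(z)$ are rational functions in the mutually commuting $\sw$-variables, hence pairwise commute. Relations~(\ref{T4}) and~(\ref{T5}) will reduce to a direct calculation based on the commutation $D_{i,r}\sw_{j,s}=q^{\delta_{ij}\delta_{rs}}\sw_{j,s}D_{i,r}$: passing $D_{j,r}^{\mp 1}$ through the image of $\psi^\epsilon_i(z)$ produces the substitution $\sw_{j,r}\mapsto q^{\mp 1}\sw_{j,r}$ inside the single affected factor, and an elementary check matches this with the rational identity in~(\ref{T4}),(\ref{T5}) once $w$ is specialized to the support of the corresponding $\delta$-function.

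The compatibility with the purely positive relations~(\ref{T2}),(\ref{T7}) and the purely negative relations~(\ref{T3}),(\ref{T8}) is exactly what Theorem~\ref{thm:shuffle homomorphism toroidal-n} will buy for free. Granting that $\widehat{\Phi}^{\unl{a},\unl{\sz}}_{\unl{b}}$ defines an algebra homomorphism from the larger shuffle spaces $\BS^{[n]}$ and $\BS^{[n],\op}$, and pre-composing with the shuffle realizations $\Upsilon$ of the positive and negative subalgebras (the toroidal-$\ssl_n$ counterpart of Proposition~\ref{thm:shuffle iso}), the images of the $e_i$'s (resp.\ $f_i$'s) in $\wt{\CA}^q$ satisfy the same relations as in the corresponding subalgebras, namely~(\ref{T2}),(\ref{T7}) (resp.~(\ref{T3}),(\ref{T8})), since the defining shuffle factor $\zeta_{ij}$ encodes precisely those relations.

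The hard part will be the mixed relation~(\ref{T6}), i.e.\ the $[e_i(z),f_j(w)]$ identity. For $i\ne j$ with $c_{ij}=0$ (i.e.\ $j\notin\{i,i\pm 1\}$), the images $\wt\Phi(e_i(z))$ and $\wt\Phi(f_j(w))$ involve disjoint $D$-operators and mutually commuting $\sw$-variables, so the double sum collapses term-by-term. For $i\ne j$ with $c_{ij}=-1$ (i.e.\ $j=i\pm 1$), the $W_j$-factors appearing in one of $\wt\Phi(e_i(z)),\wt\Phi(f_j(w))$ introduce zeros at $w=q^{-1}d^{m_{ij}}z$ that cause the off-diagonal contributions to pair up and cancel. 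For $i=j$, only the diagonal $r=s$ summands contribute nontrivially on the $\delta$-support $\sw_{i,r}=z=w$, and collecting them yields
\[
  \frac{1}{q-q^{-1}}\,\delta\!\left(\frac{z}{w}\right)\sum_{r=1}^{a_i}\delta\!\left(\frac{\sw_{i,r}}{z}\right)\mathrm{res}_{z=\sw_{i,r}}\gamma_i(z)\cdot \sw_{i,r}^{-1},
\]
where $\gamma_i(z)$ denotes the rational function inside the $(\cdot)^\pm$ expansion of $\wt\Phi(\psi^\pm_i(z))$. The standard delta-residue identity (cf.~\cite[Lemma~C.1, \S C(vi)]{ft1}) then rewrites the inner sum as $\wt\Phi(\psi^+_i(z))-\wt\Phi(\psi^-_i(z))$, completing the verification.
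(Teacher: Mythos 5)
Your strategy is exactly the one the paper intends: the proposition is stated without proof as the analogue of Proposition~\ref{prop:homomorphism} (quoted there from~\cite{ft1}), and Remark~\ref{rem:shuffle simplifies gklo} spells out precisely your division of labor --- the shuffle homomorphism handles the quadratic and Serre relations among the $e$'s and among the $f$'s, the Cartan-type relations are elementary, and the mixed relation is settled by the delta-function decomposition of $\gamma(z)^+-\gamma(z)^-$. Your use of the first part of Theorem~\ref{thm:shuffle homomorphism toroidal-n} is not circular, since the statement that~\eqref{eq:explicit shuffle homom 1 toroidal-n} respects $\star$ is independent of the present proposition; one only has to check separately that the degree-one case of that formula reproduces the prescribed image of $e_{i,r}$.

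Three details in your sketch are off and would derail the ``elementary check'' if taken literally. First, in $\wt{\CA}^q$ the relation is imposed on $\sw^{1/2}$, namely $D_{i,r}\sw_{j,s}^{1/2}=q^{\delta_{ij}\delta_{rs}}\sw_{j,s}^{1/2}D_{i,r}$, so conjugating by $D_{i,r}^{\mp 1}$ substitutes $\sw_{i,r}\mapsto q^{\mp 2}\sw_{i,r}$, not $q^{\mp 1}$; this is also what makes the supports of $e_i(z)$ at $z=\sw_{i,r}$ and of $f_i(w)$ at $w=q^2\sw_{i,r}$ collide on the diagonal. Second, for adjacent $i\ne j$ the vanishing of $[e_i(z),f_j(w)]$ is not caused by zeros of $W_j$-factors pairing off-diagonal terms: each individual $(r,s)$-summand already satisfies $E_rF_s=F_sE_r$, because the factors $q^{\pm 1}$ produced by conjugating the $\prod_t\sw_{j,t}^{-1/2}$ prefactors exactly offset the $q$-shift of the single affected linear factor of $W_j(q^{-1}d^{m_{ij}}z)$ (resp.\ $W_i(q^{-1}d^{m_{ji}}w)$) evaluated on the $\delta$-supports; when the edge is oriented the other way no common variables occur at all. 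Third, in the $i=j$ case the $f_i(w)e_i(z)$ ordering contributes delta functions supported at $z=q^2\sw_{i,r}$, matching the second family of poles of $\gamma_i(z)$ coming from $W_i(q^{-2}z)$; your displayed formula records only the poles at $z=\sw_{i,r}$, whereas the residue expansion of~\cite[Lemma~C.1]{ft1} must be taken over both families for the identity with $\psi^+_i(z)-\psi^-_i(z)$ to close.
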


\begin{Rem}
We note that the unshifted case $\unl{b}=\unl{0}$ corresponds to $a_0=a_1=\ldots=a_{n-1}$.
\end{Rem}


\subsection{Shuffle algebra realization of the positive and negative subalgebras}\label{ssec shuffle algebra toroidal-n}
\

Similar to~(\ref{eq:halves isomorphism},~\ref{eq:positive-vs-negative},~\ref{eq:isomorphisms toroidal-1}),
we have the following algebra isomorphisms:
\begin{equation}\label{eq:isomorphisms toroidal-n}
  \ddot{U}^{(\unl{b}),>}_{q,d} \,\iso\, \ddot{U}^>_{q,d}(\ssl_n) \,,\quad
  \ddot{U}^{(\unl{b}),<}_{q,d} \,\iso\, \ddot{U}^<_{q,d}(\ssl_n) \,,\quad
  \ddot{U}^<_{q,d}(\ssl_n) \,\iso\, \ddot{U}^>_{q,d}(\ssl_n)^{\op} \,,
\end{equation}
with the subalgebras
  $\ddot{U}^{(\unl{b}),>}_{q,d}, \ddot{U}^>_{q,d}(\ssl_n), \ddot{U}^{(\unl{b}),<}_{q,d}(\ssl_n), \ddot{U}^<_{q,d}(\ssl_n)$
defined in a self-explaining way.

\medskip
\noindent
Consider an $\BN^{[n]}$-graded $\BC$-vector space
  $\BS^{[n]}\ =\underset{\underline{k}=(k_i)_{i\in [n]}\in \BN^{[n]}} \bigoplus\BS^{[n]}_{\underline{k}}$,
with the graded components
\begin{equation}\label{pole conditions toroidal-n}
  \BS^{[n]}_{\unl{k}} = \left\{
  F=\frac{f(\{x_{i,r}\}_{i\in [n]}^{1\leq r\leq k_i})}{\prod_{i\in [n]} \prod_{r\leq k_i}^{s\leq k_{i+1}}(x_{i,r}-x_{i+1,s})} \,\Big|\,
  f\in \BC\Big[\{x_{i,r}^{\pm 1}\}_{i\in [n]}^{1\leq r\leq k_i}\Big]^{S_{\unl{k}}}\right\} \,,
\end{equation}
where $S_{\unl{k}}:=\prod_{i\in [n]} S(k_i)$. We also fix rational functions $\{\zeta_{ij}(z)\}_{i,j\in [n]}$ via:
\begin{equation}\label{eq:shuffle factor toroidal-n}
\begin{split}
  & \zeta_{i,i+1}\left(\frac{z}{w}\right)=\frac{d^{-1}z-qw}{z-w} \,,\quad
    \zeta_{i,i-1}\left(\frac{z}{w}\right)=\frac{z-qd^{-1}w}{z-w} \,, \\
  & \zeta_{ii}\left(\frac{z}{w}\right)=\frac{z-q^{-2}w}{z-w} \,,\quad
    \zeta_{ij}\left(\frac{z}{w}\right)=1 \quad \mathrm{if} \quad j\ne i,i\pm 1 \,.
\end{split}
\end{equation}
The bilinear \emph{shuffle product} $\star$ on $\BS^{[n]}$ is defined completely analogously to~\eqref{shuffle product},
thus endowing $\BS^{[n]}$ with a structure of an associative unital $\BC$-algebra. As before, we are interested in an $\BN^{[n]}$-graded subspace
of $\BS^{[n]}$ defined by the following \emph{wheel conditions}:
\begin{equation}\label{eq:wheel conditions toroidal-n}
  F\left(\{x_{i,r}\}\right)=0 \ \ \mathrm{once} \ \
  x_{i,2}=q^2 x_{i,1} \ \mathrm{and}\ x_{i+\epsilon,1}=qd^{-\epsilon}x_{i,1}\ \ \mathrm{for}\ \ i\in [n] \,,\ \epsilon=\pm 1 \,.
\end{equation}
Let $S^{[n]}\subset \BS^{[n]}$ denote the subspace of all such elements $F$, which is easily seen to be $\star$-closed. The resulting
\emph{shuffle algebra} $\left(S^{[n]},\star\right)$ is related to $\ddot{U}_{q,d}(\ssl_n)$ via the following result of~\cite{n2}:

\begin{Prop}\cite{n2}\label{thm:shuffle iso tootidal-n}
The assignments $e_{i,r}\mapsto x_{i,1}^r$ and $f_{i,r}\mapsto x_{i,1}^r$ for $i\in [n],r\in \BZ$
give rise to $\BC$-algebra isomorphisms
\begin{equation}\label{eq:Upsilon toroidal-n}
  \Upsilon\colon \ddot{U}^>_{q,d}(\ssl_n) \,\iso\, S^{[n]}
  \qquad \mathrm{and} \qquad
  \Upsilon\colon \ddot{U}^<_{q,d}(\ssl_n) \,\iso\, S^{[n],\op} \,.
\end{equation}
\end{Prop}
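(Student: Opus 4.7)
The plan is the standard three-step program for establishing a shuffle-algebra isomorphism: well-definedness of $\Upsilon$ as an algebra map, verification that its image lies inside the wheel subspace $S^{[n]}$, and bijectivity onto $S^{[n]}$. I focus on $\Upsilon\colon \ddot{U}^>_{q,d}(\ssl_n)\iso S^{[n]}$; the negative counterpart follows by composing with the antiautomorphism $f_{i,r}\mapsto e_{i,r}$ of~(\ref{eq:isomorphisms toroidal-n}).

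First, I would extend $e_{i,r}\mapsto x_{i,1}^r$ to the free tensor algebra on the $e_{i,r}$ and check vanishing on the relations~(\ref{T2}) and~(\ref{T7}). For~(\ref{T2}), direct evaluation of $x_{i,1}^a\star x_{j,1}^b - x_{j,1}^b\star x_{i,1}^a$ produces exactly the rational coefficient dictated by the explicit $\zeta_{ij}$ in~(\ref{eq:shuffle factor toroidal-n}), so the quadratic relations hold as equalities of formal generating series. For the Serre relations~(\ref{T7}), the symmetrization appearing in the shuffle product factors out an antisymmetric rational function whose zero set is precisely the wheel locus~(\ref{eq:wheel conditions toroidal-n}); the remaining combination has no poles compatible with~(\ref{pole conditions toroidal-n}) and is therefore zero. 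Next, I would check that any shuffle monomial in the $x_{i,1}^r$ satisfies~(\ref{eq:wheel conditions toroidal-n}), by induction on total degree: the specialization of the relevant cluster of $\zeta$-factors produces a double zero that annihilates the symmetrization. This gives a well-defined algebra homomorphism $\Upsilon\colon \ddot{U}^>_{q,d}(\ssl_n)\to S^{[n]}$.

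For bijectivity, the plan is to compare $\BN^{[n]}$-graded dimensions. Injectivity is handled by exhibiting a PBW-type basis of $\ddot{U}^>_{q,d}(\ssl_n)$ built from real and imaginary root vectors and showing that the $\Upsilon$-images of these basis elements admit a triangular leading term with respect to a suitable lexicographic order on shuffle monomials; this is essentially verbatim from the $\ssl_n$-affine argument, extended to the toroidal setting using the cyclic Dynkin structure. Surjectivity is the main technical step. Following the strategy of~\cite{n2}, I would introduce ``specialization maps'' $\varrho_\pi\colon S^{[n]}_{\unl{k}}\to S^{[n]}_{\unl{k}'}$ indexed by compositions $\pi$ of $\unl{k}$, obtained by evaluating successive clusters of variables along wheel-type configurations $x_{i,r}\mapsto q^{2(p-1)}w_i$. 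The wheel conditions ensure that such specializations land in a strictly smaller graded piece, which enables an induction on $|\unl{k}|$: given $F\in S^{[n]}_{\unl{k}}$ with a distinguished leading term, one subtracts an explicit $\Upsilon(u)$ matching that term and iterates on the remainder.

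The hard part will be surjectivity. The delicate input is to show that the degeneration of the $\zeta$-factors along each wheel is tight enough that the image of $\Upsilon$ exhausts $S^{[n]}$ and nothing is missed by the induction; equivalently, that the Hilbert series of $S^{[n]}$ matches that of $\ddot{U}^>_{q,d}(\ssl_n)$ in each $\unl{k}$-degree. The cyclic structure of the $\ssl_n$ Dynkin diagram and the presence of the extra parameter $d$ in~(\ref{eq:shuffle factor toroidal-n}) make the book-keeping of Negut's specialization argument more subtle than in the quantum affine case of Proposition~\ref{thm:shuffle iso}, but the overall structure of the proof carries through essentially unchanged.
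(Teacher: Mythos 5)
The paper does not prove this statement: it is quoted directly from Negu\c{t}'s work~\cite{n2} (as the citation in the proposition header indicates), with no argument given. Your proposal is therefore not being compared against a proof in the paper but against an external reference.

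That said, your outline is a reasonable summary of the structure of Negu\c{t}'s argument, and the three-step program (well-definedness, wheel conditions on the image, bijectivity via specialization maps) is indeed how~\cite{n2} proceeds. A few remarks on accuracy. The deduction of the negative case from the positive one via $U^<\iso U^{>,\op}$, $f_{i,r}\mapsto e_{i,r}$, together with the compatible opposite structure $S^{[n],\op}$, is correct and matches how the paper sets things up. For well-definedness, the verification that $x_{i,1}^a\star x_{j,1}^b-\zeta$-twisted $x_{j,1}^b\star x_{i,1}^a$ reproduces~(\ref{T2}) is routine, but for~(\ref{T7}) the relevant point is a vanishing of a specific symmetrized combination after clearing denominators, not that the ``antisymmetric rational function\ldots is therefore zero'' in the way you phrase it — some care is needed because the symmetrization already occurs inside the shuffle product. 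For the wheel conditions you should take into account that the specialization~(\ref{eq:wheel conditions toroidal-n}) involves the parameter $d$ (through the factor $qd^{\pm 1}$), and correspondingly the zeroes come from the numerators of $\zeta_{ii}$ and $\zeta_{i,i\pm1}$ in~(\ref{eq:shuffle factor toroidal-n}); your phrase ``$x_{i,r}\mapsto q^{2(p-1)}w_i$'' for the specialization maps omits the $d$-twists that appear in the actual cyclic-quiver specializations. Finally, for bijectivity, Negu\c{t}'s argument is not a verbatim transcription of the affine $\ssl_n$ case — the cyclic structure and the imaginary root directions in the toroidal algebra require an explicit analysis of specialization maps and their kernels that is considerably more intricate — so characterizing it as ``essentially unchanged'' undersells the difficulty of the surjectivity step, which is precisely where the substance of~\cite{n2} lies. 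None of this is wrong at the level of strategy, but since the statement is cited rather than proved in the paper, what is really needed is simply the reference.
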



\subsection{Shuffle algebra realization of the GKLO-type homomorphisms}\label{ssec shuffle GKLO toroidal-n}
\

For any $i\in [n]$ and $1\leq r\leq a_i$, we define:
\begin{equation}\label{eq:Y-factors toroidal-n}
\begin{split}
  & Y_{i,r}(z):=\frac{1}{q-q^{-1}} \prod_{t=1}^{a_i}\sw_{i,t} \prod_{j\to i} \prod_{t=1}^{a_j} \sw_{j,t}^{-1/2}\cdot
    \frac{\sZ_i(z)\prod_{j\to i} W_{j}(z q^{-1}d^{m_{ij}})}{W_{i,r}(z)} \,, \\
  & Y'_{i,r}(z):=\frac{1}{1-q^2}\prod_{j\leftarrow i}\prod_{t=1}^{a_j} \sw_{j,t}^{-1/2}\cdot
    \frac{\prod_{j\leftarrow i}  W_j(zq^{-1}d^{m_{ij}})}{W_{i,r}(zq^{-2})} \,.
\end{split}
\end{equation}
Define the $\BC$-algebra $\wt{\CA}^{q,'}$ as the further localization of $\wt{\CA}^{q}$ by the multiplicative set
generated by $\{d^{m_{ij}}q^{c_{ij}}\sw_{i,r}-q^{2m}\sw_{j,s}\}_{j=i\pm 1, m\in \BZ}^{r\leq a_i, s\leq a_j}$.
We note that $\wt{\CA}^{q}$ is naturally embedded into $\wt{\CA}^{q,'}$.

The following is our key result and is proved completely analogously to Theorem~\ref{thm:shuffle homomorphism}:

\begin{Thm}\label{thm:shuffle homomorphism toroidal-n}
(a) The assignment
\begin{equation}\label{eq:explicit shuffle homom 1 toroidal-n}
\begin{split}
  & \BS^{[n]}_{\unl{k}} \ni E\mapsto
    q^{\sum_{i\in [n]} (k_i-k_i^2)}\,\times \\
  & \sum_{\substack{m^{(i)}_1+\ldots+m^{(i)}_{a_i}=k_i\\ m^{(i)}_r\in \BN \ \forall\, i\in [n]}}
    \left\{\prod_{i\in [n]} \prod_{r=1}^{a_i} \prod_{p=1}^{m^{(i)}_r} Y_{i,r}\Big(\sw_{i,r} q^{-2(p-1)}\Big)\cdot
    E\left(\Big\{\sw_{i,r} q^{-2(p-1)}\Big\}_{i\in [n], 1\leq r\leq a_i}^{1\leq p\leq m^{(i)}_r}\right)\times\right.\\
  & \left.
    \prod_{i\in [n]} \prod_{1\leq r\leq a_i} \prod_{1\leq p_1<p_2\leq m^{(i)}_r}
    \zeta^{-1}_{ii}\Big(\sw_{i,r} q^{-2(p_1-1)} \Big/ \sw_{i,r} q^{-2(p_2-1)}\Big)\, \times\right.\\
  & \left.
    \prod_{i\in [n]} \prod_{1\leq r_1\neq r_2\leq a_i} \prod_{1\leq p_1\leq m^{(i)}_{r_1}}^{1\leq p_2\leq m^{(i)}_{r_2}}
    \zeta^{-1}_{ii}\Big(\sw_{i,r_1} q^{-2(p_1-1)} \Big/ \sw_{i,r_2} q^{-2(p_2-1)}\Big)\, \times\right.\\
  & \left.
    \prod_{j\to i}\prod_{1\leq r_1\leq a_{i}}^{1\leq r_2\leq a_{j}} \prod_{1\leq p_1\leq m^{(i)}_{r_1}}^{1\leq p_2\leq m^{(j)}_{r_2}}
    \zeta^{-1}_{ij}\Big( \sw_{i,r_1} q^{-2(p_1-1)} \Big/ \sw_{j,r_2} q^{-2(p_2-1)}\Big) \cdot\,
    \prod_{i\in [n]} \prod_{r=1}^{a_i} D_{i,r}^{-m^{(i)}_r}\right\}
\end{split}
\end{equation}
gives rise to the algebra homomorphism
\begin{equation}\label{eq:Psi-tilde + toroidal-n}
  \widehat{\Phi}^{\unl{a},\unl{\sz}}_{\unl{b}}\colon \BS^{[n]}\longrightarrow \wt{\CA}^{q,'} \,.
\end{equation}
Moreover, the composition
\begin{equation}\label{eq:homom extension 1 toroidal-n}
  \ddot{U}^{(\unl{b}),>}_{q,d} \, \overset{\eqref{eq:isomorphisms toroidal-n}}{\iso}\, \ddot{U}^>_{q,d}(\ssl_n)\,
  \overset{\Upsilon}{\iso}\, S^{[n]} \overset{\widehat{\Phi}^{\unl{a},\unl{\sz}}_{\unl{b}}}{\longrightarrow} \wt{\CA}^{q,'}
\end{equation}
coincides with the restriction of the homomorphism $\wt{\Phi}^{\unl{a},\unl{\sz}}_{\unl{b}}$ of~\eqref{eq:GKLO-homom toroidal-n}
to the subalgebra $\ddot{U}^{(\unl{b}),>}_{q,d}$.
In~particular, the image of $\ddot{U}^{(\unl{b}),>}_{q,d}$ under the composition~\eqref{eq:homom extension 1 toroidal-n}
is in the subalgebra $\wt{\CA}^{q}$ of $\wt{\CA}^{q,'}$.

\medskip
\noindent
(b) The assignment
\begin{equation}\label{eq:explicit shuffle homom 2 toroidal-n}
\begin{split}
  & \BS^{[n],\op}_{\unl{k}} \ni F\mapsto \\
  & \sum_{\substack{m^{(i)}_1+\ldots+m^{(i)}_{a_i}=k_i\\ m^{(i)}_r\in \BN \ \forall\, i\in [n]}}
    \left\{\prod_{i\in [n]} \prod_{r=1}^{a_i} \prod_{p=1}^{m^{(i)}_r} Y'_{i,r}\Big(\sw_{i,r} q^{2p}\Big)\cdot
    F\left(\Big\{\sw_{i,r} q^{2p}\Big\}_{i\in [n],1\leq r\leq a_i}^{1\leq p\leq m^{(i)}_r}\right)\times\right.\\
  & \left.
    \prod_{i\in [n]} \prod_{1\leq r\leq a_i} \prod_{1\leq p_1<p_2\leq m^{(i)}_r}
    \zeta^{-1}_{ii}\Big(\sw_{i,r} q^{2p_2} \Big/ \sw_{i,r} q^{2p_1}\Big)\, \times\right.\\
  & \left.
    \prod_{i\in [n]} \prod_{1\leq r_1\neq r_2\leq a_i} \prod_{1\leq p_1\leq m^{(i)}_{r_1}}^{1\leq p_2\leq m^{(i)}_{r_2}}
    q^{-1}\zeta^{-1}_{ii}\Big(\sw_{i,r_2} q^{2p_2} \Big/ \sw_{i,r_1} q^{2p_1}\Big)\, \times\right.\\
  & \left.
    \prod_{j\leftarrow i}\prod_{1\leq r_1\leq a_{i}}^{1\leq r_2\leq a_{j}} \prod_{1\leq p_1\leq m^{(i)}_{r_1}}^{1\leq p_2\leq m^{(j)}_{r_2}}
    \zeta^{-1}_{ji}\Big( \sw_{j,r_2} q^{2p_2} \Big/ \sw_{i,r_1} q^{2p_1} \Big) \cdot\,
    \prod_{i\in [n]} \prod_{r=1}^{a_i} D_{i,r}^{m^{(i)}_r}\right\}
\end{split}
\end{equation}
gives rise to the algebra homomorphism
\begin{equation}\label{eq:Psi-tilde - toroidal-n}
  \widehat{\Phi}^{\unl{a},\unl{\sz}}_{\unl{b}}\colon \BS^{[n],\op}\longrightarrow \wt{\CA}^{q,'} \,.
\end{equation}
Moreover, the composition
\begin{equation}\label{eq:homom extension 2 toroidal-n}
  \ddot{U}^{(\unl{b}),<}_{q,d} \, \overset{\eqref{eq:isomorphisms toroidal-n}}{\iso}\, \ddot{U}^<_{q,d}(\ssl_n)\,
  \overset{\Upsilon}{\iso}\, S^{[n],\op} \overset{\widehat{\Phi}^{\unl{a},\unl{\sz}}_{\unl{b}}}{\longrightarrow} \wt{\CA}^{q,'}
\end{equation}
coincides with the restriction of the homomorphism $\wt{\Phi}^{\unl{a},\unl{\sz}}_{\unl{b}}$ of~\eqref{eq:GKLO-homom toroidal-n}
to the subalgebra $\ddot{U}^{(\unl{b}),<}_{q,d}$.
In~particular, the image of $\ddot{U}^{(\unl{b}),<}_{q,d}$ under the composition~\eqref{eq:homom extension 2 toroidal-n}
is in the subalgebra $\wt{\CA}^{q}$ of $\wt{\CA}^{q,'}$.
\end{Thm}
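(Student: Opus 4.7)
The plan is to follow the strategy of the proof of Theorem~\ref{thm:shuffle homomorphism} verbatim, since the toroidal $\ssl_n$ setting differs from the affine case only through the specific structure constants $c_{ij},\ m_{ij},\ \zeta_{ij}$, while the algebraic mechanics is identical. For part~(a), denote the right-hand side of~\eqref{eq:explicit shuffle homom 1 toroidal-n} by $\widehat{\Phi}(E)$, suppressing the super/subscripts for brevity. The central step is to verify the multiplicativity
\[
  \widehat{\Phi}(E \star E') \,=\, \widehat{\Phi}(E)\,\widehat{\Phi}(E')
  \qquad \forall\, E \in \BS^{[n]}_{\unl{k}},\ E' \in \BS^{[n]}_{\unl{\ell}} \,.
\]

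To compute the right-hand side, I would commute the block $\prod_{i,r} D_{i,r}^{-m^{(i)}_r}$ from the first summand of $\widehat{\Phi}(E)$ past all $\sw$-dependent expressions of $\widehat{\Phi}(E')$. Using the relation $D_{i,r}\sw^{1/2}_{j,s} = q^{\delta_{ij}\delta_{rs}}\sw^{1/2}_{j,s} D_{i,r}$ inherited from Section~\ref{ssec gklo-homomorphisms}, each $\sw_{i,r}$ in the second factor is shifted by $q^{-2m^{(i)}_r}$. Consequently, the arguments of $E'$, of $Y_{i,r}$, and of the $\zeta^{-1}$-factors originating from $\widehat{\Phi}(E')$, originally placed at $\sw_{i,r}q^{-2(p-1)}$ for $1 \leq p \leq n^{(i)}_r$, relocate to $\sw_{i,r}q^{-2(p + m^{(i)}_r - 1)}$. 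Together with the arguments of $\widehat{\Phi}(E)$ at $1 \leq p \leq m^{(i)}_r$, they fill precisely the range $1 \leq p \leq M^{(i)}_r$, where $M^{(i)}_r := m^{(i)}_r + n^{(i)}_r$.

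I would then compare this with $\widehat{\Phi}(E \star E')$, whose outer sum is over $(M^{(i)}_r)$ with $\sum_r M^{(i)}_r = k_i + \ell_i$. Writing the symmetrization in~\eqref{shuffle product} as a sum over ordered partitions of the variable set into two clusters of sizes $\unl{k}$ and $\unl{\ell}$, and refining by the compatible splittings $M^{(i)}_r = m^{(i)}_r + n^{(i)}_r$, one matches terms one-to-one. The prefactor $\tfrac{1}{\unl{k}!\,\unl{\ell}!}$ balances multiplicities from intra-cluster symmetries; the $\zeta_{ij}$-factors of the shuffle product combine with the $\zeta^{-1}_{ij}$-factors inside~\eqref{eq:explicit shuffle homom 1 toroidal-n} to yield precisely the cross-cluster $\zeta$-contributions required; and the scalar prefactor splits correctly via $(k_i + \ell_i) - (k_i + \ell_i)^2 = (k_i - k_i^2) + (\ell_i - \ell_i^2) - 2k_i\ell_i$, with the $-2k_i\ell_i$ deficit absorbed by the cross-$\zeta_{ii}$ terms. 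Once multiplicativity is confirmed, the equality of~\eqref{eq:homom extension 1 toroidal-n} with the restriction of $\wt{\Phi}^{\unl{a},\unl{\sz}}_{\unl{b}}$ is immediate: for $\Upsilon(e_{i,r}) = x_{i,1}^r$ the outer sum collapses to a single index $s \in \{1, \ldots, a_i\}$ with $m^{(i)}_s = 1$, and the resulting $\sum_s Y_{i,s}(\sw_{i,s})\, \sw_{i,s}^r\, D_{i,s}^{-1}$ matches the coefficient of $z^{-r}$ in~\eqref{eq:GKLO-assignment toroidal-n} after recognizing that $Y_{i,s}(\sw_{i,s})$ differs from the prefactor of $\wt{\Phi}(e_i(z))$ only by the trivial identification $\tfrac{1}{q-q^{-1}} = \tfrac{-q}{1-q^2}$.

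Part~(b) is entirely parallel: the shift direction is $q^{2p}$ rather than $q^{-2(p-1)}$, the factors $Y'_{i,r}$ and $\zeta_{ji}$ replace $Y_{i,r}$ and $\zeta_{ij}$, and the extra $q^{-1}$ scalars inside~\eqref{eq:explicit shuffle homom 2 toroidal-n} encode the opposite-multiplication bookkeeping. The main obstacle here is neither structural nor conceptual, since the algebra isomorphisms~\eqref{eq:isomorphisms toroidal-n} and the shuffle identifications~\eqref{eq:Upsilon toroidal-n} already reduce everything to a formal verification; rather, it lies in the combinatorial bookkeeping of tracking which $\zeta$-factors cancel and which monomial shifts conspire to produce the correct cross-terms. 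This is why the author is justified in asserting that the proof proceeds \emph{completely analogously} to Theorem~\ref{thm:shuffle homomorphism}.
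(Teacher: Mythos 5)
Your proposal is correct and follows essentially the same approach the paper intends: the paper's proof declares the result is "proved completely analogously to Theorem~\ref{thm:shuffle homomorphism}," whose own proof is a one-line appeal to a "tedious straightforward verification" of multiplicativity followed by a generator check, and your write-up simply supplies the mechanics of that verification (commuting the block $\prod D_{i,r}^{-m^{(i)}_r}$ past the $\sw$-dependent data, matching partitions $M^{(i)}_r = m^{(i)}_r + n^{(i)}_r$ with clustered symmetrizations, balancing the $\zeta$-factors and the scalar $q^{\sum_i(k_i-k_i^2)}$ via $(k_i+\ell_i)-(k_i+\ell_i)^2 = (k_i-k_i^2)+(\ell_i-\ell_i^2)-2k_i\ell_i$, and collapsing the sum for $E=x_{i,1}^r$ to recover~\eqref{eq:GKLO-assignment toroidal-n} via $\frac{1}{q-q^{-1}}=\frac{-q}{1-q^2}$).
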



\subsection{Special difference operators}\label{ssec special diff operators toroidal-n}
\

For any $\unl{k}\in \BN^{[n]}$ and any multisymmetric Laurent polynomial
$g\in \BC(q)\left[\{x^{\pm 1}_{i,r}\}_{i\in [n]}^{r\leq k_i}\right]^{S_{\unl{k}}}$,
consider the following shuffle elements $\wt{E}_{\unl{k}}(g)\in S^{[n]}_{\unl{k}}$:
\begin{equation}\label{eq:important E-elements toroidal-n}
  \wt{E}_{\unl{k}}(g):=\prod_{i\in [n]} \left\{q^{k_i^2-k_i}(q-q^{-1})^{k_i}\right\} \cdot
  \frac{\prod_{i\in [n]}\prod_{1\leq r\ne s\leq k_i} (x_{i,r}-q^{-2}x_{i,s})\cdot g\left(\{x_{i,r}\}_{i\in [n]}^{1\leq r\leq k_i}\right)}
       {\prod_{i\to j} \prod_{r\leq k_i}^{s\leq k_j} (x_{j,s}-x_{i,r})} \,,
\end{equation}
which obviously satisfy the wheel conditions~\eqref{eq:wheel conditions toroidal-n}.
Due to Proposition~\ref{thm:shuffle iso tootidal-n}, $\wt{E}_{\unl{k}}(g)=\Upsilon(\wt{e}_{\unl{k}}(g))$
for unique elements $\wt{e}_{\unl{k}}(g)\in \ddot{U}^{(\unl{b}),>}_{q,d}\simeq \ddot{U}^>_{q,d}(\ssl_n)$, so that
  $\widehat{\Phi}^{\unl{a},\unl{\sz}}_{\unl{b}}(\wt{E}_{\unl{k}}(g))=
   \wt{\Phi}^{\unl{a},\unl{\sz}}_{\unl{b}}(\wt{e}_{\unl{k}}(g))$
by Theorem~\ref{thm:shuffle homomorphism toroidal-n}(a).
We also consider $\wt{F}_{\unl{k}}(g)\in S^{[n],\op}_{\unl{k}}$ defined via:
\begin{equation}\label{eq:important F-elements toroidal-n}
  \wt{F}_{\unl{k}}(g):=\prod_{i\in [n]} \left\{q^{k_i-k_i^2}(1-q^2)^{k_i}\right\} \cdot
  \frac{\prod_{i\in [n]}\prod_{1\leq r\ne s\leq k_i} (x_{i,r}-q^{-2}x_{i,s})\cdot g\left(\{x_{i,r}\}_{i\in [n]}^{1\leq r\leq k_i}\right)}
       {\prod_{i\to j} \prod_{r\leq k_i}^{s\leq k_j} (x_{i,r}-x_{j,s})} \,.
\end{equation}
The following result is established completely analogously to Lemma~\ref{lem:image of important elements}:

\begin{Lem}\label{lem:image of important elements toroidal-n}
(a) For $\wt{E}_{\unl{k}}(g)\in S^{[n]}_{\unl{k}}$ given by~(\ref{eq:important E-elements toroidal-n}), we have:
\begin{equation}\label{eq:image of important E toroidal-n}
\begin{split}
  & \widehat{\Phi}^{\unl{a},\unl{\sz}}_{\unl{b}}(\wt{E}_{\unl{k}}(g))=
    d^{\sum_{i\in[n]} k_ik_{i+1}\delta_{i+1\to i}}
    \prod_{i\in [n]} \Big(\prod_{t=1}^{a_i} \sw_{i,t}\Big)^{k_i-\frac{1}{2}\sum_{j \leftarrow i} k_j} \,\times \\
  & \sum_{\substack{J_i\subset\{1,\ldots,a_i\}\\|J_i|=k_i \ \forall\, i\in [n]}}
    \left(\frac{\prod_{j\to i} \prod_{r\in J_i}^{s\notin J_j} \left(1-\frac{qd^{m_{ji}}\sw_{j,s}}{\sw_{i,r}}\right)}
               {\prod_{i\in [n]} \prod_{r\in J_i}^{s\notin J_i} \left(1-\frac{\sw_{i,s}}{\sw_{i,r}}\right)}
    \cdot g\left(\{\sw_{i,r}\}_{i\in [n]}^{r\in J_i}\right) \,\times \right.\\
  & \left.
    \prod_{i\in [n]} \prod_{r\in J_i} \sZ_i(\sw_{i,r})\cdot
    \prod_{i\in [n]} \Big(\prod_{r\in J_i} \sw_{i,r}\Big)^{k_i-1-\sum_{j\to i}k_j}\cdot
    \prod_{i\in [n]} \prod_{r\in J_i} D_{i,r}^{-1}\right) \,.
\end{split}
\end{equation}

\noindent
(b) For $\wt{F}_{\unl{k}}(g)\in S^{[n],\op}_{\unl{k}}$ given by~(\ref{eq:important F-elements toroidal-n}), we have:
\begin{equation}\label{eq:image of important F toroidal-n}
\begin{split}
  & \widehat{\Phi}^{\unl{a},\unl{\sz}}_{\unl{b}}(\wt{F}_{\unl{k}}(g))=
    d^{\sum_{i\in[n]} k_ik_{i+1}\delta_{i+1\leftarrow i}} q^{-3\sum_{i\in [n]} k_ik_{i+1}}
    \prod_{i\in [n]} \Big(\prod_{t=1}^{a_i} \sw_{i,t}\Big)^{-\frac{1}{2}\sum_{j\to i} k_j} \,\times \\
  & \sum_{\substack{J_i\subset\{1,\ldots,a_i\}\\|J_i|=k_i \ \forall\, i\in [n]}}
    \left(\frac{\prod_{j\leftarrow i} \prod_{r\in J_i}^{s\notin J_j} \left(1-\frac{q^{-1}d^{m_{ji}}\sw_{j,s}}{\sw_{i,r}}\right)}
               {\prod_{i\in [n]} \prod_{r\in J_i}^{s\notin J_i} \left(1-\frac{\sw_{i,s}}{\sw_{i,r}}\right)}
    \cdot g\left(\{q^2\sw_{i,r}\}_{i\in [n]}^{r\in J_i}\right) \,\times \right.\\
  & \left.
    \prod_{i\in [n]} \Big(\prod_{r\in J_i} \sw_{i,r}\Big)^{k_i-1-\sum_{j\leftarrow i}k_j}\cdot
    \prod_{i\in [n]} \prod_{r\in J_i} D_{i,r} \right) \,.
\end{split}
\end{equation}
\end{Lem}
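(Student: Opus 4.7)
The plan is to apply the explicit shuffle-algebra formula~\eqref{eq:explicit shuffle homom 1 toroidal-n} directly to the element $\wt{E}_{\unl{k}}(g)$ of~\eqref{eq:important E-elements toroidal-n}, and then to simplify the resulting sum. This strategy is identical to the one used in the proof of Lemma~\ref{lem:image of important elements}: the entire argument is driven by a single vanishing observation followed by careful bookkeeping of prefactors.

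The key observation is the presence of the factor $P(\{x_{i,r}\}):=\prod_{i\in[n]}\prod_{1\le r\ne s\le k_i}(x_{i,r}-q^{-2}x_{i,s})$ inside $\wt{E}_{\unl{k}}(g)$. When one substitutes $x_{i,r}\mapsto \sw_{i,r}q^{-2(p-1)}$ with $1\le p\le m^{(i)}_r$ according to~\eqref{eq:explicit shuffle homom 1 toroidal-n}, any composition with some $m^{(i)}_r>1$ produces two specialized arguments $\sw_{i,r}q^{-2(p-1)}$ and $\sw_{i,r}q^{-2p}=q^{-2}\cdot\sw_{i,r}q^{-2(p-1)}$, which forces $P$ to vanish. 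Hence only the tuples with $m^{(i)}_r\in\{0,1\}$ contribute, and these are in bijection with subsets $J_i:=\{r:m^{(i)}_r=1\}\subset\{1,\ldots,a_i\}$ of cardinality $k_i$, matching the summation indexing on the right-hand side of~\eqref{eq:image of important E toroidal-n}.

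On each surviving term the remaining factors collapse as follows: the $(q-q^{-1})^{-k_i}$ from $\prod_{r\in J_i}Y_{i,r}(\sw_{i,r})$ in~\eqref{eq:Y-factors toroidal-n} cancels the prefactor $\prod_i(q-q^{-1})^{k_i}$ of $\wt{E}_{\unl{k}}(g)$; the specialized numerator of $P$ cancels against the $\zeta_{ii}^{-1}$ same-colour factors in~\eqref{eq:explicit shuffle homom 1 toroidal-n}; and the denominator $\prod_{i\to j}\prod_{r\le k_i}^{s\le k_j}(x_{j,s}-x_{i,r})$ of $\wt{E}_{\unl{k}}(g)$ combines with the factors $\prod_{j\to i}W_j(\sw_{i,r}q^{-1}d^{m_{ij}})$ from $Y_{i,r}$ and with the cross-vertex $\zeta_{ij}^{-1}$ terms of~\eqref{eq:explicit shuffle homom 1 toroidal-n}, collapsing via~\eqref{eq:shuffle factor toroidal-n} into the numerator $\prod_{j\to i}\prod_{r\in J_i,\,s\notin J_j}(1-qd^{m_{ji}}\sw_{j,s}/\sw_{i,r})$ appearing in~\eqref{eq:image of important E toroidal-n}. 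The powers of $\sw_{i,r}$ are obtained by separating the monomial content of $Y_{i,r}(\sw_{i,r})$ (which contributes a factor $\prod_t\sw_{i,t}\prod_{j\to i}\prod_t\sw_{j,t}^{-1/2}$ and a further $\sw_{i,r}^{\,?}$ from $W_j$) from its rational tail.

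The main obstacle is the extra $d$-dependence introduced by~\eqref{eq:shuffle factor toroidal-n}, which is absent in the affine setting of Section~\ref{sec affine}: the global exponent $d^{\sum_{i\in[n]}k_ik_{i+1}\delta_{i+1\to i}}$ of~\eqref{eq:image of important E toroidal-n} (resp.\ $d^{\sum k_ik_{i+1}\delta_{i+1\leftarrow i}}q^{-3\sum k_ik_{i+1}}$ in~\eqref{eq:image of important F toroidal-n}) arises from tallying the contributions of $d^{m_{ij}}$ across all neighbouring pairs $(i,j)$ and all surviving specializations, while the extra $q^{-1}$ factor inside the $\zeta_{ii}^{-1}$ term of~\eqref{eq:explicit shuffle homom 2 toroidal-n} is what produces the $q^{-3\sum k_ik_{i+1}}$ in part~(b). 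Part~(b) itself is then proved in an entirely analogous manner using~(\ref{eq:explicit shuffle homom 2 toroidal-n}, \ref{eq:important F-elements toroidal-n}, \ref{eq:Y-factors toroidal-n}), with the same factor $P$ killing all compositions with some $m^{(i)}_r>1$ and with the orientation of arrows reversed from $j\to i$ to $j\leftarrow i$ throughout.
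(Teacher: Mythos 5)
Your proposal is correct and takes essentially the same approach as the paper: the paper establishes this lemma completely analogously to Lemma~\ref{lem:image of important elements}, whose entire proof is precisely your key observation that the factor $\prod_{i\in[n]}\prod_{1\leq r\neq s\leq k_i}(x_{i,r}-q^{-2}x_{i,s})$ annihilates every summand of~(\ref{eq:explicit shuffle homom 1 toroidal-n},~\ref{eq:explicit shuffle homom 2 toroidal-n}) having some $m^{(i)}_r>1$, so that the sum over compositions collapses to a sum over cardinality-$k_i$ subsets $J_i\subset\{1,\ldots,a_i\}$. The remaining collection of $\zeta$-, $d$-, and $\sw$-prefactors is treated as a straightforward verification in the paper, exactly as in your write-up.
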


\begin{Ex}\label{rem:horizontal Heisenberg}
Consider the orientation of the cyclic quiver with arrows $i\to i+1\ (i\in [n])$.

\medskip
\noindent
(a) For $p\in [n]$ and $k\geq 1$, consider the degree $\unl{k}=(k,k,\ldots,k)\in \BN^{[n]}$ elements
\begin{equation}\label{eq:Gamma-elements}
\begin{split}
  & \Gamma^0_{p;k}:=
    \wt{E}_{\unl{k}}\left(\prod_{i\in [n]} (x_{i,1}\cdots x_{i,k})^{1+\delta_{i0}-\delta_{ip}}\right) = \\
  & q^{n(k^2-k)}(q-q^{-1})^{nk}\cdot \frac{\prod_{i\in [n]} \prod_{1\leq r\ne s\leq k} (x_{i,r}-q^{-2}x_{i,s})\cdot \prod_{i\in [n]} \prod_{r=1}^k x_{i,r}}
    {\prod_{i\in [n]} \prod_{1\leq r,s\leq k} (x_{i,r}-x_{i-1,s})}\cdot \prod_{r=1}^k \frac{x_{0,r}}{x_{p,r}} \,.
\end{split}
\end{equation}
Their images under $\widehat{\Phi}^{\unl{a},\unl{\sz}}_{\unl{b}}$ of~\eqref{eq:Psi-tilde + toroidal-n}
vanish if $k>\min\{a_i\}$ and otherwise are given by:
\begin{equation}\label{eq:commuting diff op-s 1}
\begin{split}
  & \widehat{\Phi}^{\unl{a},\unl{\sz}}_{\unl{b}}(\Gamma^0_{p;k})=
    \prod_{i\in [n]} \Big(\prod_{t=1}^{a_i} \sw_{i,t}\Big)^{k_i-\frac{1}{2}k_{i+1}} \,\times
    \sum_{\substack{J_i\subset\{1,\ldots,a_i\}\\|J_i|=k \ \forall\, i\in [n]}}
    \left(\frac{\prod_{r\in J_i}^{s\notin J_{i-1}} \left(1-\frac{qd^{-1}\sw_{i-1,s}}{\sw_{i,r}}\right)}
               {\prod_{i\in [n]} \prod_{r\in J_i}^{s\notin J_i} \left(1-\frac{\sw_{i,s}}{\sw_{i,r}}\right)} \,\times \right. \\
  & \left. \prod_{i\in [n]} \prod_{r\in J_i} \sZ_i(\sw_{i,r})\cdot
    \prod_{i\in [n]} \Big(\prod_{r\in J_i} \sw_{i,r}\Big)^{k_i-k_{i-1}+\delta_{i0}-\delta_{ip}}\cdot
    \prod_{i\in [n]} \prod_{r\in J_i} D_{i,r}^{-1}\right) \,.
\end{split}
\end{equation}
Similar to Remark~\ref{rem:Macdonald commutativity}, the difference operators~\eqref{eq:commuting diff op-s 1}
pairwise commute, due to the equality $[\Gamma^0_{p;k},\Gamma^0_{p';k'}]=0$ in the shuffle algebra $S^{[n]}$
established in~\cite[Remark~4.11(a)]{ft0} (the limit case of~\cite[Theorem~3.3]{ft0}, see part~(b) below).
According to~\cite{t,t2}, the elements $\{\Upsilon^{-1}(\Gamma^0_{p;k})\}_{p\in [n]}^{k\geq 1}$
generate the ``positive half of the horizontal'' Heisenberg subalgebra of $\ddot{U}_{q,d}(\ssl_n)$.

\medskip
\noindent
(b) For $\mu\in \BC$, $k\geq 1$, and $\unl{s}=(s_0,s_1,\ldots,s_{n-1})\in (\BC^\times)^n$ satisfying $s_0s_1 \cdots s_{n-1}=1$,
consider:
\begin{multline}\label{eq:Fmu elements}
  F_k^{\mu}(\unl{s}):=
  \wt{E}_{\unl{k}}\left(\prod_{i\in [n]} \left(s_0\cdots s_i \prod_{r=1}^k x_{i,r}-\mu \prod_{r=1}^k x_{i+1,r}\right)\right) =
  q^{n(k^2-k)}(q-q^{-1})^{nk} \,\times \\
  \frac{\prod_{i\in [n]}\prod_{1\leq r\ne s\leq k}(x_{i,r}-q^{-2}x_{i,s})\cdot
          \prod_{i\in [n]}(s_0\cdots s_i \prod_{r=1}^k x_{i,r}-\mu\prod_{r=1}^k x_{i+1,r})}
         {\prod_{i\in [n]} \prod_{1\leq r,s\leq k}(x_{i,r}-x_{i-1,s})} \,.
\end{multline}
Their images under $\widehat{\Phi}^{\unl{a},\unl{\sz}}_{\unl{b}}$ of~\eqref{eq:Psi-tilde + toroidal-n}
vanish if $k>\min\{a_i\}$ and otherwise are given by:
\begin{equation}\label{eq:commuting diff op-s 2}
\begin{split}
  & \widehat{\Phi}^{\unl{a},\unl{\sz}}_{\unl{b}}(F_k^{\mu}(\unl{s}))=
    \prod_{i\in [n]} \Big(\prod_{t=1}^{a_i} \sw_{i,t}\Big)^{k_i-\frac{1}{2}k_{i+1}} \,\times \\
  & \sum_{\substack{J_i\subset\{1,\ldots,a_i\}\\|J_i|=k \ \forall\, i\in [n]}}
    \left(\frac{\prod_{r\in J_i}^{s\notin J_{i-1}} \left(1-\frac{qd^{-1}\sw_{i-1,s}}{\sw_{i,r}}\right)}
               {\prod_{i\in [n]} \prod_{r\in J_i}^{s\notin J_i} \left(1-\frac{\sw_{i,s}}{\sw_{i,r}}\right)}
    \cdot \prod_{i\in [n]} \left(s_0\cdots s_i -\mu\frac{\prod_{r\in J_{i+1}} \sw_{i+1,r}}{\prod_{r\in J_i} \sw_{i,r}}\right) \,\times \right. \\
  & \left. \prod_{i\in [n]} \prod_{r\in J_i} \sZ_i(\sw_{i,r})\cdot
    \prod_{i\in [n]} \Big(\prod_{r\in J_i} \sw_{i,r}\Big)^{k_i-k_{i-1}}\cdot
    \prod_{i\in [n]} \prod_{r\in J_i} D_{i,r}^{-1}\right) \,.
\end{split}
\end{equation}
Similar to part (a), the difference operators~\eqref{eq:commuting diff op-s 2} pairwise commute, due to the equality
$[F_k^{\mu}(\unl{s}),F_{k'}^{\mu'}(\unl{s})]=0$ in the shuffle algebra $S^{[n]}$ established in~\cite[Theorem 3.3]{ft0}.
According to~\cite[Theorem~4.10]{ft0}, we note that the elements $\{\Upsilon^{-1}(F_k^{\mu}(\unl{s}))\}$ in fact generate
the Bethe commutative subalgebra of the ``horizontal'' quantum affine subalgebra $U_q(\widehat{\gl}_n)$ of $\ddot{U}_{q,d}(\ssl_n)$.
\end{Ex}


\section{Generalization to the quantum quiver algebras}\label{sec quiver}

The above constructions admit natural generalizations to the case of quantum algebras associated with quivers
as recently introduced in~\cite{nss} following~\cite{n3}. We shall state the key results, skipping the proofs
when they are similar to those from Chapter~\ref{sec affine}.


\subsection{Shifted quantum algebras associated with quivers}\label{ssec shifted quiver}
\

Let $E$ be a finite quiver, with a vertex set $I$ and an edge set $E$ (here, multiple edges and edge loops are allowed).
Any edge $e$ of $E$ from a vertex $i\in I$ to a vertex $j\in I$ shall be written as $e=\vec{ij}\in E$. We fix
$q\in \BC^\times$ and equip every edge $e\in E$ with a weight $t_e\in \BC^\times$. Furthermore, following~\cite{n3,nss},
we shall make the following assumption (cf.~\cite[Definition 5.2]{nss}):
\begin{equation}\tag{$\dag$}\label{eq:dagger}
  |q|<|t_e|<1 \quad \mathrm{for\ all}\quad e\in E\,.
\end{equation}
We define rational functions $\{\zeta_{ij}(z)\}_{i,j\in I}$ via:
\begin{equation}\label{eq:quiver-zeta}
  \zeta_{ij}\left(\frac{z}{w}\right)=
  \left(\frac{zq^{-1}-w}{z-w}\right)^{\delta_{ij}}
  \prod_{e=\vec{ij}\in E} \left(\frac{1}{t_e}-\frac{z}{w}\right)
  \prod_{e=\vec{ji}\in E} \left(1-\frac{wt_e}{zq}\right) \,.
\end{equation}
Let $\overline{E}$ be the ``double'' of the edge set $E$, i.e.\ there are two edges
$e=\vec{ij},e^\ast=\vec{ji}\in \overline{E}$ for every $e=\vec{ij}\in E$. Note the canonical involution
$e\leftrightarrow e^\ast$ on $\overline{E}$ and extend the notation $t_e$ to $\overline{E}$ via:
\begin{equation}\label{eq:weights-double}
  t_{e^\ast}:=q/t_e \,.
\end{equation}

\noindent
For any $\unl{b}^\pm=\{b^\pm_i\}_{i\in I}\in \BZ^I$, we define the \emph{shifted quantum quiver algebra},
denoted by $U^{(\unl{b}^+,\unl{b}^-)}_Q$, to be the associative $\BC$-algebra generated by
  $\{e_{i,r},f_{i,r},\psi^\pm_{i,\pm s^\pm_i}, (\psi^\pm_{i,\mp b^\pm_i})^{-1}\}_{i\in I}^{r\in \BZ, s^\pm_i\geq -b^\pm_i}$
with the following defining relations (for all $i,j\in I$ and $\epsilon,\epsilon'\in \{\pm\}$):
\begin{equation}\tag{Q1}\label{Q1}
  [\psi_i^\epsilon(z),\psi_j^{\epsilon'}(w)]=0 \,,\quad
  \psi^\pm_{i,\mp b^\pm_i}\cdot (\psi^\pm_{i,\mp b^\pm_i})^{-1}=
  (\psi^\pm_{i,\mp b^\pm_i})^{-1}\cdot \psi^\pm_{i,\mp b^\pm_i}=1 \,,
\end{equation}
\begin{equation}\tag{Q2}\label{Q2}
  \zeta_{ji}\left(\frac{w}{z}\right)e_i(z)e_j(w)=\zeta_{ij}\left(\frac{z}{w}\right)e_j(w)e_i(z) \,,
\end{equation}
\begin{equation}\tag{Q3}\label{Q3}
  \zeta_{ij}\left(\frac{z}{w}\right)f_i(z)f_j(w)=\zeta_{ji}\left(\frac{w}{z}\right)f_j(w)f_i(z) \,,
\end{equation}
\begin{equation}\tag{Q4}\label{Q4}
  \zeta_{ji}\left(\frac{w}{z}\right)\psi^\epsilon_i(z)e_j(w)=\zeta_{ij}\left(\frac{z}{w}\right)e_j(w)\psi^\epsilon_i(z) \,,
\end{equation}
\begin{equation}\tag{Q5}\label{Q5}
  \zeta_{ij}\left(\frac{z}{w}\right)\psi^\epsilon_i(z)f_j(w)=\zeta_{ji}\left(\frac{w}{z}\right)f_j(w)\psi^\epsilon_i(z) \,,
\end{equation}
\begin{equation}\tag{Q6}\label{Q6}
  [e_i(z),f_j(w)]=\delta_{ij}\delta\left(\frac{z}{w}\right)\left(\psi^+_i(z)-\psi^-_i(z)\right) \,,
\end{equation}
and more complicated cubic Serre relations of~\cite[\S5.4]{nss} that shall be omitted for brevity.
Here, the generating series $\{e_i(z),f_i(z),\psi^\pm_i(z)\}_{i\in I}$ are defined as in~\eqref{eq:generating series}.
The original quantum quiver algebra $U_Q$ of~\cite{nss} is isomorphic to $U^{(\unl{0},\unl{0})}_Q/(\psi^+_{i,0}\psi^-_{i,0}-1)_{i\in I}$.


\subsection{GKLO-type homomorphisms}\label{ssec gklo-homomorphisms quiver}
\

Fix $\unl{a}=(a_i)_{i\in I}\in \BN^I$, $\unl{N}=(N_i)_{i\in I}\in \BN^I$, and a collection
$\unl{\sz}=\{\sz_{i,r}\}_{i\in I}^{1\leq r\leq N_i}$ with $\sz_{i,r}\in \BC^\times$.
We define $\sZ_i(z):=\prod_{r=1}^{N_i}\left(1-\frac{\sz_{i,r}}{z}\right)$.
Finally, we consider the following particular $\unl{b}^\pm\in \BZ^I$:
\begin{equation}\label{eq:b via aN}
  b^+_i=\sum_{j\in I} a_j\cdot \#\left\{e=\vec{ij}\in E\right\}-a_i \,,\quad
  b^-_i=-N_i-\sum_{j\in I} a_j\cdot \#\left\{e=\vec{ji}\in E\right\}+a_i \,.
\end{equation}
For any $i,j\in I$, we also define constants $\gamma^+_{ij},\gamma^-_{ij}, \gamma^0_{ij}$ via:
\begin{equation}\label{eq:gammas}
  \gamma^+_{ij} \, = \sum_{e=\vec{ij}\in E} \log_q(t_e) \,,\quad
  \gamma^-_{ij} \, = -\sum_{e=\vec{ji}\in E} \log_q(t_e) \,,\quad
  \gamma^0_{ij} = \gamma^+_{ij}+\gamma^-_{ij} \,.
\end{equation}

Let $\hat{\CA}^q$ be the associative $\BC$-algebra generated by
$\{\sw_{i,r}^{\pm 1},D_{i,r}^{\pm 1}\}_{i\in I}^{1\leq r\leq a_i}$ satisfying the relations
$[\sw_{i,r},\sw_{j,s}]=0=[D_{i,r},D_{j,s}]$ and $D_{i,r}\sw_{i,r}=q^{-\delta_{ij}\delta_{rs}}\sw_{i,r}D_{i,r}$.
Let $\CA^q$ be obtained from $\hat{\CA}^q$ by formally adjoining
$\{\left(\prod_{r=1}^{a_i} \sw_{i,r}\right)^{\gamma^{\pm}_{ji}}\}_{i,j\in I}$ satisfying the relations
  $\sw_{\iota,s}\left(\prod_{r=1}^{a_i} \sw_{i,r}\right)^{\gamma^{\pm}_{ji}} =
   \left(\prod_{r=1}^{a_i} \sw_{i,r}\right)^{\gamma^{\pm}_{ji}} \sw_{\iota,s}$
and
  $D_{\iota,s} \left(\prod_{r=1}^{a_i} \sw_{i,r}\right)^{\gamma^{\pm}_{ji}} =
   q^{-\delta_{\iota i}\gamma^\pm_{ji}}\left(\prod_{r=1}^{a_i} \sw_{i,r}\right)^{\gamma^{\pm}_{ji}}D_{\iota,s}$,
for all $i,j,\iota,s$. We define $\wt{\CA}^{q}$ as the localization of $\CA^{q}$ by the
multiplicative set generated by $\{\sw_{i,r}-q^m \sw_{i,s}\}_{i\in I, r\ne s}^{m\in \BZ}$.

Then, we have the following analogue of Proposition~\ref{prop:homomorphism}:

\begin{Prop}\label{prop:homomorphism quiver}
There exists a unique $\BC$-algebra homomorphism
\begin{equation}\label{eq:GKLO-homom quiver}
  \wt{\Phi}^{\unl{\sz}}_{\unl{a}}\colon U^{(\unl{b}^+,\unl{b}^-)}_{Q} \longrightarrow \wt{\CA}^q
\end{equation}
for any $\unl{a}$ and $\unl{\sz}$ as above, with $\unl{b}^\pm\in \BZ^I$ defined via~\eqref{eq:b via aN}, such that
\begin{equation*}
\begin{split}
  & e_i(z)\mapsto
    \prod_{j\ne i} \left(\prod_{s=1}^{a_j} \sw_{j,s}\right)^{\gamma^+_{ij}} \cdot\
    \sum_{r=1}^{a_i} \delta\left(\frac{\sw_{i,r}}{z}\right)
    \frac{\sZ_i(\sw_{i,r}) \prod_{j\ne i}^{s\leq a_j} \prod_{e=\vec{ij}}(\frac{1}{t_e}-\frac{z}{\sw_{j,s}})
          \prod_{e=\vec{ii}}^{s\ne r}(\frac{1}{t_e}-\frac{z}{\sw_{i,s}})}
         {\prod_{s\ne r} (1-\frac{z}{\sw_{i,s}})} D_{i,r}^{-1} \,, \\
  & f_i(z)\mapsto
    \prod_{j\ne i} \left(\prod_{s=1}^{a_j} \sw_{j,s}\right)^{\gamma^-_{ij}} \cdot\
    \sum_{r=1}^{a_i} \delta\left(\frac{\sw_{i,r}}{qz}\right)
    \frac{\prod_{j\ne i}^{s\leq a_j} \prod_{e=\vec{ji}}(1-\frac{\sw_{j,s}t_e}{zq})
          \prod_{e=\vec{ii}}^{s\ne r}(1-\frac{\sw_{i,s}t_e}{zq})}
         {\prod_{s\ne r} (1-\frac{\sw_{i,s}}{zq})} D_{i,r} \,, \\
  & \psi^\pm_i(z)\mapsto
    \frac{q^{-1}-1}{\prod_{e=\vec{ii}} \left\{(\frac{1}{t_e}-1)(1-\frac{t_e}{q})\right\}} \cdot \prod_{j\ne i}\left(\prod_{s=1}^{a_j} \sw_{j,s}\right)^{\gamma^0_{ij}} \, \times \\
  & \qquad \qquad \qquad
    \left(\sZ_i(z) \cdot
      \frac{\prod_{j\in I}\prod_{s=1}^{a_j}\left\{\prod_{e=\vec{ij}}(\frac{1}{t_e}-\frac{z}{\sw_{j,s}})\cdot \prod_{e=\vec{ji}}(1-\frac{\sw_{j,s}t_e}{zq})\right\}}
           {\prod_{r=1}^{a_i}\left\{(1-\frac{z}{\sw_{i,r}})(1-\frac{\sw_{i,r}}{zq})\right\}}\right)^\pm \,.
\end{split}
\end{equation*}
Here, $e\in E$ and $\gamma(z)^\pm$ denotes the expansion of a rational function $\gamma(z)$ in $z^{\mp 1}$, respectively.
\end{Prop}


\subsection{Shuffle algebra realization of the positive and negative subalgebras}\label{ssec shuffle algebra quiver}
\

Similar to~(\ref{eq:halves isomorphism},~\ref{eq:positive-vs-negative},~\ref{eq:isomorphisms toroidal-1},~\ref{eq:isomorphisms toroidal-n}),
we have the following algebra isomorphisms:
\begin{equation}\label{eq:isomorphisms quiver}
  U^{(\unl{b}^+,\unl{b}^-),>}_{Q} \,\iso\, U^>_{Q} \,,\quad
  U^{(\unl{b}^+,\unl{b}^-),<}_{Q} \,\iso\, U^<_{Q} \,,\quad
  U^{<}_{Q} \,\iso\, U^{>,\op}_{Q} \,,
\end{equation}
with the subalgebras $U^{(\unl{b}^+,\unl{b}^-),>}_{Q}, U^{>}_{Q}, U^{(\unl{b}^+,\unl{b}^-),<}_{Q}, U^{<}_{Q}$
defined in a self-explaining way.

\medskip
\noindent
Consider an $\BN^{I}$-graded $\BC$-vector space
  $\BS^{Q}\ =\underset{\underline{k}=(k_i)_{i\in I}\in \BN^{I}} \bigoplus \BS^{Q}_{\underline{k}}$,
with the graded components
\begin{equation}\label{no-pole conditions quiver}
  \BS^{Q}_{\unl{k}} =
  \left\{F\in \BC\Big[\{x_{i,r}^{\pm 1}\}_{i\in I}^{1\leq r\leq k_i}\Big]^{S_{\unl{k}}}\right\} \,.
\end{equation}
Evoking the rational functions of~\eqref{eq:quiver-zeta}, we equip $\BS^{Q}$ with the bilinear \emph{shuffle product} $\star$
completely analogously to~\eqref{shuffle product}, thus making $\BS^{Q}$ into an associative unital $\BC$-algebra. As before,
we are interested in an $\BN^{I}$-graded subspace of $\BS^{Q}$ defined by the following \emph{wheel conditions}:
\begin{equation}\label{eq:wheel conditions quiver}
  F|_{x_{i,2}=qx_{i,1}} \quad \mathrm{is\ divisible\ by} \quad (x_{j,1}-\gamma x_{i,1})^{\flat_{ij}(\gamma)} \\
\end{equation}
for any $\gamma\in \BC^\times$ and $j\in I$, where
\begin{equation}\label{eq:multiplicity}
  \flat_{ij}(\gamma)=\#\Big\{e=\vec{ij}\in \overline{E} \,\Big|\, t_e=\gamma\Big\} \,.
\end{equation}
In particular, as pointed out in~\cite{n3,nss}, if for any $i,j\in I$ all the weights $\{t_e|e=\vec{ij}\in \overline{E}\}$
are pairwise distinct, then~\eqref{eq:wheel conditions quiver} may be written in a more familiar form,
cf.~(\ref{eq:wheel conditions},~\ref{eq:wheel conditions toroidal-1},~\ref{eq:wheel conditions toroidal-n}), as:
\begin{multline}\label{eq:wheel conditions quiver general}
  F\left(\{x_{i,r}\}\right)=0 \quad \mathrm{once} \quad x_{i,a}=qt_e^{-1}x_{j,b}=qx_{i,c} \\
  \mathrm{for\ any\ edge} \ \ \overline{E}\ni e=\vec{ij} \ \ \mathrm{and} \ \ a\ne c \,,
  \ \ \mathrm{where} \ \ a\ne b\ne c \ \ \mathrm{if} \ \ i=j \,.
\end{multline}
Let $S^{Q}\subset \BS^{Q}$ denote the subspace of all such elements $F$, which is easily seen to be $\star$-closed.
The resulting \emph{shuffle algebra} $\left(S^{Q},\star\right)$ is related to $U_Q$ via~\cite[Theorem 5.8]{nss}:

\begin{Prop}\cite{nss}\label{thm:shuffle iso quiver}
The assignments $e_{i,r}\mapsto x_{i,1}^r$ and $f_{i,r}\mapsto x_{i,1}^r$ for $i\in I,r\in \BZ$ give rise to $\BC$-algebra isomorphisms
\begin{equation}\label{eq:Upsilon quiver}
  \Upsilon\colon U^>_{Q} \,\iso\, S^{Q}
    \qquad \mathrm{and} \qquad
  \Upsilon\colon U^<_{Q} \,\iso\, S^{Q,\op} \,.
\end{equation}
\end{Prop}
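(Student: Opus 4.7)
The plan is to establish $\Upsilon$ as an isomorphism in two stages: first by showing it is a well-defined algebra homomorphism into $S^{Q}$, and then by verifying bijectivity. The negative-half statement reduces to the positive-half one once one notes the tautological anti-isomorphism $U^<_Q \cong (U^>_Q)^{\mathrm{op}}$ (in the spirit of~\eqref{eq:positive-vs-negative}), together with the identification of $S^{Q,\mathrm{op}}$ as the opposite algebra to $(S^Q,\star)$. Hence I focus throughout on the first map.

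First I would verify that the assignments $e_{i,r}\mapsto x_{i,1}^r$ are compatible with all defining relations of $U^>_Q$. The commutation relations~\eqref{Q2} among the $e_i(z)$'s are essentially tautological: the rational functions $\zeta_{ij}$ of~\eqref{eq:quiver-zeta} are defined precisely so that the generating-series identity
\begin{equation*}
  \zeta_{ji}(w/z)\, \Upsilon(e_i(z))\star \Upsilon(e_j(w)) = \zeta_{ij}(z/w)\, \Upsilon(e_j(w))\star \Upsilon(e_i(z))
\end{equation*}
holds inside the localized shuffle algebra. The cubic Serre relations of~\cite[\S5.4]{nss} must be checked separately; the image of each Serre combination, after full symmetrization, factors through a shuffle polynomial that vanishes once one imposes the wheel specialization~\eqref{eq:wheel conditions quiver general} on the appropriate variables, so the relation maps to zero. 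Simultaneously, one must check that $\Upsilon$ lands in $S^Q$, i.e.\ that iterated $\star$-products of degree-one monomials $x_{i,1}^r$ automatically satisfy the wheel conditions~(\ref{eq:wheel conditions quiver},~\ref{eq:wheel conditions quiver general}); this is a direct inspection once one observes that when two variables $x_{i,1},x_{i,2}$ are specialized to $(x,qx)$ the factor $\zeta_{ii}(x/qx)$ in~\eqref{shuffle product} supplies the requisite vanishing, and similarly the off-diagonal factors in~\eqref{eq:quiver-zeta} produce zeros at $x_{j,b}=t_e^{-1}\cdot$ (appropriate multiple).

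Bijectivity is then handled in two separate steps. For surjectivity I would argue by induction on the total grading $|\unl{k}|=\sum_i k_i$: given $F\in S^Q_{\unl{k}}$, use the wheel conditions to peel off a suitable $\star$-product of lower-degree shuffle elements lying in $\mathrm{Im}(\Upsilon)$ so that the difference is controlled, and iterate until the degree drops to one, where $S^Q_{(\ldots,1,\ldots)}=\BC[x_{i,1}^{\pm 1}]$ is tautologically in the image. For injectivity one invokes the non-degenerate Drinfeld/Hopf pairing between $U^>_Q$ and $U^<_Q$ built from the Cartan part of $U^{(\unl{b}^+,\unl{b}^-)}_Q$, and compares it via a Feigin--Odesskii type integral pairing on the shuffle side (the convergence of which is exactly what the assumption~\eqref{eq:dagger} is designed to guarantee); once the pairing is matched, non-degeneracy on one side forces $\ker\Upsilon=0$.

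The main obstacle is the surjectivity step. Unlike the type $A$ settings of Sections~\ref{sec affine}--\ref{sec cyclic}, where a clean specialization $x_{i,r}\mapsto q_i^{-2(r-1)}x_{i,1}$ isolates one simple root at a time, the general quiver setting requires a finer combinatorial analysis tracking the multiplicities $\flat_{ij}(\gamma)$ from~\eqref{eq:multiplicity}: one must show that \emph{every} monomial vanishing pattern compatible with~\eqref{eq:wheel conditions quiver} arises from an actual shuffle product, rather than from some additional hidden relations. Following~\cite{nss}, the key idea is to use a sequence of specializations adapted to the edges of $\overline{E}$, exploiting~\eqref{eq:dagger} to keep the relevant generating series well-defined and to decouple contributions from distinct edges. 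Granted this, the rest of the proof assembles into a standard induction; without it, one would be unable to certify that the candidate preimage actually lies in $U^>_Q$.
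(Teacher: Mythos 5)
The paper does not prove this statement: it is cited verbatim as \cite[Theorem 5.7]{nss} and used as a black box, so there is no internal proof to compare your proposal against. What you have written is an outline of the standard strategy for shuffle-algebra isomorphisms (well-definedness, injectivity via a pairing, surjectivity via specializations), which does broadly match the architecture of the arguments in the references \cite{n3,nss}. But as a proof the proposal has a genuine gap: the surjectivity step is left entirely as a gesture. You acknowledge that ``the main obstacle is the surjectivity step'' and that ``the key idea is to use a sequence of specializations adapted to the edges of $\overline{E}$,'' but no actual mechanism is given for why the wheel conditions~(\ref{eq:wheel conditions quiver},~\ref{eq:wheel conditions quiver general}) characterize precisely the image of $\Upsilon$ and nothing more. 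This is the whole content of the theorem in \cite{nss}; without it, the ``proof'' is only a plan.

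Two secondary points. First, your treatment of the Serre relations is imprecise: to show the cubic Serre combination is killed by $\Upsilon$, one must show that its image vanishes identically as an element of $\BS^Q$, not merely that it ``vanishes once one imposes the wheel specialization.'' Vanishing at the wheel locus is what it means to belong to $S^Q$; it is not a criterion for being zero, and conflating the two would let nonzero elements of $S^Q$ slip through. The correct check is an explicit computation in $\BS^Q$ (a rational-function identity), which in the quiver setting is where the particular form of the Serre relations in~\cite[\S 5.4]{nss} is forced. Second, the assertion that the hypothesis~\eqref{eq:dagger} is there ``exactly'' to guarantee convergence of a Feigin--Odesskii pairing should be checked against \cite{n3,nss}; in those references $|q|<|t_e|<1$ is used more broadly to control the pole/zero structure and the slope decomposition of the shuffle algebra, not solely to make a pairing converge. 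These are fixable, but the surjectivity gap is the one that would have to be filled before this could count as a proof.
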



\subsection{Shuffle algebra realization of the GKLO-type homomorphisms}\label{ssec shuffle GKLO quiver}
\

For any $i\in I$ an $1\leq r\leq a_i$, we define:
\begin{equation}\label{eq:Y-factors quiver}
\begin{split}
  & Y_{i,r}(z):=
    \prod_{j\ne i} \left(\prod_{j=1}^{a_j} \sw_{j,s}\right)^{\gamma^+_{ij}} \cdot \
    \frac{\sZ_i(z)\prod_{j\ne i}^{s\leq a_j} \prod_{e=\vec{ij}\in E}(\frac{1}{t_e}-\frac{z}{\sw_{j,s}})
          \prod_{e=\vec{ii}\in E}^{s\ne r}(\frac{1}{t_e}-\frac{z}{\sw_{i,s}})}
         {\prod_{s\ne r} (1-\frac{z}{\sw_{i,s}})} \,, \\
  & Y'_{i,r}(z):=
    \prod_{j\ne i} \left(\prod_{j=1}^{a_j} \sw_{j,s}\right)^{\gamma^-_{ij}} \cdot \
    \frac{\prod_{j\ne i}^{s\leq a_j} \prod_{e=\vec{ji}\in E}(1-\frac{\sw_{j,s}t_e}{zq})
          \prod_{e=\vec{ii}\in E}^{s\ne r}(1-\frac{\sw_{i,s}t_e}{zq})}
         {\prod_{s\ne r} (1-\frac{\sw_{i,s}}{zq})} \,.
\end{split}
\end{equation}
We also define
\begin{equation}\label{eq:phi-factors quiver}
  \varphi_{ij}\left(\frac{z}{w}\right) = \left(\frac{z-w}{zq^{-1}-w}\right)^{\delta_{ij}}
  \prod_{e=\vec{ij}\in E} \left(\frac{1}{t_e}-\frac{z}{w}\right)^{-1} \,.
\end{equation}
Define the $\BC$-algebra $\wt{\CA}^{q,'}$ as the further localization of $\wt{\CA}^{q}$ by the multiplicative set
generated by $\{\sw_{i,r}-t_e^{-1}q^{m}\sw_{j,s}\}_{e=\vec{ij}\in E, m\in \BZ}^{r\leq a_i, s\leq a_j}$.
We note that $\wt{\CA}^{q}$ is naturally embedded into $\wt{\CA}^{q,'}$.

The following result is proved completely analogously to Theorem~\ref{thm:shuffle homomorphism}:

\begin{Thm}\label{thm:shuffle homomorphism quiver}
(a) The assignment
\begin{equation}\label{eq:explicit shuffle homom 1 quiver}
\begin{split}
  & \BS^{Q}_{\unl{k}} \ni E\mapsto
    \prod_{i\in I} \prod_{e=\vec{ii}\in E} t_e ^{\frac{k_i-k_i^2}{2}}  \,\times \\
  & \sum_{\substack{m^{(i)}_1+\ldots+m^{(i)}_{a_i}=k_i\\ m^{(i)}_r\in \BN \ \forall\, i\in I}}
    \left\{\prod_{i\in I} \prod_{r=1}^{a_i} \prod_{p=1}^{m^{(i)}_r} Y_{i,r}\Big(\sw_{i,r} q^{p-1}\Big)\cdot
    E\left(\Big\{\sw_{i,r} q^{p-1}\Big\}_{i\in I, 1\leq r\leq a_i}^{1\leq p\leq m^{(i)}_r}\right)\times\right.\\
  & \left.
    \prod_{i\in I} \prod_{1\leq r\leq a_i} \prod_{1\leq p_1<p_2\leq m^{(i)}_r}
    \, \left(\zeta^{-1}_{ii}\Big(\sw_{i,r} q^{p_1-1} \Big/ \sw_{i,r} q^{p_2-1}\Big) \ \cdot \prod_{e=\vec{ii}\in E} t_e \right)\, \times\right.\\
  & \left.
    \prod_{i,j\in I}\prod_{\substack{1\leq r_1\leq a_{i}\\1\leq r_2\leq a_j}}^{(i,r_1)\ne (j,r_2)}
    \prod_{1\leq p_1\leq m^{(i)}_{r_1}}^{1\leq p_2\leq m^{(j)}_{r_2}}
    \varphi_{ij}\Big( \sw_{i,r_1} q^{p_1-1} \Big/ \sw_{j,r_2} q^{p_2-1}\Big) \cdot\,
    \prod_{i\in I} \prod_{r=1}^{a_i} D_{i,r}^{-m^{(i)}_r}\right\}
\end{split}
\end{equation}
gives rise to the algebra homomorphism
\begin{equation}\label{eq:Psi-tilde + quiver}
  \widehat{\Phi}^{\unl{\sz}}_{\unl{a}}\colon \BS^{Q}\longrightarrow \wt{\CA}^{q,'} \,.
\end{equation}
Moreover, for $\unl{b}^\pm\in \BZ^I$ defined via~\eqref{eq:b via aN}, the composition
\begin{equation}\label{eq:homom extension 1 quiver}
  U^{(\unl{b}^+,\unl{b}^-),>}_{Q} \, \overset{\eqref{eq:isomorphisms quiver}}{\iso}\, U^>_{Q}\, \overset{\Upsilon}{\iso}\, S^{Q}
  \overset{\widehat{\Phi}^{\unl{\sz}}_{\unl{a}}}{\longrightarrow} \wt{\CA}^{q,'}
\end{equation}
coincides with the restriction of the homomorphism $\wt{\Phi}^{\unl{\sz}}_{\unl{a}}$ of~\eqref{eq:GKLO-homom quiver}
to the subalgebra $U^{(\unl{b}^+,\unl{b}^-),>}_{Q}$.
In~particular, the image of $U^{(\unl{b}^+,\unl{b}^-),>}_{Q}$ under~\eqref{eq:homom extension 1 quiver}
is in the subalgebra $\wt{\CA}^{q}$ of $\wt{\CA}^{q,'}$.

\medskip
\noindent
(b)  The assignment
\begin{equation}\label{eq:explicit shuffle homom 2 quiver}
\begin{split}
  & \BS^{Q,\op}_{\unl{k}} \ni F\mapsto
    \prod_{i\in I} \prod_{e=\vec{ii}\in E} t_e^{\frac{k_i-k_i^2}{2}} \,\times \\
  & \sum_{\substack{m^{(i)}_1+\ldots+m^{(i)}_{a_i}=k_i\\ m^{(i)}_r\in \BN \ \forall\, i\in I}}
    \left\{\prod_{i\in I} \prod_{r=1}^{a_i} \prod_{p=1}^{m^{(i)}_r} Y'_{i,r}\Big(\sw_{i,r} q^{-p}\Big)\cdot
    F\left(\Big\{\sw_{i,r} q^{-p}\Big\}_{i\in I,1\leq r\leq a_i}^{1\leq p\leq m^{(i)}_r}\right)\times\right.\\
  & \left.
    \prod_{i\in I} \prod_{1\leq r\leq a_i} \prod_{1\leq p_1<p_2\leq m^{(i)}_r}
    \left( \zeta^{-1}_{ii}\Big(\sw_{i,r} q^{-p_2} \Big/ \sw_{i,r} q^{-p_1}\Big) \ \cdot \prod_{e=\vec{ii}\in E} t_e \right) \,\times\right.\\
  & \left.
    \prod_{i,j\in I}\prod_{\substack{1\leq r_1\leq a_{i}\\1\leq r_2\leq a_j}}^{(i,r_1)\ne (j,r_2)}
    \prod_{1\leq p_1\leq m^{(i)}_{r_1}}^{1\leq p_2\leq m^{(j)}_{r_2}}
    \varphi_{ji}\Big( \sw_{j,r_2} q^{-p_2} \Big/ \sw_{i,r_1} q^{-p_1} \Big) \cdot\,
    \prod_{i\in I} \prod_{r=1}^{a_i} D_{i,r}^{m^{(i)}_r}\right\}
\end{split}
\end{equation}
gives rise to the algebra homomorphism
\begin{equation}\label{eq:Psi-tilde - quiver}
  \widehat{\Phi}^{\unl{\sz}}_{\unl{a}}\colon \BS^{Q,\op}\longrightarrow \wt{\CA}^{q,'} \,.
\end{equation}
Moreover, for $\unl{b}^\pm\in \BZ^I$ defined via~\eqref{eq:b via aN}, the composition
\begin{equation}\label{eq:homom extension 2 quiver}
  U^{(\unl{b}^+,\unl{b}^-),<}_{Q} \, \overset{\eqref{eq:isomorphisms quiver}}{\iso}\, U^<_{Q}\, \overset{\Upsilon}{\iso}\, S^{Q,\op}
  \overset{\widehat{\Phi}^{\unl{\sz}}_{\unl{a}}}{\longrightarrow} \wt{\CA}^{q,'}
\end{equation}
coincides with the restriction of the homomorphism $\wt{\Phi}^{\unl{\sz}}_{\unl{a}}$ of~\eqref{eq:GKLO-homom quiver}
to the subalgebra $U^{(\unl{b}^+,\unl{b}^-),<}_{Q}$.
In~particular, the image of $U^{(\unl{b}^+,\unl{b}^-),<}_{Q}$ under~\eqref{eq:homom extension 2 quiver}
is in the subalgebra $\wt{\CA}^{q}$ of $\wt{\CA}^{q,'}$.
\end{Thm}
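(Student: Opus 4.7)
The plan is to mimic the proof of Theorem~\ref{thm:shuffle homomorphism}, using the shuffle isomorphism $\Upsilon\colon U^>_Q \iso S^Q$ of Proposition~\ref{thm:shuffle iso quiver}. Denoting the right-hand side of \eqref{eq:explicit shuffle homom 1 quiver} by $\widehat{\Phi}^{\unl{\sz}}_{\unl{a}}(E)$, my first task is to verify the multiplicativity
\begin{equation*}
  \widehat{\Phi}^{\unl{\sz}}_{\unl{a}}(E\star F)=\widehat{\Phi}^{\unl{\sz}}_{\unl{a}}(E)\cdot \widehat{\Phi}^{\unl{\sz}}_{\unl{a}}(F)
  \qquad \text{for all}\ E\in \BS^Q_{\unl{k}},\, F\in \BS^Q_{\unl{\ell}}\,,
\end{equation*}
which promotes $\widehat{\Phi}^{\unl{\sz}}_{\unl{a}}$ to a $\BC$-algebra homomorphism. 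Once this is in hand, comparing the image of $E=x_{i,1}^r$ under \eqref{eq:explicit shuffle homom 1 quiver} with the assignment for $e_i(z)$ in \eqref{eq:GKLO-assignment quiver} identifies the composition \eqref{eq:homom extension 1 quiver} with the restriction $\wt{\Phi}^{\unl{\sz}}_{\unl{a}}|_{U^{(\unl{b}^+,\unl{b}^-),>}_Q}$: indeed, in degree $\unl{k}$ with a single nonzero entry $k_i=1$, the sum over partitions $m^{(i)}_1+\cdots+m^{(i)}_{a_i}=1$ collapses to $\sum_{r=1}^{a_i} Y_{i,r}(\sw_{i,r})\sw_{i,r}^r D_{i,r}^{-1}$, and $Y_{i,r}$ in \eqref{eq:Y-factors quiver} is by construction the coefficient of $\delta(\sw_{i,r}/z) D_{i,r}^{-1}$ in \eqref{eq:GKLO-assignment quiver}.

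For the multiplicativity, I would expand $\widehat{\Phi}^{\unl{\sz}}_{\unl{a}}(E)\cdot \widehat{\Phi}^{\unl{\sz}}_{\unl{a}}(F)$ as a double sum over $(m^{(i)}_r)$ and $(n^{(i)}_r)$, commute each $D^{-m^{(i)}_r}_{i,r}$ past the specialized $\sw$-arguments inside $\widehat{\Phi}^{\unl{\sz}}_{\unl{a}}(F)$ (producing shifts $\sw_{i,r}\mapsto q^{-m^{(i)}_r}\sw_{i,r}$ in the $Y_{i,r}$-factors and in $F$ itself), and reindex by the total occupation $M^{(i)}_r=m^{(i)}_r+n^{(i)}_r$. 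On the shuffle side, the symmetrization defining $E\star F$ in \eqref{shuffle product} distributes the $M^{(i)}_r$ specialized points between $E$- and $F$-slots, producing cross-factors $\zeta_{ij}$ from \eqref{eq:quiver-zeta}. The matching between the two expansions hinges on the elementary identity
\begin{equation*}
  \zeta_{ij}(z/w)\cdot \varphi_{ij}(z/w)=\prod_{e=\vec{ji}\in E}\left(1-\frac{wt_e}{zq}\right)\,,
\end{equation*}
which is immediate from \eqref{eq:quiver-zeta} and \eqref{eq:phi-factors quiver}: the factor $\varphi_{ij}$ in \eqref{eq:explicit shuffle homom 1 quiver} absorbs the ``normal-ordering'' $D$-commutations, leaving behind exactly the $\zeta_{ij}$ demanded by the shuffle product together with the edge factors already fed into the $Y_{i,r}$'s via \eqref{eq:Y-factors quiver}.

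The main obstacle will be the careful tracking of three independent sources of prefactors: (i) the loop-edge weights $\prod_{e=\vec{ii}\in E} t_e^{(k_i-k_i^2)/2}$ in front of \eqref{eq:explicit shuffle homom 1 quiver}, whose recombination under $\star$-multiplicativity must absorb a missing exponent $-k_i\ell_i$ per loop edge; (ii) the formal powers $\prod_{j\ne i}\bigl(\prod_s \sw_{j,s}\bigr)^{\gamma^\pm_{ij}}$ hidden in $Y_{i,r}$ and $Y'_{i,r}$, whose commutation with $D$-operators generates compensating $q^{\gamma^\pm_{ij}}$-phases that must cancel globally; and (iii) the orientation-dependent asymmetry between $\zeta_{ij}$ and $\zeta_{ji}$ in \eqref{eq:quiver-zeta}, which is what governs the different shapes of the $e_i$- and $f_i$-images in \eqref{eq:GKLO-assignment quiver}. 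A clean way to organize (i)--(iii) is to work in the ``generic'' case where all edge weights $\{t_e\}_{e\in E}$ are pairwise distinct so that the wheel conditions take the familiar form \eqref{eq:wheel conditions quiver general}, and then extend by a continuity argument in the parameters $t_e$. Part~(b) is entirely symmetric: one replaces $Y_{i,r}$ by $Y'_{i,r}$, inverts the shifts $q^{p-1}\rightsquigarrow q^{-p}$, and swaps $\vec{ij}\leftrightarrow \vec{ji}$ throughout; the compatibility of the resulting assignment with $\wt{\Phi}^{\unl{\sz}}_{\unl{a}}$ on the generators $f_{i,r}$ is again a direct substitution against \eqref{eq:GKLO-assignment quiver}.
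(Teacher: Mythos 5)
Your proposal is correct and follows essentially the same route as the paper: the paper simply remarks that Theorem~\ref{thm:shuffle homomorphism quiver} is ``proved completely analogously to Theorem~\ref{thm:shuffle homomorphism}'', whose own proof consists of (i) a direct, tedious check of the multiplicativity $\widehat{\Phi}^{\unl{\sz}}_{\unl{a}}(E\star F)=\widehat{\Phi}^{\unl{\sz}}_{\unl{a}}(E)\widehat{\Phi}^{\unl{\sz}}_{\unl{a}}(F)$ on all of $\BS^Q$, and (ii) a one-line comparison of the images of the generators $e_{i,r}$. You have usefully supplied the identity
\[
  \zeta_{ij}\!\left(\tfrac{z}{w}\right)\,\varphi_{ij}\!\left(\tfrac{z}{w}\right)
  =\prod_{e=\vec{ji}\in E}\Bigl(1-\tfrac{wt_e}{zq}\Bigr)\,,
\]
which indeed follows at once from~\eqref{eq:quiver-zeta} and~\eqref{eq:phi-factors quiver}, and which is the algebraic heart of the ``tedious'' verification: it explains why $\varphi_{ij}$ replaces $\zeta_{ij}^{-1}$ in the quiver formula, the complementary edge factors being already encoded in $Y_{i,r}$.

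Two small remarks. The ``generic $t_e$ then continuity'' reduction is superfluous: the multiplicativity is an identity on the ambient space $\BS^Q$ and the formulas are rational in $t_e$, so the shape of the wheel conditions never enters at that stage (they only matter for identifying $S^Q$ with $U^>_Q$ via Proposition~\ref{thm:shuffle iso quiver}). Also, in the degree-$\unl{k}$ check with $k_i=1$ and $E=x_{i,1}^n$ you should get $\sum_{r=1}^{a_i} Y_{i,r}(\sw_{i,r})\,\sw_{i,r}^{\,n}\,D_{i,r}^{-1}$, not $\sw_{i,r}^{\,r}$; the index $r$ in your display collides with the exponent.
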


\begin{Rem}
This theorem immediately implies that the assignment of Proposition~\ref{prop:homomorphism quiver} is indeed compatible with
the cubic Serre relations of~\cite[\S5.4]{nss} which we omitted, cf.~Remark~\ref{rem:shuffle simplifies gklo}.
\end{Rem}


\section{Relation to quantum $Q$-systems of type $A$}\label{sec q-system}

In this section, we explain how the shuffle approach from Section~\ref{sec affine} in the simplest case of
$\fg=\ssl_2$ simplifies some of the tedious arguments of~\cite{dfk1} in their study of $A$-type $Q$-systems.
We also match their difference operators representing the $M$-system with those of~Section~\ref{sec affine}.


\subsection{Elements $E_{k,n}$ and $M_{k,n}$ for $\fg=\ssl_2$}\label{ssec sl2 elements}
\

For any $k\geq 1$ and $n\in \BZ$, consider the elements $E_{k,n}\in S_k=S^{(\ssl_2)}_k$ defined via:
\begin{equation}\label{eq:special-shuffle-elts}
  E_{k,n}(x_1,\ldots,x_k):=\prod_{1\leq r\leq k} x_r^n \prod_{1\leq r\ne s\leq k} (x_r-q^{-2}x_s) \,.
\end{equation}
The following result identifies these elements with those featuring in~\cite[(9.2)]{ft1}:

\begin{Lem}\label{lem:E-commutator}
The elements $E_{k,n}$ correspond to explicit $q$-commutators in $U^>_q(L\ssl_2)$:
\begin{equation}\label{eq:q-commutators}
  E_{k,n}=\frac{(-1)^{\frac{k(k-1)}{2}}}{(1-q^{-2})^{k-1}}\cdot
  \Upsilon\left([e_{n},[e_{n+2},\cdots,[e_{n+2(k-2)},e_{n+2(k-1)}]_{q^{-4}}\cdots]_{q^{-2(k-1)}}]_{q^{-2k}}\right) \,,
\end{equation}
where $[x,y]_{q^r}=xy-q^r\cdot yx$ as before.
\end{Lem}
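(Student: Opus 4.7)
My plan is to induct on $k$, with the base case $k=1$ being immediate: $\Upsilon(e_n)=x_1^n=E_{1,n}$ and the prefactor is $1$. For the inductive step, observe that by the nested structure, the iterated $q$-commutator satisfies the recursion
\[
B_{k,n}\ :=\ [e_n,[e_{n+2},\cdots,[e_{n+2(k-2)},e_{n+2(k-1)}]_{q^{-4}}\cdots]_{q^{-2(k-1)}}]_{q^{-2k}}\ =\ [e_n,B_{k-1,n+2}]_{q^{-2k}}\,,
\]
since the inner nested commutator of length $k-1$ starting at $e_{n+2}$ has exactly the shape of $B_{k-1,n+2}$. Applying the algebra isomorphism $\Upsilon$ of Proposition~\ref{thm:shuffle iso} and the inductive hypothesis $\Upsilon(B_{k-1,n+2})=c_{k-1}E_{k-1,n+2}$ (with $c_{k-1}=(-1)^{(k-1)(k-2)/2}(1-q^{-2})^{k-2}$), the claim reduces to the shuffle identity
\[
x_1^n\star E_{k-1,n+2}\,-\,q^{-2k}\,E_{k-1,n+2}\star x_1^n\;=\;(-1)^{k-1}(1-q^{-2})\,E_{k,n}
\]
in $S^{(\ssl_2)}_k$, with $\zeta_{11}(z/w)=(z-q^{-2}w)/(z-w)$.

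Next I would expand each shuffle product explicitly. Using the $S(k-1)$-symmetry of $E_{k-1,n+2}$, the symmetrization collapses to a sum over the position $j\in\{1,\dots,k\}$ of the singleton variable from $x_1^n$:
\[
x_1^n\star E_{k-1,n+2}\;=\;\sum_{j=1}^k x_j^n\,E_{k-1,n+2}(x_{\hat j})\prod_{r\neq j}\frac{x_j-q^{-2}x_r}{x_j-x_r}\,,
\]
and similarly for $E_{k-1,n+2}\star x_1^n$ with the swapped factor $(x_r-q^{-2}x_j)/(x_r-x_j)$. The key factorization
\[
x_j^n\,E_{k-1,n+2}(x_{\hat j})\;=\;E_{k,n}\cdot\frac{\prod_{r\neq j}x_r^2}{\prod_{r\neq j}(x_j-q^{-2}x_r)(x_r-q^{-2}x_j)}
\]
allows one to factor $E_{k,n}$ out of the sum. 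After this cancellation, the target identity reduces to the scalar identity
\[
\sum_{j=1}^k\frac{\prod_{r\neq j}x_r^2}{\prod_{r\neq j}(x_j-x_r)}\Big[\frac{1}{\prod_{r\neq j}(x_r-q^{-2}x_j)}+\frac{(-1)^k q^{-2k}}{\prod_{r\neq j}(x_j-q^{-2}x_r)}\Big]\;=\;(-1)^{k-1}(1-q^{-2})\,.
\]

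This last identity is the heart of the argument and the main obstacle: remarkably, the $j$-dependent rational functions on the left sum to an $x$-independent constant (I have verified this directly for $k=2$, where the collapse uses $x_r^2-q^{-4}x_s^2=(x_r-q^{-2}x_s)(x_r+q^{-2}x_s)$). In general I would prove it via the residue theorem applied to the auxiliary function
\[
G(u)\;:=\;\frac{1}{u\,\prod_{r=1}^{k}(u-x_r)(x_r-q^{-2}u)}\,.
\]
The differential $G(u)\,du$ on $\mathbb{P}^1$ has simple poles at $u=0$, $u=x_j$ and $u=q^2x_j$, and is holomorphic at $\infty$ for $k\geq 1$. A direct computation shows that the residues at $u=x_j$ reproduce (up to a common scaling by $(1-q^{-2})\prod_r x_r^2$) the first group of summands above, while the residues at $u=q^2 x_j$, rewritten using $\prod_{r\neq j}(q^2 x_j-x_r)=q^{2(k-1)}\prod_{r\neq j}(x_j-q^{-2}x_r)$, produce the second group; the residue at $u=0$ contributes precisely $(-1)^k/\prod_r x_r^2$. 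Setting the total residue sum to zero yields exactly the desired identity, and the induction closes via $c_k=c_{k-1}\cdot(-1)^{k-1}(1-q^{-2})=(-1)^{k(k-1)/2}(1-q^{-2})^{k-1}$, matching the claim.
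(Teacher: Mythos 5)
Your proposal is correct and complete, but it takes a genuinely different route from the paper after the common inductive setup. Both you and the author reduce the induction to analyzing the element $[x^n, E_{k-1,n+2}]_{q^{-2k}} \in S_k$, but the author then argues structurally: the $q$-commutator vanishes under the specialization $x_{k-1}=q^2 x_k$ (a wheel-type vanishing), hence is divisible by $\prod_{r\ne s}(x_r-q^{-2}x_s)$, and a total-degree count forces proportionality $[x^0,E_{k-1,2}]_{q^{-2k}}=c_k\,E_{k,0}$; the constant $c_k$ is then extracted by setting $x_k=t$, dividing by $t^{2(k-1)}$, and taking $t\to\infty$, which yields a two-step recursion $c_{k}=-c_{k-1}^2/c_{k-2}$. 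You, instead, expand both shuffle products explicitly using the symmetry of $E_{k-1,n+2}$, factor $E_{k,n}$ out of every summand, and reduce to a rational-function identity which you verify via the residue theorem for the auxiliary form $G(u)\,du$ on $\mathbb{P}^1$. I checked your residue computations: the residue at $u=0$ is $(-1)^k/\prod_r x_r^2$, the residues at $u=x_j$ and $u=q^2x_j$ (using $\prod_{r\ne j}(q^2x_j-x_r)=q^{2(k-1)}\prod_{r\ne j}(x_j-q^{-2}x_r)$ and $q^2-1=q^2(1-q^{-2})$) reproduce the two groups of summands up to the common factor $1/\big((1-q^{-2})\prod_r x_r^2\big)$, and holomorphy at $\infty$ holds since $G(u)\sim c\,u^{-2k-1}$; the vanishing residue sum thus gives exactly $(-1)^{k-1}(1-q^{-2})$ on the right, closing the one-step recursion $c_k=(-1)^{k-1}(1-q^{-2})\,c_{k-1}$. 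What your route buys is a direct closed-form evaluation of the constant without the limit argument (and a one-step rather than two-step recursion); what the paper's route buys is that it leans only on qualitative shuffle-algebra features (wheel-type vanishing plus a degree count), which is the standard idiom in this area and adapts more readily to higher rank and other types where explicit expansions become unwieldy.
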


\begin{proof}
It suffices to prove~\eqref{eq:q-commutators} for $n=0$. The proof is by induction on $k\geq 1$, the base case $k=1$ being obvious.
For a step of induction, deducing the $k=\ell+1$ case of~\eqref{eq:q-commutators} from its validity for $k\leq \ell$, we first
note (by direct computations) that $[x^0,E_{\ell,2}]_{q^{-2(\ell+1)}} = x^0 \star E_{\ell,2} - q^{-2(\ell+1)} E_{\ell,2}\star x^0\in S_{\ell+1}$ vanishes
under the specialization $x_{\ell+1}=q^2x_{\ell}$, hence, it is divisible by the product $\prod_{1\leq r\ne s\leq \ell+1}(x_r-q^{-2}x_s)$.
As $[x^0,E_{\ell,2}]_{q^{-2(\ell+1)}}$ is a polynomial in $x_1,\ldots,x_{\ell+1}$ of the total degree $\ell(\ell+1)$, we get:
\begin{equation}\label{eq:proportionality}
  \Upsilon\left([e_{0},[e_{2},\cdots,[e_{2(\ell-1)},e_{2\ell}]_{q^{-4}}\cdots]_{q^{-2\ell}}]_{q^{-2(\ell+1)}}\right)=
  c_{\ell+1}\cdot E_{\ell+1,0}
\end{equation}
for some constant $c_{\ell+1}$. To determine this constant, we plug $x_{\ell+1}=t$ into~\eqref{eq:proportionality},
divide both sides by $t^{2\ell}$, and consider the $t\to \infty$ limit to obtain:
\begin{multline*}
  (-q^{-2})^{\ell}c_{\ell+1}E_{\ell,0}=
  (-1)^{\ell-1}q^{-2\ell}c_{\ell}[x^0,E_{\ell-1,2}]_{q^{-2\ell}}= \\
  (-1)^{\ell-1}q^{-2\ell} \frac{c_{\ell}}{c_{\ell-1}}
    \Upsilon\left([e_{0},[e_{2},\cdots,[e_{2(\ell-2)},e_{2\ell-2}]_{q^{-4}}\cdots]_{q^{-2\ell+2}}]_{q^{-2\ell}}\right)=
  (-1)^{\ell-1}q^{-2\ell} \frac{c_{\ell}^2}{c_{\ell-1}} E_{\ell,0} \,,
\end{multline*}
where we used the induction assumption for $k=\ell-1$ and $k=\ell$.
Combining the resulting equality $c_{\ell+1}=-\frac{c_\ell^2}{c_{\ell-1}}$ with $c_1=1$ and $c_2=q^{-2}-1$,
we get $c_{\ell+1}=(-1)^{\frac{\ell(\ell+1)}{2}}(1-q^{-2})^\ell$.
\end{proof}

Let us now compare this with~\cite{dfk1}. To this end, we define $M_{k,n}$ via~\cite[(2.23)]{dfk1}:\footnote{There
seems to be a sign typo in~\cite[(2.23)]{dfk1}, making it actually incompatible with~\cite[(2.25)]{dfk1}.}
\begin{equation}\label{eq:M-terms}
  M_{k,n}:=\frac{(-1)^{\frac{k(k-1)}{2}}}{(1-\fq)^{k-1}}\cdot
  [\cdots[[M_{1,n-k+1},M_{1,n-k+3}]_{\fq^2},M_{1,n-k+5}]_{\fq^3}, \cdots, M_{1,n+k-1}]_{\fq^{k}} \,,
\end{equation}
where we identify $M_{1,n}$ with our $e_{-n}$ and their parameter $\fq$ with our $q^2$, in accordance with \cite[(2.20)]{dfk1}.
Due to~\eqref{eq:q-commutators}, we get:
\begin{multline}\label{eq:M-shuffle}
  \Upsilon(M_{k,n})=
  (-1)^{\frac{k(k-1)}{2}} (1-q^{-2})^{1-k} q^{k(k-1)}\cdot \Upsilon\left([e_{-n-k+1},\cdots,[e_{n+k-3},e_{n+k-1}]_{q^{-4}}\cdots]_{q^{-2k}}\right)= \\
  q^{k(k-1)} \cdot E_{k,1-k-n}(x_1,\ldots,x_k)=q^{k(k-1)}\ \cdot \prod_{1\leq r\leq k} x_r^{1-k-n} \prod_{1\leq r\ne s\leq k} (x_r-q^{-2}x_s) \,.
\end{multline}
Thus, the generating series $\fm_k(z):=\sum_{n\in \BZ} M_{k,n}z^n$ of~\cite[(2.13)]{dfk1} is identified with:
\begin{equation}\label{eq:m-series-shuffle}
  \Upsilon(\fm_k(z))=q^{k(k-1)}\ \cdot
  \prod_{1\leq r\leq k} x_r^{1-k} \prod_{1\leq r\ne s\leq k} (x_r-q^{-2}x_s) \cdot
  \delta\left( \frac{x_1\cdots x_k}{z}\right) \,,
\end{equation}
where $\delta(z)$ is the delta-function of~\eqref{eq:generating series}. This immediately implies~\cite[Theorem 2.10]{dfk1}
(expressing $M_{k,n}$ as a non-commutative polynomial in $M_{1,m}$'s with coefficients in $\BZ[\fq,\fq^{-1}]$):

\begin{Prop}\label{prop:m-via-CT}
Let $\Delta_{\fq}(u_1,\ldots,u_k)=\prod_{1\leq r<s\leq k} (1-\fq\frac{u_s}{u_r})$. Then, we have:
\begin{equation}\label{eq:m-integral}
  \fm_k(z)=\CT_{u_1,\ldots,u_k} \left(\Delta_{\fq}(u_1,\ldots,u_k)\fm_1(u_1)\cdots \fm_1(u_k)\delta\Big(\frac{u_1\cdots u_k}{z}\Big)\right) \,,
\end{equation}
where $\CT_{u_1,\ldots,u_k}$ denotes the ``constant term'' (i.e.\ $u_1^0\cdots u_k^0$-coefficient) of any series in $u_r$'s.
\end{Prop}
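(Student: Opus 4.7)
The plan is to apply the algebra isomorphism $\Upsilon\colon U^>_q(L\ssl_2)\iso S$ of Proposition~\ref{thm:shuffle iso} to both sides of~\eqref{eq:m-integral} and verify the resulting identity inside $S$, extended by formal Laurent series in $z,u_1,\ldots,u_k$. The image of the left-hand side $\Upsilon(\fm_k(z))$ is given explicitly by~\eqref{eq:m-series-shuffle}, reducing the task to computing the image of the right-hand side and matching.

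Since $\Upsilon(\fm_1(u))=\delta(x_1/u)$ is a degree-$1$ shuffle element, a straightforward induction on $k$ using the shuffle product formula~\eqref{shuffle product} shows that the iterated product of $k$ degree-$1$ elements is
$$
  \Upsilon\bigl(\fm_1(u_1)\cdots \fm_1(u_k)\bigr) \, = \,
  \sum_{\sigma\in S_k} \prod_{i=1}^k \delta\!\left(\frac{x_{\sigma(i)}}{u_i}\right) \,
  \prod_{1\le r<s\le k}\zeta_{11}\!\left(\frac{x_{\sigma(r)}}{x_{\sigma(s)}}\right) \,,
$$
with $\zeta_{11}(z/w)=(z-q^{-2}w)/(z-w)$. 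Multiplying by $\Delta_\fq(u_1,\ldots,u_k)\delta(u_1\cdots u_k/z)$ and taking $\CT_{u_1,\ldots,u_k}$, the delta-functions in each summand permit the substitution $u_i\mapsto x_{\sigma(i)}$, while the invariance $u_1\cdots u_k=x_1\cdots x_k$ pulls $\delta(x_1\cdots x_k/z)$ out of the sum. Comparison with~\eqref{eq:m-series-shuffle} thereby reduces the proposition to the combinatorial identity
$$
  \sum_{\sigma\in S_k} \Delta_\fq\bigl(x_{\sigma(1)},\ldots,x_{\sigma(k)}\bigr) \prod_{r<s}\zeta_{11}\!\left(\frac{x_{\sigma(r)}}{x_{\sigma(s)}}\right) \, = \,
  q^{k(k-1)}\prod_r x_r^{1-k}\prod_{r\ne s}(x_r-q^{-2}x_s) \,.
$$

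The key observation for this identity is that each summand on the left rearranges as $\prod_{r<s}(x_{\sigma(r)}-q^2 x_{\sigma(s)})(x_{\sigma(r)}-q^{-2}x_{\sigma(s)})/\bigl(x_{\sigma(r)}(x_{\sigma(r)}-x_{\sigma(s)})\bigr)$: the numerator is symmetric in each pair $(x_{\sigma(r)},x_{\sigma(s)})$ and therefore $\sigma$-invariant, equal to $\prod_{r<s}(x_r-q^2 x_s)(x_r-q^{-2}x_s)$; the residual $\sigma$-dependent factor $\prod_{r<s}\bigl(x_{\sigma(r)}(x_{\sigma(r)}-x_{\sigma(s)})\bigr)^{-1}$ equals $\mathrm{sgn}(\sigma)\prod_i x_{\sigma(i)}^{i-k}/\prod_{r<s}(x_r-x_s)$, by combining $\prod_{r<s}x_{\sigma(r)}=\prod_i x_{\sigma(i)}^{k-i}$ with the Vandermonde antisymmetry of $\prod_{r<s}(x_{\sigma(r)}-x_{\sigma(s)})$. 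The sum over $\sigma\in S_k$ then collapses to the classical determinant $\det(x_j^{i-k})_{i,j=1}^k=\prod_i x_i^{1-k}\prod_{r<s}(x_s-x_r)$. The main bookkeeping obstacle is the concluding matching step: applying $(x_s-q^{-2}x_r)=-q^{-2}(x_r-q^2 x_s)$, one checks that $\prod_{r\ne s}(x_r-q^{-2}x_s)=(-1)^{k(k-1)/2}q^{-k(k-1)}\prod_{r<s}(x_r-q^2 x_s)(x_r-q^{-2}x_s)$, whereupon the signs and $q$-powers on the two sides of the combinatorial identity cancel precisely.
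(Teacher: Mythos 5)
Your proof is correct and follows essentially the same route as the paper: apply $\Upsilon$, identify $\Upsilon(\fm_1(u))=\delta(x_1/u)$, expand the iterated shuffle product as a sum over $S_k$ with delta-functions, and reduce to a Vandermonde-determinant evaluation. The only cosmetic difference is organizational: the paper pulls the $\sigma$-invariant numerator outside the symmetrization and states the residual sum as a separate ``simple identity'' (which it then observes is equivalent to the Vandermonde determinant formula), whereas you evaluate the alternating sum directly as $\det(x_j^{i-k})$ — but the underlying computation is identical.
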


\begin{proof}
Combining the key property $f(u)\delta(u/z)=f(z)\delta(u/z)$ of the delta-functions~\eqref{eq:generating series} with
$\Upsilon(\fm_1(z))=\delta(x_1/z)$ and evoking the definition of the shuffle product~\eqref{shuffle product}, we obtain:
\begin{multline*}
  \Upsilon\left(\Delta_{\fq}(u_1,\ldots,u_k) \fm_1(u_1)\cdots \fm_1(u_k) \delta\Big(\frac{u_1\cdots u_k}{z}\Big) \right) =
  (-q^2)^{\frac{k(k-1)}{2}} \,\times\\
  \prod_{1\leq r\ne s\leq k} (x_r-q^{-2}x_s) \delta\Big(\frac{x_1\cdots x_k}{z}\Big)
  \underset{x_1,\ldots,x_k}{\Sym}
  \left( \delta\left(\frac{x_1}{u_1}\right) \cdots \delta\left(\frac{x_k}{u_k}\right) \prod_{1\leq r<s\leq k} \frac{1}{x_r(x_r-x_s)} \right) \,.
\end{multline*}
Comparing the constant terms of both sides in the above equality, we get:
\begin{multline*}
  \CT_{u_1,\ldots,u_k} \left\{\Upsilon\left(\Delta_{\fq}(u_1,\ldots,u_k) \fm_1(u_1)\cdots \fm_1(u_k) \delta\Big(\frac{u_1\cdots u_k}{z}\Big) \right)\right\} = \\
  (-1)^{\frac{k(k-1)}{2}}q^{k(k-1)} \ \cdot \prod_{1\leq r\ne s\leq k} (x_r-q^{-2}x_s) \cdot \delta\Big(\frac{x_1\cdots x_k}{z}\Big) \cdot
  \underset{x_1,\ldots,x_k}{\Sym} \left( \prod_{1\leq r<s\leq k} \frac{1}{x_r(x_r-x_s)}\right) \,.
\end{multline*}
Combining this equality with the simple identity
\begin{equation}\label{eq:simple identity}
  \underset{x_1,\ldots,x_k}{\Sym} \left\{\prod_{1\leq r<s\leq k} \frac{1}{x_r(x_r-x_s)}\right\} =
  (-1)^{\frac{k(k-1)}{2}} \prod_{1\leq r\leq k} x_r^{1-k} \,,
\end{equation}
we obtain~\eqref{eq:m-integral} as a direct consequence of the shuffle realization~\eqref{eq:m-series-shuffle} of $\fm_k(z)$.
\end{proof}

\begin{Rem}
The equality~\eqref{eq:simple identity} is equivalent to
  $\underset{x_1,\ldots,x_k}{\Sym} \left\{\prod_{1\leq r<s\leq k} \frac{x_s}{x_s-x_r}\right\}=1$,
which is nothing but the standard Vandermonde determinant formula.
\end{Rem}


\subsection{Verifying the $M$-system relations through the shuffle algebra}\label{ssec M-system via shuffle}
\

Let us now explain how the shuffle approach also allows to establish the key relations of \cite[(2.1, 2.2)]{dfk1}
satisfied by $M_{k,n}$ of~\eqref{eq:M-terms}, thus providing a simple proof of~\cite[Theorem~2.11]{dfk1}.

\medskip
\noindent
We start with the following $q$-commutativity property:

\begin{Lem}\label{lem:q-commutativy}
(a) For any $k\geq 1$ and $m,n\in \BZ$ such that $-1\leq m-n\leq 2k-1$, we have:
\begin{equation}\label{eq:x-E-commutativity}
  [x^m,E_{k,n}]_{q^{2(m-n-k+1)}}=0 \,.
\end{equation}

\noindent
(b) For any $k\geq \ell\geq 1$ and $a,b\in \BZ$ such that $-1\leq a-b\leq 2k-2\ell+1$, we have:
\begin{equation}\label{eq:E-E-commutativity}
  [E_{\ell,a},E_{k,b}]_{q^{2\ell(a-b+\ell-k)}}=0 \,.
\end{equation}

\noindent
(c) For any $k\geq 1$, $n\in \BZ$, and a collection $\epsilon_1,\ldots,\epsilon_{k-1}\in \{0,1,2\}$,
the following $2k$ elements:
\begin{equation}\label{eq:q-commuting 2k elements}
  E_{k,n} \,,\, E_{k,n+1},E_{k-1,n+\epsilon_1} \,,\, E_{k-1,n+\epsilon_1+1} \,,\, \ldots \,,\,
  E_{1,n+\epsilon_1+\ldots+\epsilon_{k-1}} \,,\, E_{1,n+\epsilon_1+\ldots+\epsilon_{k-1}+1}
\end{equation}
pairwise $q$-commute and are in the $\Upsilon$-image of the subalgebra generated by $\{e_r\}_{r=n}^{n+2k-1}$.
\end{Lem}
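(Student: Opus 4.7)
The plan is to prove parts~(a), (b), and (c) in sequence. Part~(a) will be proved by direct shuffle computation, part~(b) by induction on $\ell$ using (a) as the base case, and part~(c) will follow as a corollary of (b) after a parameter range verification. For part~(a), I would unfold the $q$-commutator $[x^m, E_{k,n}]_\alpha$ using the shuffle product formula of Section~\ref{ssec shuffle algebra}. Since $E_{k,n}$ is symmetric and the $\zeta$-factors from the shuffle product combine with the polynomial factors of $E_{k,n}$ to form a totally symmetric expression, the $q$-commutator factors as $(-q^2)^k \prod_r x_r^n \prod_{r\ne s}(x_r-q^{-2}x_s)$ times the scalar sum
\[
  \Sigma:=\sum_{a=1}^{k+1}\frac{x_a^{m-n}}{\prod_{s\ne a}(x_a-x_s)}\left[\frac{1}{B_a}-\frac{\alpha q^{-2k}}{A_a}\right],
\]
where $A_a=\prod_{s\ne a}(x_a-q^{-2}x_s)$ and $B_a=\prod_{s\ne a}(x_a-q^2x_s)$. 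The vanishing of $\Sigma$ for $\alpha=q^{2(m-n-k+1)}$ would then follow from a residue argument: the two summands of $\Sigma$ can be identified (up to known scalars) with the sums of residues at the simple poles $z=x_a$ of the auxiliary rational functions $\Phi_1(z)=z^{m-n+1}/(B(z)R(z))$ and $\Phi_2(z)=z^{m-n+1}/(A(z)R(z))$, where $A(z)=\prod_{s=1}^{k+1}(z-q^{-2}x_s)$, $B(z)=\prod_{s=1}^{k+1}(z-q^2x_s)$, $R(z)=\prod_{s=1}^{k+1}(z-x_s)$. In the admissible range $-1\le m-n\le 2k-1$, each $\Phi_i$ has no pole at $z=0$ and no residue at $z=\infty$, so its total residue sum vanishes; an explicit evaluation of the residues at $z=q^{\mp 2} x_s$ yields the required cancellation with $\alpha q^{-2k}=q^{2(m-n-2k+1)}$.

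For part~(b), the induction will be on $\ell$, with base case $\ell=1$ being part~(a). For the inductive step, the recursion $E_{\ell,a}=c_\ell\cdot[x^a, E_{\ell-1,a+2}]_{q^{-2\ell}}$, with $c_\ell\in\BC(q)^\times$, is a direct consequence of Lemma~\ref{lem:E-commutator}. Expanding $E_{\ell,a}\cdot E_{k,b}$ and $\alpha\cdot E_{k,b}\cdot E_{\ell,a}$ via this recursion produces four terms each; applying part~(a) to commute $x^a$ past $E_{k,b}$ with factor $\alpha_1=q^{2(a-b-k+1)}$ (valid since $-1\le a-b\le 2k-2\ell+1\le 2k-1$) and the inductive hypothesis to commute $E_{\ell-1,a+2}$ past $E_{k,b}$ with factor $\alpha_2=q^{2(\ell-1)(a-b+\ell-k+1)}$ (valid since $-1\le a+2-b\le 2k-2(\ell-1)+1$), all eight terms collapse to $(\alpha_1\alpha_2-\alpha)\cdot E_{k,b}\cdot E_{\ell,a}$. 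A short exponent computation yields $\alpha_1\alpha_2=q^{2\ell(a-b+\ell-k)}=\alpha$, giving the claimed $q$-commutation.

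For part~(c), each listed element $E_{j,n+\sigma_j+\delta}$, with $\sigma_j:=\epsilon_1+\cdots+\epsilon_{k-j}$ and $\delta\in\{0,1\}$, lies in the $\Upsilon$-image of the subalgebra generated by $\{e_{n+\sigma_j+\delta+2i}\}_{i=0}^{j-1}$ by Lemma~\ref{lem:E-commutator}; since $0\le\sigma_j+\delta+2(j-1)\le 2(k-j)+1+2(j-1)=2k-1$, these indices all lie in $[n, n+2k-1]$. For the pairwise $q$-commutativity, any two listed elements $E_{j_1,n+s_1}$ and $E_{j_2,n+s_2}$ with $j_1\le j_2$ satisfy $s_1-s_2\in[-1,2(j_2-j_1)+1]$ because $\epsilon_i\in\{0,1,2\}$, which is precisely the admissible range of part~(b). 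The main obstacle will be the residue computation in part~(a); once that is in hand, the inductive step in~(b) is a clean algebraic manipulation, and~(c) reduces to a verification of parameter ranges.
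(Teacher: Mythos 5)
Your proposal is correct, but part~(a) takes a genuinely different route from the paper. The paper disposes of~(a) in two lines: it observes that $[x^m,E_{k,n}]_{q^{2(m-n-k+1)}}\in S_{k+1}$ vanishes under the specialization $x_k=q^2x_{k+1}$ (only the two ``adjacent'' terms of the symmetrization survive there, and they cancel), hence is divisible by $\prod_{1\leq r\ne s\leq k+1}(x_r-q^{-2}x_s)$ of total degree $k(k+1)$, while the commutator itself is a (Laurent) polynomial of total degree $m-n+k(k-1)<k(k+1)$ in the admissible range --- forcing it to vanish, with the case $m-n=-1$ handled by first clearing the denominator $x_1\cdots x_{k+1}$. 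Your approach instead computes the commutator explicitly: the reduction to the scalar sum $\Sigma$ is exactly right (I checked the prefactor $(-q^2)^k$ and the relative coefficient $\alpha q^{-2k}$), and the vanishing of $\Sigma$ does follow from the residue theorem applied to $\Phi_1(z)=z^{m-n+1}/(B(z)R(z))$ alone --- its residues at $z=x_a$ give the $B_a$-sum and its residues at $z=q^2x_s$ give precisely $q^{2(m-n+1)-4k}$ times the $A_a$-sum, so introducing $\Phi_2$ is redundant (though harmless); the hypotheses $-1\leq m-n\leq 2k-1$ are exactly what kills the residues at $0$ and $\infty$. Your method is heavier computationally but has the merit of deriving the exponent $2(m-n-k+1)$ rather than verifying it, and it makes the sharpness of the range transparent. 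For parts~(b) and~(c) you essentially reproduce the paper's argument: the paper deduces~(b) directly by commuting each $x^{a+2i}$ in the iterated $q$-commutator of Lemma~\ref{lem:E-commutator} past $E_{k,b}$, whereas you package the same computation as an induction on $\ell$ via the recursion $E_{\ell,a}=c_\ell[x^a,E_{\ell-1,a+2}]_{q^{-2\ell}}$; your exponent bookkeeping ($\alpha_1\alpha_2=q^{2\ell(a-b+\ell-k)}$) and the range checks in~(b) and~(c) are all correct and in fact more detailed than the paper's one-line treatment.
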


\begin{proof}
(a) It suffices to prove~\eqref{eq:x-E-commutativity} for $n=0$. We note that $[x^m,E_{k,0}]_{q^{2(m-k+1)}}\in S_{k+1}$
vanishes under the specialization $x_{k+1}=q^2x_{k}$, and thus it is divisible by $\prod_{1\leq r\ne s\leq k+1}(x_r-q^{-2}x_s)$.
If $0\leq m\leq 2k-1$, then $[x^m,E_{k,0}]_{q^{2(m-k+1)}}$ is a polynomial in $x_1,\ldots,x_{k+1}$ of the total degree $m+k(k-1)$.
This implies~\eqref{eq:x-E-commutativity} as $\mathrm{deg}(\prod_{1\leq r\ne s\leq k+1}(x_r-q^{-2}x_s))=k(k+1)>m+k(k-1)$.
If $m=-1$, then similarly $x_1\cdots x_{k+1}\cdot [x^{-1},E_{k,0}]_{q^{-2k}}\in S_{k+1}$ is a polynomial in $x_1,\ldots,x_{k+1}$
of the total degree $k^2$ which is divisible by the product $\prod_{1\leq r\ne s\leq k+1}(x_r-q^{-2}x_s)$ of the total degree
$k(k+1)>k^2$. Therefore, $[x^{-1},E_{k,0}]_{q^{-2k}}=0$ as well.

\medskip
\noindent
(b) As $-1\leq a-b, a+2-b,\ldots,a+2(\ell-1)-b\leq 2k-1$, \eqref{eq:E-E-commutativity} is in fact an immediate corollary
of~\eqref{eq:x-E-commutativity}, due to~\eqref{eq:q-commutators} that can be written as:
\begin{equation}\label{eq:q-commutators recast}
  E_{\ell,a}=(-1)^{\frac{\ell(\ell-1)}{2}}(1-q^{-2})^{1-\ell}
  [x^{a},[x^{a+2},\cdots,[x^{a+2(\ell-2)},x^{a+2(\ell-1)}]_{q^{-4}}\cdots]_{q^{-2(\ell-1)}}]_{q^{-2\ell}} \,.
\end{equation}

\medskip
\noindent
(c) The $q$-commutativity part follows from (b), while the second part is a consequence of~\eqref{eq:q-commutators recast}.
\end{proof}

As particular cases of~\eqref{eq:E-E-commutativity}, we obtain the following equalities:
\begin{equation*}
  [E_{k,1},E_{k,0}]_{q^{2k}}=0 \qquad \mathrm{and} \qquad
  [E_{\ell,k-\ell+\epsilon},E_{k,0}]_{q^{2\ell\epsilon}}=0 \quad \mathrm{for}\quad
  1\leq \ell\leq k \,,\, \epsilon\in \{-1,0,1\} \,.
\end{equation*}
Since $\Upsilon(M_{\alpha,n})\in S_\alpha$ is a multiple of $E_{\alpha,1-\alpha-n}$,
due to~\eqref{eq:M-shuffle}, we thus recover~\cite[(2.2)]{dfk1}:

\begin{Prop}\label{prop:dfk-part-1}
For any $\alpha,\beta\in \BN,\, n\in \BZ,\, \epsilon\in \{0,1\}$, the elements $M_{k,n}$ of~\eqref{eq:M-terms} satisfy:
\begin{equation}\label{eq:M-relations 1}
  M_{\alpha,n}M_{\beta,n+\epsilon}=\fq^{\min(\alpha,\beta)\epsilon}M_{\beta,n+\epsilon}M_{\alpha,n} \,.
\end{equation}
\end{Prop}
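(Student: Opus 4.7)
The plan is to transfer the identity to the shuffle algebra via $\Upsilon$ and then read it off from Lemma~\ref{lem:q-commutativy}(b). By~\eqref{eq:M-shuffle}, $\Upsilon(M_{\alpha,n}) = q^{\alpha(\alpha-1)}\,E_{\alpha,\,1-\alpha-n}$, and similarly for $M_{\beta,n+\epsilon}$. Since $\Upsilon$ is an algebra isomorphism, it suffices to prove the corresponding shuffle identity
\begin{equation*}
  E_{\alpha,\,1-\alpha-n}\star E_{\beta,\,1-\beta-n-\epsilon}
  \;=\;\fq^{\min(\alpha,\beta)\,\epsilon}\,
  E_{\beta,\,1-\beta-n-\epsilon}\star E_{\alpha,\,1-\alpha-n}\,,
\end{equation*}
with the scalar prefactors $q^{\alpha(\alpha-1)}$ and $q^{\beta(\beta-1)}$ cancelling from the two sides.

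First I would treat the case $\alpha\geq\beta$. Set $k=\alpha$, $\ell=\beta$, $b=1-\alpha-n$, and $a=1-\beta-n-\epsilon$, so that $a-b = \alpha-\beta-\epsilon$. Under the assumption $\alpha\geq\beta$ and $\epsilon\in\{0,1\}$, one checks directly that $-1\leq a-b\leq 2(\alpha-\beta)+1 = 2k-2\ell+1$, so the hypothesis of Lemma~\ref{lem:q-commutativy}(b) is satisfied. The exponent simplifies as $2\ell(a-b+\ell-k) = 2\beta\cdot(-\epsilon)$, giving $E_{\beta,a}\star E_{\alpha,b} = q^{-2\beta\epsilon}\,E_{\alpha,b}\star E_{\beta,a}$, equivalently $E_{\alpha,b}\star E_{\beta,a} = \fq^{\beta\epsilon}\,E_{\beta,a}\star E_{\alpha,b}$, which is the desired identity since $\min(\alpha,\beta)=\beta$.

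For the complementary case $\beta>\alpha$ I would apply Lemma~\ref{lem:q-commutativy}(b) with the roles of $k,\ell$ swapped, i.e.\ $k=\beta$, $\ell=\alpha$, $a=1-\alpha-n$, $b=1-\beta-n-\epsilon$; then $a-b = \beta-\alpha+\epsilon$, which lies in $[-1,\,2(\beta-\alpha)+1]$, and the resulting exponent is $2\alpha(a-b+\alpha-\beta)=2\alpha\epsilon$, yielding $M_{\alpha,n}M_{\beta,n+\epsilon} = \fq^{\alpha\epsilon}M_{\beta,n+\epsilon}M_{\alpha,n}$ with $\min(\alpha,\beta)=\alpha$. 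The boundary situations $\alpha=\beta$ (with $\epsilon\in\{0,1\}$) and $\epsilon=0$ (trivial $q$-commutativity) are covered by the same argument by inspecting the range of $a-b$.

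The key observation is that the whole proposition reduces, via the shuffle dictionary, to a statement about $q$-commutators of the very simple elements $E_{k,n}$ of~\eqref{eq:special-shuffle-elts}, and Lemma~\ref{lem:q-commutativy}(b) is already tailored to this. There is no genuine obstacle here; the only thing that requires care is bookkeeping of indices to confirm that $a-b$ lands in the admissible window $[-1,\,2k-2\ell+1]$ in each case and that the exponent $2\ell(a-b+\ell-k)$ evaluates to $\pm 2\min(\alpha,\beta)\epsilon$ with the correct sign.
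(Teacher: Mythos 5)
Your proof is correct and follows essentially the same route as the paper: apply $\Upsilon$ via~\eqref{eq:M-shuffle}, reduce to a $q$-commutativity statement for the $E_{k,n}$, and invoke Lemma~\ref{lem:q-commutativy}(b), the paper deriving the needed special case in the display immediately preceding the proposition. Your version simply makes the index bookkeeping (checking $a-b$ lies in $[-1,2k-2\ell+1]$ and evaluating the exponent $2\ell(a-b+\ell-k)$) explicit in both cases $\alpha\geq\beta$ and $\beta>\alpha$, and it is accurate.
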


We also have the following result (which together with Proposition~\ref{prop:dfk-part-1} constitute
the content of~\cite[Theorem~4.18]{dfk1}, thus providing a simple proof of~\cite[Theorem~2.11]{dfk1}):

\begin{Prop}\label{prop:dfk-part-2}
The elements~\eqref{eq:M-terms} satisfy the following $M$-system relation~\cite[(2.1)]{dfk1}:
\begin{equation}\label{eq:M-relations 2}
  M_{\alpha,n}^2-\fq^{\alpha}M_{\alpha,n+1}M_{\alpha,n-1}=M_{\alpha+1,n}M_{\alpha-1,n}
  \quad \mathrm{for\ any} \quad \alpha\geq 1 \,,\, n\in \BZ \,.
\end{equation}
\end{Prop}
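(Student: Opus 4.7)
The plan is to translate the $M$-system relation~\eqref{eq:M-relations 2} via the shuffle isomorphism $\Upsilon$ of Proposition~\ref{thm:shuffle iso} into an identity in $S^{(\ssl_2)}_{2\alpha}=\BC[x_1^{\pm 1},\ldots,x_{2\alpha}^{\pm 1}]^{S_{2\alpha}}$. Using the shuffle realization $\Upsilon(M_{k,n}) = q^{k(k-1)}E_{k,1-k-n}$ from~\eqref{eq:M-shuffle} together with $\fq = q^2$, and the bookkeeping $q^{\alpha(\alpha+1)+(\alpha-1)(\alpha-2)-2\alpha(\alpha-1)} = q^2$, the relation~\eqref{eq:M-relations 2} is equivalent to the shuffle identity
\[
  E_{\alpha,m}\star E_{\alpha,m} \;-\; q^{2\alpha}\,E_{\alpha,m-1}\star E_{\alpha,m+1} \;=\; q^{2}\,E_{\alpha+1,m-1}\star E_{\alpha-1,m+1}
\]
in $S_{2\alpha}$, where $m := 1-\alpha-n$. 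Since $E_{k,n+1} = (x_1\cdots x_k)\cdot E_{k,n}$ and the product $x_1\cdots x_{2\alpha}$ is symmetric, the uniform shift $m\mapsto m+c$ multiplies each of the three shuffle products by $(x_1\cdots x_{2\alpha})^c$; hence it suffices to verify the identity at $m=0$.

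Next, expanding each shuffle product using~\eqref{shuffle product} by grouping the $(2\alpha)!$ permutations according to the image subset $I\subset V:=\{1,\ldots,2\alpha\}$ of the left factor, and abbreviating $G_k(y_1,\ldots,y_k):=\prod_{r\ne s}(y_r-q^{-2}y_s)$, $X_J:=\prod_{j\in J}x_j$, and $P(I) := \prod_{i\notin I,j\in I}(x_i - q^{-2}x_j)\prod_{i\in I,j\notin I}(x_i - x_j)$, one obtains
\[
  E_{\alpha,n_1}\star E_{\alpha,n_2} \;=\; \sum_{|I|=\alpha}\frac{X_I^{n_1}X_{V\setminus I}^{n_2}\,G_\alpha(x_I)G_\alpha(x_{V\setminus I})\prod_{i\in I,j\notin I}(x_i - q^{-2}x_j)}{\prod_{i\in I,j\notin I}(x_i - x_j)},
\]
and an analogous formula for $E_{\alpha+1,-1}\star E_{\alpha-1,1}$. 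The basic telescoping identity
\[
  G_{|I|}(x_I)\,G_{|V\setminus I|}(x_{V\setminus I})\prod_{i\in I,j\notin I}(x_i - q^{-2}x_j) \;=\; \frac{G_{2\alpha}(x_1,\ldots,x_{2\alpha})}{\prod_{i\notin I,j\in I}(x_i-q^{-2}x_j)}
\]
holds irrespective of $|I|$. Cancelling the common factor $G_{2\alpha}$ and using $X_IX_{V\setminus I}=X_V$ in the first term, the desired identity reduces to the purely rational identity
\[
  X_V\sum_{|I|=\alpha}\frac{1}{P(I)} \;-\; q^{2\alpha}\sum_{|I|=\alpha}\frac{X_{V\setminus I}^{2}}{P(I)} \;=\; q^{2}\sum_{|I|=\alpha+1}\frac{X_{V\setminus I}^{2}}{P(I)}.
\]

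The main obstacle is verifying this rational identity for arbitrary $\alpha$. My plan is to do so by induction on $\alpha$, taking the specialization $x_{2\alpha}\to 0$: since $X_{V\setminus I}$ vanishes unless $2\alpha\in I$, only subsets containing $2\alpha$ contribute, and writing $I=\{2\alpha\}\sqcup I'$ with $I'\subset V':=\{1,\ldots,2\alpha-1\}$ one computes $P(I)|_{x_{2\alpha}=0}=(-1)^{|V\setminus I|}X_{V\setminus I}^2\cdot P'(I')$, where $P'$ is the analogue of $P$ built from $V'$. This collapses the identity at $x_{2\alpha}=0$ to $q^{2\alpha-2}\sum_{|I'|=\alpha-1}1/P'(I')=\sum_{|I'|=\alpha}1/P'(I')$ over subsets of $V'$, i.e.\ an identity of the very same shape but of rank one lower, to which the induction hypothesis applies. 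The base case $\alpha=1$ is a direct short calculation: all three shuffle products live in $S_2$, and an explicit symmetrization yields $E_{1,0}\star E_{1,0}=1+q^{-2}$, $q^2E_{1,-1}\star E_{1,1}=(1-q^2)+(x_1/x_2+x_2/x_1)$ and $q^2E_{2,-1}=(q^2+q^{-2})-(x_1/x_2+x_2/x_1)$, so both sides agree. A degree count (both sides are symmetric Laurent polynomials of total degree $2\alpha(\alpha-1)$) together with the inductive specialization pins down the identity uniquely.
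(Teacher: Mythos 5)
Your reduction of \eqref{eq:M-relations 2} to the shuffle identity
$E_{\alpha,m}^{\star 2}-q^{2\alpha}E_{\alpha,m-1}\star E_{\alpha,m+1}=q^{2}E_{\alpha+1,m-1}\star E_{\alpha-1,m+1}$,
the normalization of the $q$-power prefactors, the shift argument reducing to $m=0$, and the base case $\alpha=1$ are all correct and agree with the paper (which proves exactly this quadratic relation, \eqref{eq:E-quadratic}, by induction on $\alpha$). The explicit subset-sum expansion of the three shuffle products and the factorization of $G_{2\alpha}$ are also correct, and reducing to the stated rational identity in the $1/P(I)$'s is a legitimate alternative normal form.

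The gap is in the induction step. First, the specialization $x_{2\alpha}\to 0$ does \emph{not} produce ``an identity of the very same shape but of rank one lower'': the original identity has three terms, lives over a set of even cardinality $2\alpha$, and carries the weights $X_V$ and $X_{V\setminus I}^2$, whereas the specialized identity $q^{2\alpha-2}\sum_{|I'|=\alpha-1}1/P'(I')=\sum_{|I'|=\alpha}1/P'(I')$ has two unweighted terms over a set of \emph{odd} cardinality $2\alpha-1$. It is a different (and still nontrivial) statement, not an instance of your induction hypothesis, so the induction does not close as described. Second, and more fundamentally, even if the specialized identity were granted, vanishing of the difference at $x_{2\alpha}=0$ (hence, by symmetry, at each $x_i=0$) only shows that the polynomial $x_1\cdots x_{2\alpha}\cdot E'_\alpha$ is divisible by $x_1\cdots x_{2\alpha}$, i.e.\ that $E'_\alpha$ is an honest polynomial; since its total degree $2\alpha(\alpha-1)$ is positive for $\alpha\geq 2$, this is nowhere near forcing $E'_\alpha=0$, and your closing ``degree count pins down the identity uniquely'' does not hold. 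Contrast this with the paper's induction step: one specializes $(x_{2\ell-1},x_{2\ell})\mapsto(y,q^2y)$ and shows $E'_\ell(x_1,\ldots,x_{2\ell-2},y,q^2y)$ is an explicit multiple of $E'_{\ell-1}(x_1,\ldots,x_{2\ell-2})$, which \emph{is} the induction hypothesis; by symmetry this makes $x_1\cdots x_{2\ell}\cdot E'_\ell$ (degree $2\ell^2$) divisible by $\prod_{r\ne s}(x_r-q^{-2}x_s)$ (degree $2\ell(2\ell-1)>2\ell^2$ for $\ell>1$), which does force $E'_\ell=0$. To repair your argument you would need to replace the $x_{2\alpha}\to 0$ specialization by one of this strength, e.g.\ $x_{2\alpha}=q^{\pm 2}x_{2\alpha-1}$, and check that it reproduces the $\alpha-1$ instance of your rational identity.
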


Due to~\eqref{eq:M-shuffle}, this is a direct consequence of the corresponding relation for $E_{k,n}$ of~\eqref{eq:special-shuffle-elts}:

\begin{Lem}
For any $k\geq 1$ and $n\in \BZ$, the following quadratic relation holds in $S=S^{(\ssl_2)}$:
\begin{equation}\label{eq:E-quadratic}
  E_{k,n}^2-q^{2k} E_{k,n-1}\star E_{k,n+1}=q^2 E_{k+1,n-1}\star E_{k-1,n+1} \,.
\end{equation}
\end{Lem}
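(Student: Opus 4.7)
The plan is to verify~\eqref{eq:E-quadratic} directly in the shuffle algebra $S = S^{(\ssl_2)}$. First I would reduce to the case $n=1$: since $E_{k,n}(x_1,\ldots,x_k) = (x_1\cdots x_k)^n \cdot \prod_{r\ne s}(x_r-q^{-2}x_s)$, the fully symmetric factor $(x_1\cdots x_{2k})^{n-1}$ commutes with the $\Sym$-operation and therefore pulls out of each of the three shuffle products in~\eqref{eq:E-quadratic}. Explicitly, one gets $E_{k,n}^{\star 2} = (x_1\cdots x_{2k})^n\, E_{k,0}^{\star 2} = (x_1\cdots x_{2k})^{n-1}\, E_{k,1}^{\star 2}$, while $E_{k,n-1}\star E_{k,n+1} = (x_1\cdots x_{2k})^{n-1}\, E_{k,0}\star E_{k,2}$ and $E_{k+1,n-1}\star E_{k-1,n+1} = (x_1\cdots x_{2k})^{n-1}\, E_{k+1,0}\star E_{k-1,2}$, so the general identity reduces to
\[
  E_{k,1}^{\star 2} - q^{2k}\, E_{k,0}\star E_{k,2} \,=\, q^2\, E_{k+1,0}\star E_{k-1,2} \quad \mathrm{in}\quad S_{2k}.
\]

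Next, I would unpack the symmetrization formula~\eqref{shuffle product} term-by-term. Writing $\pi_I := \prod_{r\in I}x_r$, $P_I := \prod_{r\ne s\in I}(x_r-q^{-2}x_s)$, $A_{IJ} := \prod_{r\in I,\, s\in J}(x_r-q^{-2}x_s)$, $D_{IJ} := \prod_{r\in I,\, s\in J}(x_r-x_s)$, and using the $(k!)^2$-fold block-internal symmetry of the integrand to cancel the $1/(k!)^2$ prefactor, one obtains
\[
  E_{k,a}\star E_{k,b} \,=\, \sum_{I\subset \{1,\ldots,2k\},\, |I|=k} \pi_I^a \pi_J^b \cdot \frac{P_I\, P_J\, A_{IJ}}{D_{IJ}}\,,
\]
together with the analogous formula (with $|I|=k+1$) for $E_{k+1,0}\star E_{k-1,2}$. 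Dividing the reduced identity by the $S_{2k}$-symmetric factor $P_{2k}:=\prod_{r\ne s\in\{1,\ldots,2k\}}(x_r-q^{-2}x_s) = P_I P_J A_{IJ} A_{JI}$, where $A_{JI}:=\prod_{r\in I,\, s\in J}(x_s-q^{-2}x_r)$, and using the basic observation $\pi_I \pi_J = x_1\cdots x_{2k}$, the identity reduces to the purely rational assertion
\[
  \sum_{|I|=k}\frac{x_1\cdots x_{2k} \,-\, q^{2k}\,\pi_J^2}{A_{JI}\, D_{IJ}} \,=\, q^2 \sum_{|I|=k+1}\frac{\pi_J^2}{A_{JI}\, D_{IJ}}.
\]

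The remaining step, where the main obstacle lies, is to establish this combinatorial identity of rational functions in $x_1,\ldots,x_{2k}$. The strategy I would pursue is to reindex each $(k+1)$-subset as $I'=I\sqcup\{m\}$ with $|I|=k$ and $m\in J:=\{1,\ldots,2k\}\setminus I$, then use the explicit ratio
\[
  \frac{A_{JI}\, D_{IJ}}{A_{(J\setminus m)(I\cup m)}\, D_{(I\cup m)(J\setminus m)}} \,=\, \frac{\prod_{r\in I}(x_m-q^{-2}x_r)(x_r-x_m)}{\prod_{s\in J,\, s\ne m}(x_s-q^{-2}x_m)(x_m-x_s)}
\]
to rewrite the right-hand side as $q^2\sum_{|I|=k}\frac{1}{A_{JI} D_{IJ}}\sum_{m\in J}(\cdots)$, after which the inner sum over $m\in J$ is of Lagrange-interpolation type. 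Applied to the polynomial $y\mapsto y\,\prod_{r\in I}(y-q^{-2}x_r)(x_r-y)$ of degree $2k$ in $y$, Lagrange interpolation should evaluate this inner sum to exactly $q^{-2}\bigl(x_1\cdots x_{2k} - q^{2k}\pi_J^2\bigr)$, yielding the required identity term-by-term for each $|I|=k$. The hard part will be the careful sign- and $q$-power bookkeeping in this Lagrange evaluation, especially accounting for the residue at $y=\infty$; as a sanity check, the $k=1$ case reduces to the elementary identity $(q^{-2}+q^2)x_1 x_2 - (x_1^2+x_2^2) = q^2\bigl((1+q^{-4})x_1 x_2 - q^{-2}(x_1^2+x_2^2)\bigr)$, and an alternative induction on $k$ (matching the leading coefficient in $x_{2k}$, which reduces to the $(k-1)$-instance of~\eqref{eq:E-quadratic}) can be used as a fallback should the interpolation argument prove too delicate.
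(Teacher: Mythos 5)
Your reduction is sound as far as it goes: pulling out the symmetric factor $(x_1\cdots x_{2k})^{n-1}$, expanding each shuffle product as a sum over $k$-element subsets $I\subset\{1,\ldots,2k\}$ (the $1/(k!)^2$ does cancel against the block-internal symmetry), and dividing by $P_{2k}=P_IP_JA_{IJ}A_{JI}$ correctly reduces the Lemma to the rational identity
\begin{equation*}
  \sum_{|I|=k}\frac{x_1\cdots x_{2k}-q^{2k}\pi_J^2}{A_{JI}\,D_{IJ}}
  \;=\;q^2\sum_{|I'|=k+1}\frac{\pi_{J'}^2}{A_{J'I'}\,D_{I'J'}}\,,
\end{equation*}
and that identity is true (it checks out for $k=1$). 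This is a genuinely different route from the paper, which never expands the shuffle products. The problem is your last step. First, the reindexing $I'=I\sqcup\{m\}$ is $(k+1)$-to-one, so the double sum over $(I,m)$ equals $(k+1)$ times the right-hand side; this factor is nowhere accounted for. Second, and decisively, the claimed term-by-term equality for fixed $I$ is false. Already for $k=1$, $I=\{1\}$, $J=\{2\}$, the inner sum consists of the single term $(x_2-q^{-2}x_1)(x_1-x_2)$, whereas your claimed value $q^{-2}(x_1x_2-q^2x_2^2)=-x_2(x_2-q^{-2}x_1)$ differs from it; equivalently, $q^2\cdot\frac{\pi_{J'}^2}{A_{J'I'}D_{I'J'}}=q^2$ while the matching left-hand term is $\frac{x_1x_2-q^2x_2^2}{(x_2-q^{-2}x_1)(x_1-x_2)}=\frac{-q^2x_2}{x_1-x_2}$. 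The identity holds only after summing over $I$, so no evaluation of the inner sum alone can close the argument. (Also, your interpolation polynomial $y\prod_{r\in I}(y-q^{-2}x_r)(x_r-y)$ has degree $2k+1$, not $2k$, and the denominators $\prod_{s\in J\setminus m}(x_s-q^{-2}x_m)(x_m-x_s)$ are not of Lagrange form, since the node set varies with $m$.)

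Your fallback is likewise incomplete as stated: matching the leading coefficient in $x_{2k}$ of two polynomials does not prove they coincide. That one-coefficient trick works in Lemma~\ref{lem:E-commutator} only because proportionality of the two sides is established beforehand. For the present Lemma the paper supplies the missing structural input as follows: writing $E'_k$ for the difference of the two sides of~\eqref{eq:E-quadratic} (at $n=0$), it computes that $E'_k(x_1,\ldots,x_{2k-2},y,q^2y)$ is an explicit multiple of $E'_{k-1}(x_1,\ldots,x_{2k-2})$, hence vanishes by induction; this forces $E'_k$ to be divisible by $\prod_{r\ne s}(x_r-q^{-2}x_s)$, of degree $2k(2k-1)$, while $x_1\cdots x_{2k}\cdot E'_k$ has total degree only $2k^2<2k(2k-1)$ for $k>1$, whence $E'_k=0$. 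If you wish to keep your subset expansion, you still need an argument of this kind, or else a correct closed evaluation of the full sum over $I$ rather than a term-by-term one.
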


\begin{proof}
It suffices to prove~\eqref{eq:E-quadratic} for $n=0$, that is, to show that the shuffle element
\begin{equation}\label{eq:the equality}
  E'_{k}:=E_{k,0}\star E_{k,0} - q^{2k} E_{k,-1}\star E_{k,1} - q^2 E_{k+1,-1} \star E_{k-1,1} \in S_{2k}
\end{equation}
vanishes. We prove~\eqref{eq:the equality} by induction on $k\geq 1$, the base case $k=1$ following
immediately from Proposition~\ref{prop:m-via-CT} (applied to $k=2$).

For the step of induction (assuming that~\eqref{eq:the equality} holds for all $k<\ell$), it suffices to prove
\begin{equation}\label{eq:quadratic-E-specialization}
  E'_{\ell}(x_1,\ldots,x_{2\ell-2},y,q^2y)=0 \,.
\end{equation}
Indeed,~\eqref{eq:quadratic-E-specialization} implies that $x_1\cdots x_{2\ell}\cdot E'_\ell(x_1,\ldots,x_{2\ell})$ is a polynomial in
$x_1,\ldots,x_{2\ell}$ of the total degree $2\ell^2$ which is divisible by the product $\prod_{1\leq r\ne s \leq 2\ell} (x_r-q^{-2}x_s)$
of degree $2\ell(2\ell-1)$. As $4\ell^2-2\ell>2\ell^2$ for $\ell>1$, we thus obtain $E'_\ell(x_1,\ldots,x_{2\ell})=0$ which establishes
the step of induction. Finally, the equality~\eqref{eq:quadratic-E-specialization} follows from the following straightforward computation:
\begin{equation*}
  E'_{\ell}(x_1,\ldots,x_{2\ell-2},y,q^2y)=
  (1+q^{-2})q^{-6(\ell-1)}\prod_{r=1}^{2\ell-2} (x_r-q^{-2}y)(x_r-q^{4}y)\cdot E'_{\ell-1}(x_1,\ldots,x_{2\ell-2})=0
\end{equation*}
with the latter equality due to the induction hypothesis.
\end{proof}

\begin{Rem}
We note that similar shuffle interpretations of the relations~(\ref{eq:M-relations 1},~\ref{eq:M-relations 2})
were suggested (without a proof) in~\cite[Lemma 8.5]{dfk2}.
\end{Rem}


\subsection{Comparison of the difference operators I}\label{ssec dfk1-difference operators}
\

Let us now compare the realization of the $M$-system by difference operators as presented in~\cite[\S6]{dfk1}
with the construction of Section~\ref{sec affine}. To this end, we fix $r\in \BN$ and let $\CB^{\fq}_\fra$ denote
the $\BC(\fq^{\pm 1/2})$-algebra generated by $\{x_i^{\pm 1},\Gamma_i^{\pm 1}\}_{i=1}^{r+1}$,
being further localized by the multiplicative set generated by $\{x_i-\fq^m x_j\}_{i\ne j}^{m\in \BZ}$,
with all elements pairwise commuting except for $\Gamma_ix_i=\fq x_i\Gamma_i$.
Following~\cite[\S6]{dfk1}, consider the following series in $z$ with coefficients
in~$\CB^{\fq}_\fra$:
\begin{equation}\label{eq:efh}
\begin{split}
  & \mathfrak{e}(z)^{\DFK}=\sum_{i=1}^{r+1} \delta\left(\fq^{1/2}x_i z\right) \prod_{1\leq j\leq r+1}^{j\ne i} \frac{x_i}{x_i-x_j} \Gamma_i \,, \\
  & \mathfrak{f}(z)^{\DFK}=\sum_{i=1}^{r+1} \delta\left(\fq^{-1/2}x_i z\right) \prod_{1\leq j\leq r+1}^{j\ne i} \frac{x_j}{x_j-x_i} \Gamma_i^{-1} \,, \\
  & \mathfrak{\psi}^+(z)^{\DFK}=(-\fq^{-1/2}z)^{r+1}\cdot \prod_{i=1}^{r+1} x_i \cdot
    \prod_{i=1}^{r+1} \left(1-\fq^{1/2}x_i z\right)^{-1}\left(1-\fq^{-1/2}x_i z\right)^{-1} \,, \\
  & \mathfrak{\psi}^-(z)^{\DFK}=(-\fq^{1/2}z)^{-r-1}\cdot \prod_{i=1}^{r+1} x_i^{-1} \cdot
    \prod_{i=1}^{r+1} \left(1-\fq^{1/2}x_i^{-1}z^{-1}\right)^{-1} \left(1-\fq^{-1/2}x_i^{-1}z^{-1}\right)^{-1} \,.
\end{split}
\end{equation}

We shall now identify these currents and those in the construction from Section~\ref{sec affine} in the special case of
$\fg=\ssl_2$, $\mu=-(2r+2)\omega$ with $\omega$ being the fundamental coweight of $\ssl_2$, $\lambda=0$, so that $a=r+1$.
To this end, we identify $\iota\colon \wt{\CA}^q_\fra \,\iso\, \CB^{\fq}_\fra$ via
\begin{equation}\label{eq:identify A-to-B}
  \iota\colon q \mapsto \fq^{1/2} \,,\quad \sw^{\pm 1}_i \mapsto x_i^{\mp 1}\fq^{\mp 1/2} \,,\quad
  D_i^{\pm 1} \mapsto \Gamma_i^{\mp 1} \,,\qquad 1\leq i\leq r+1 \,,
\end{equation}
as well as the corresponding shifted quantum affine algebras
$\jmath\colon U^{\ssc}_{-r-1,-r-1} \,\iso\, U^{\ssc}_{0,-2r-2}$ via
\begin{equation*}
  \jmath\colon e(z) \mapsto z^{-r-1}e(z) \,,\quad f(z) \mapsto f(z) \,,\quad \psi^\pm(z) \mapsto z^{-r-1} \psi^\pm(z) \,.
\end{equation*}
Define the composition:
\begin{equation}\label{eq:comparison dfk1}
  \bar{\Phi}_{r+1}\colon
  U^{\ssc}_{-r-1,-r-1} \, \overset{\jmath}{\iso}\, U^{\ssc}_{0,-2r-2} \, \overset{\wt{\Phi}^0_{-2r-2}}{\longrightarrow}\,
  \wt{\CA}^q_\fra \, \overset{\iota}{\iso} \, \CB^{\fq}_{\fra} \,.
\end{equation}
The following is straightforward:

\begin{Lem}\label{lem:dfk-vs-gklo-1}
The currents~\eqref{eq:efh} can be expressed as:
\begin{equation*}
\begin{split}
  & \fe(z)^{\DFK}=(-1)^r(\fq^{1/2}-\fq^{-1/2})\bar{\Phi}_{r+1}(e(z)) \,, \\ 
  & \ff(z)^{\DFK}=(1-\fq)\bar{\Phi}_{r+1}(f(z)) \,, \\
  & \mathfrak{\psi}^+(z)^{\DFK}=(-1)^{r+1}\bar{\Phi}_{r+1}(\psi^-(z)) \,, \\ 
  & \mathfrak{\psi}^-(z)^{\DFK}=(-1)^{r+1}\bar{\Phi}_{r+1}(\psi^+(z)) \,.
\end{split}
\end{equation*}
\end{Lem}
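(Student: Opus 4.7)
The plan is to perform a direct computation: specialize Proposition~\ref{prop:homomorphism} to $\fg=\ssl_2$, $\lambda=0$, $\mu=-2(r+1)\omega$ (so that $a=r+1$, the numerator $\sZ_1(z)$ is trivial, and all products over neighbouring vertices are empty), then transport the resulting formulas through $\jmath$ and $\iota$, and match them with~\eqref{eq:efh} term by term. Under this specialization, \eqref{eq:GKLO-assignment affine} collapses to
\begin{equation*}
  \wt{\Phi}^0_{-2r-2}(e(z))=\frac{-q}{1-q^2}\prod_{t=1}^{r+1}\sw_t\cdot\sum_{i=1}^{r+1}\delta\!\left(\frac{\sw_i}{z}\right)\prod_{j\ne i}\frac{\sw_i}{\sw_i-\sw_j}\,D_i^{-1} \,,
\end{equation*}
with analogous simplifications for $f(z)$ and $\psi^\pm(z)$.

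The next step is to apply $\iota$ from~\eqref{eq:identify A-to-B}: the substitutions $\iota(\sw_i)=x_i^{-1}\fq^{-1/2}$, $\iota(D_i^{\pm 1})=\Gamma_i^{\mp 1}$, together with the elementary identities $\iota(\delta(\sw_i/z))=\delta(\fq^{1/2}x_iz)$, $\iota(\delta(q^2\sw_i/z))=\delta(\fq^{-1/2}x_iz)$, $\iota\!\left(\frac{\sw_i}{\sw_i-\sw_j}\right)=\frac{x_j}{x_j-x_i}$, and $\iota\!\left(\prod_t\sw_t\right)=\fq^{-(r+1)/2}\prod_t x_t^{-1}$, will convert each of the three images into the corresponding DFK current from~\eqref{eq:efh} up to an overall scalar and, for the $e$- and $\psi^\pm$-currents, up to an overall power of $z$. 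That remaining power of $z$ is supplied by the shift $\jmath$, which multiplies $e(z)$ and $\psi^\pm(z)$ by $z^{-r-1}$ and fixes $f(z)$.

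For the $e$-current, the plan is to absorb the $z^{-r-1}$ into the $i$-th summand via the formal identity $z^{-r-1}\delta(\fq^{1/2}x_iz)=(\fq^{1/2}x_i)^{r+1}\delta(\fq^{1/2}x_iz)$; combining the resulting factor $x_i^{r+1}$ with $\prod_t x_t^{-1}\cdot\prod_{j\ne i}\frac{x_j}{x_j-x_i}$ and invoking $\prod_{j\ne i}(x_j-x_i)=(-1)^r\prod_{j\ne i}(x_i-x_j)$ produces $(-1)^r\prod_{j\ne i}\frac{x_i}{x_i-x_j}$, which is precisely the Lagrange-type coefficient in $\fe(z)^{\DFK}$. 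Assembling the remaining scalar prefactors and using $\fq^{1/2}-\fq^{-1/2}=\fq^{-1/2}(\fq-1)$ will then yield the first identity. The $f$-identity will follow from the same $\iota$-step with no further work, since $\jmath$ is trivial on $f(z)$. For the $\psi^\pm$ identities, one may rewrite $\iota\bigl((W(z)W(q^{-2}z))^{-1}\bigr)$ using $(z-x_r^{-1}\fq^{\pm 1/2})=-x_r^{-1}\fq^{\pm 1/2}(1-\fq^{\mp 1/2}x_rz)$; matching the $\pm$ expansion conventions (in $z^{\mp 1}$) will then force the swap $\psi^{\pm}\leftrightarrow\mathfrak{\psi}^{\mp}$, and the $z^{-r-1}$ shift from $\jmath$ will combine with the explicit prefactor $(-\fq^{\mp 1/2}z)^{\mp(r+1)}$ in~\eqref{eq:efh} to produce the sign $(-1)^{r+1}$.

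The only real care the argument demands is sign bookkeeping — the $(-1)^r$ from reversing a size-$r$ product of linear factors $(x_j-x_i)\leftrightarrow(x_i-x_j)$, and the $(-1)^{r+1}$ from $(-\fq^{\pm 1/2})^{\pm(r+1)}$. No conceptual obstacle arises.
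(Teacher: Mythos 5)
Your proposal is correct and is precisely the computation the paper has in mind: the paper offers no written proof (it labels the lemma ``straightforward''), and the intended argument is exactly your direct specialization of~\eqref{eq:GKLO-assignment affine} followed by transport through $\jmath$ and $\iota$ and term-by-term matching with~\eqref{eq:efh}. All of your sign and scalar bookkeeping (the $(-1)^r$ from reversing $\prod_{j\ne i}(x_j-x_i)$, the cancellation of $\fq^{\pm(r+1)/2}$, and the $(-1)^{r+1}$ from $(-\fq^{\mp 1/2}z)^{\pm(r+1)}$ together with the swap of expansion conventions forcing $\psi^\pm\leftrightarrow\mathfrak{\psi}^\mp$) checks out.
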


In particular, this immediately shows that the currents~\eqref{eq:efh} indeed satisfy the relations
of~\cite[(5.7)--(5.11)]{dfk1}. Furthermore, we also immediately obtain~\cite[(6.1)]{dfk1}:

\begin{Prop}\label{prop:M-operators}
Under the assignment $\sum_{n\in \BZ} M_{1,n}z^n=\fm_1(z)\mapsto \mathfrak{e}(\fq^{-1/2}z)^{\mathrm{DFK}}$,
the elements $\{M_{k,n}\}_{k\geq 1}^{n\in \BZ}$ of~\eqref{eq:M-terms} are mapped to:
\begin{equation}\label{eq:M-difference operator}
  M_{k,n} \ \mapsto \sum_{J\subset \{1,\ldots,r+1\}}^{|J|=k}
  \prod_{i\in J} x_i^n \cdot \prod_{i\in J}^{j\notin J} \frac{x_i}{x_i-x_j} \cdot \prod_{i\in J} \Gamma_i \,.
\end{equation}
\end{Prop}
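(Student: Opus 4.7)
The plan is to combine the shuffle realization~\eqref{eq:M-shuffle} of $M_{\alpha,n}$ with Lemma~\ref{lem:image of important elements}(a), and then reconcile the scalar twists introduced by the assignment $\fm_1(z)\mapsto \fe(\fq^{-1/2}z)^{\DFK}$. I first verify the proposition for $\alpha=1$ directly: unpacking $\fe(\fq^{-1/2}z)^{\DFK}$ via~\eqref{eq:efh}, the identity $\delta(\fq^{1/2}x_i\fq^{-1/2}z)=\delta(x_iz)=\sum_n x_i^n z^n$ immediately produces the $\alpha=1$ case of~\eqref{eq:M-difference operator}. Since $\{M_{1,n}\}_{n\in\BZ}$ generate the positive half $U^{\ssc,>}_{-r-1,-r-1}\iso U^>_q(L\ssl_2)$, the assignment extends uniquely to an algebra homomorphism; Lemma~\ref{lem:dfk-vs-gklo-1} together with the substitution $z\mapsto\fq^{-1/2}z$ rewrites it on generators as $M_{1,n}\mapsto (-1)^r(q-q^{-1})q^{-n}\,\bar{\Phi}_{r+1}(e_{-n})$.

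I then propagate this to all $M_{\alpha,n}$ via the $\fq$-commutator recursion~\eqref{eq:M-terms}: bilinearity of the $\fq$-bracket and multiplicativity of $\bar{\Phi}_{r+1}$ cause the scalar prefactors to accumulate multiplicatively, and the identity $\sum_{k=0}^{\alpha-1}(n-\alpha+1+2k)=\alpha n$ yields
\begin{equation*}
  M_{\alpha,n}\ \longmapsto\ (-1)^{r\alpha}(q-q^{-1})^{\alpha} q^{-\alpha n}\,\bar{\Phi}_{r+1}(M_{\alpha,n}) \,,
\end{equation*}
so it remains to evaluate $\bar{\Phi}_{r+1}(M_{\alpha,n})=\iota\circ \wt{\Phi}^0_{-2r-2}\circ \jmath(M_{\alpha,n})$ explicitly.

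For that evaluation I combine~\eqref{eq:M-shuffle} with $\jmath\colon e_m\mapsto e_{m-r-1}$ to identify $\Upsilon(\jmath(M_{\alpha,n}))=\wt{E}_\alpha\!\left((q-q^{-1})^{-\alpha}(x_1\cdots x_\alpha)^{-\alpha-n-r}\right)$ in the notation of~\eqref{eq:important E-elements}, then apply Lemma~\ref{lem:image of important elements}(a) in the specialization $\fg=\ssl_2$, $\lambda=0$, $a=r+1$ (so $\sZ_i\equiv 1$ and no neighboring vertices contribute): the multi-partition sum of~\eqref{eq:image of important E} collapses to a single sum over $\alpha$-subsets $J\subset\{1,\ldots,r+1\}$. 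Translating the result into $\CB^{\fq}_\fra$ via $\iota$ of~\eqref{eq:identify A-to-B}, the rewrite $\sw_i/(\sw_i-\sw_j)\mapsto x_j/(x_j-x_i)$ together with the consolidation of the monomial factors yields the intermediate formula
\begin{equation*}
  \bar{\Phi}_{r+1}(M_{\alpha,n}) = (-1)^{\alpha(r+1-\alpha)}(q-q^{-1})^{-\alpha}q^{\alpha n}\!\sum_{|J|=\alpha}\prod_{i\in J}x_i^{n+r+1-\alpha}\!\!\prod_{\substack{i\in J\\ j\notin J}}\frac{1}{x_i-x_j}\prod_{i\in J}\Gamma_i \,.
\end{equation*}
Multiplying by $(-1)^{r\alpha}(q-q^{-1})^{\alpha}q^{-\alpha n}$ cancels all $q$-powers and powers of $(q-q^{-1})$, the sign identity $(-1)^{r\alpha+\alpha(r+1-\alpha)}=(-1)^{\alpha(\alpha-1)}=1$ normalizes the constant, and the counting $\prod_{i\in J}x_i^{r+1-\alpha}=\prod_{i\in J, j\notin J}x_i$ converts the exponent $x_i^{n+r+1-\alpha}$ into $x_i^n$ while absorbing the missing numerator, reproducing~\eqref{eq:M-difference operator}. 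The main obstacle I anticipate is the dense bookkeeping of signs and powers of $q$ through the fourfold composition $\iota\circ\wt{\Phi}^0_{-2r-2}\circ \jmath$ combined with the $\fq$-commutator twist; otherwise, the argument is a direct specialization of tools already developed in Section~\ref{sec affine}.
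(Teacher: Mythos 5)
Your argument is correct and takes essentially the same route as the paper: the paper's proof is a one-line combination of Lemma~\ref{lem:dfk-vs-gklo-1}, the shuffle realization~\eqref{eq:M-shuffle}, and the shuffle realization of $\wt{\Phi}^0_{-2r-2}$ from Theorem~\ref{thm:shuffle homomorphism}(a) (which you instantiate through Lemma~\ref{lem:image of important elements}(a)), exactly the three ingredients you assemble. Your bookkeeping of the scalar twist $(-1)^{r\alpha}(q-q^{-1})^{\alpha}q^{-\alpha n}$, the identification $\Upsilon(\jmath(M_{\alpha,n}))=\wt{E}_\alpha\bigl((q-q^{-1})^{-\alpha}(x_1\cdots x_\alpha)^{-\alpha-n-r}\bigr)$, and the final cancellation of signs and powers of $q$ all check out.
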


\begin{proof}
Formula~\eqref{eq:M-difference operator} immediately follows by combining Lemma~\ref{lem:dfk-vs-gklo-1} with
the shuffle realization~\eqref{eq:M-shuffle} of the elements $M_{k,n}$ and the shuffle realization of
$\wt{\Phi}^0_{-2r-2}$ from Theorem~\ref{thm:shuffle homomorphism}(a).
\end{proof}


\subsection{Finite set of generators}\label{ssec segmental}
\

We shall follow the setup of the previous subsection, that is, $\fg=\ssl_2$, $\lambda=0$, $\mu=-(2r+2)\omega$.
The last result of this section explains why it essentially suffices to consider only $\wt{\Phi}^0_{-2r-2}(E_{k,n})$:

\begin{Lem}\label{lem:finite generators}
For any $n\in \BZ$, the $\BC(q)$-subalgebra of $\wt{\CA}^q_\fra$ generated by
$\{\wt{\Phi}^0_{-2r-2}(e_{p})\}_{p=n}^{n+2r+1}$ and further localized at
$\{\wt{\Phi}^0_{-2r-2}(\Upsilon^{-1}(E_{r+1,p}))\}_{p=n}^{n+1}$ coincides with
all image $\wt{\Phi}^0_{-2r-2}(U^{(-2r-2)}_q)$.
\end{Lem}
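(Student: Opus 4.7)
My plan is to prove the non-trivial inclusion $\wt{\Phi}^0_{-2r-2}(U^{(-2r-2)}_q)\subseteq$ (localized subalgebra); the reverse inclusion is elementary since the two localization elements are themselves images of elements of $U^{\mu,>}_q\subseteq U^{(-2r-2)}_q$. The strategy is to generate each factor of the triangular decomposition.

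First, I build the two localization elements. By Lemma~\ref{lem:E-commutator}, each $\Upsilon^{-1}(E_{r+1,p})$ is an iterated $q$-commutator of $e_p,e_{p+2},\ldots,e_{p+2r}$, so for $p=n,n+1$ the indices all lie in $[n,n+2r+1]$ and both $\wt{\Phi}^0_{-2r-2}(\Upsilon^{-1}(E_{r+1,n}))$ and $\wt{\Phi}^0_{-2r-2}(\Upsilon^{-1}(E_{r+1,n+1}))$ already sit in the non-localized subalgebra. Specializing Lemma~\ref{lem:image of important elements}(a) to $\fg=\ssl_2$, $\unl{k}=\unl{a}=(r+1)$, $g=\prod_s x_s^p$, and $\sZ_i\equiv 1$, the summation over size-$(r+1)$ subsets $J\subseteq\{1,\ldots,r+1\}$ collapses to the full set $J=\{1,\ldots,r+1\}$, yielding
\[
  \wt{\Phi}^0_{-2r-2}\bigl(\Upsilon^{-1}(E_{r+1,p})\bigr)=c_p\cdot \Pi^{2r+1+p}\Delta,\qquad \Pi:=\prod_t \sw_t,\ \Delta:=\prod_t D_t^{-1},\ c_p\in\BC(q)^\times.
\]
The ratio of the two localization elements is a scalar multiple of $\Pi$, so $\Pi^{\pm 1}$ (and hence also $\Delta^{\pm 1}$) lie in the localized subalgebra.

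Next, I invoke the Cayley--Hamilton identity $\sw_i^{r+1}=\sum_{k=1}^{r+1}(-1)^{k+1}e_k(\sw)\,\sw_i^{r+1-k}$, satisfied by each $\sw_i$ as a root of $\prod_s(t-\sw_s)$. Setting $\mathcal{S}_\ell:=\sum_i\sw_i^\ell\,\prod_{s\neq i}(\sw_i-\sw_s)^{-1}\,D_i^{-1}$, so that $\wt{\Phi}^0_{-2r-2}(e_p)\propto\Pi\,\mathcal{S}_{p+r}$, I multiply the identity on the right by $\sw_i^m\prod_{s\neq i}(\sw_i-\sw_s)^{-1}D_i^{-1}$ and sum over $i$; since $e_k(\sw)$ commutes with every $\sw_j$, this produces the linear recurrence
\[
  \mathcal{S}_{m+r+1}\;=\;\sum_{k=1}^{r+1}(-1)^{k+1}e_k(\sw)\,\mathcal{S}_{m+r+1-k},\qquad m\in\BZ.
\]
The $2r+2$ consecutive values of $\mathcal{S}$ we possess (namely $\mathcal{S}_{n+r},\ldots,\mathcal{S}_{n+3r+1}$) yield $r+1$ instances of this recurrence, a square (non-commutative) linear system in the $r+1$ unknowns $e_1(\sw),\ldots,e_{r+1}(\sw)=\Pi$. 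With $\Pi^{\pm 1}$ already available, one triangularizes and solves recursively for $e_r(\sw),\ldots,e_1(\sw)$, carefully tracking the $q$-shift commutation $e_k(\sw)D_i^{-1}=D_i^{-1}(e_k(\sw)|_{\sw_i\to q\sw_i})$ during row-reduction. This places every $e_k(\sw)$ in the localized subalgebra, and then the image of the entire Cartan subalgebra follows from the explicit formula $\wt{\Phi}^0_{-2r-2}(\psi^\pm(z))=\Pi/(W(z)W(q^{-2}z))^\pm$ with $W(z)=\prod_t(1-\sw_t/z)$.

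Finally, with all $e_k(\sw)$ in hand, the recurrence propagates to produce $\mathcal{S}_m$---and hence $\wt{\Phi}^0_{-2r-2}(e_p)$---for every $p\in\BZ$, generating the full image of $U^{\mu,>}_q$. For the negative subalgebra, applying Lemma~\ref{lem:image of important elements}(a) with $\unl{k}=(r)$ (so $|J|=r=a-1$ and its complement $\{1,\ldots,r+1\}\setminus J=\{i\}$ is a singleton) and using $\prod_{j\neq i}D_j^{-1}=\Delta\,D_i$, a direct computation identifies
\[
  \wt{\Phi}^0_{-2r-2}(f_p)\;\propto\;\Pi^p\,\Delta^{-1}\cdot\wt{\Phi}^0_{-2r-2}\bigl(\wt{e}_r\bigl(\prod_s x_s^{-p-2r}\bigr)\bigr),
\]
whose right-hand factor lies in the image of $U^{\mu,>}_q$ (just generated), while $\Pi^p\Delta^{-1}$ are in the subalgebra from the first step; thus every $\wt{\Phi}^0_{-2r-2}(f_p)$ lies in the localized subalgebra, completing the triangular decomposition. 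The principal technical obstacle is the non-commutative Vandermonde inversion in the Cayley--Hamilton step: the coefficients $e_k(\sw)$ commute with every $\sw_j$ but not with the $D_i^{-1}$-factors inside $\mathcal{S}_m$, so validating the row-reduction and isolating each individual $e_k(\sw)$ requires careful bookkeeping of the $q$-shift commutation.
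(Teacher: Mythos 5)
The initial computation is sound: specializing Lemma~\ref{lem:image of important elements}(a) with $k=a=r+1$, $\sZ\equiv 1$ collapses the sum to the single subset $J=\{1,\ldots,r+1\}$, giving $\wt{\Phi}^0_{-2r-2}(\Upsilon^{-1}(E_{r+1,p}))$ as a scalar times $\Pi^{2r+1+p}\Delta$, so their ratio yields $\Pi^{\pm 1}$ (and hence $\Delta^{\pm 1}$). Your closing formula expressing $\wt{\Phi}^0_{-2r-2}(f_p)$ via $\Pi^p\Delta^{-1}$ times an image of $\wt{e}_r(\cdots)$ is also correct and, after unwinding, agrees with the ratio $\hat{\Phi}^0_{-2r-2}(E_{r+1,-2r-1-p})^{-1}\hat{\Phi}^0_{-2r-2}(E_{r,-2r-p})$ used in the paper.

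However, the central step is a genuine gap. The Cayley--Hamilton recurrence
$\mathcal{S}_{m+r+1}=\sum_{k=1}^{r+1}(-1)^{k+1}e_k(\sw)\mathcal{S}_{m+r+1-k}$
is correct, but the claim that you can ``triangularize and solve recursively for $e_r(\sw),\ldots,e_1(\sw)$'' is unjustified. The $\mathcal{S}_m$ are honest difference operators (sums of $c_{m,i}(\sw)D_i^{-1}$), not units in $\wt{\CA}^q_\fra$, so you cannot divide equations by them. Moreover, the $\mathcal{S}_m$ do not commute with each other: if you try to eliminate $e_{r+1}(\sw)$ from two equations by right-multiplying the first by $\mathcal{S}_{m+1}$ and the second by $\mathcal{S}_m$, the resulting coefficients $\mathcal{S}_m\mathcal{S}_{m+1}$ and $\mathcal{S}_{m+1}\mathcal{S}_m$ differ and do not cancel. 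The ``$q$-shift commutation'' you invoke governs how $e_k(\sw)$ passes through a single $D_i^{-1}$; it does not give you any left- or right-inverse for the $\mathcal{S}_m$'s and does not rescue a non-commutative row reduction. Even in the simplest non-trivial case $r=1$, from $e_1(\sw)\mathcal{S}_{n+2}\in\mathcal{C}_n$ and $e_1(\sw)\mathcal{S}_{n+3}\in\mathcal{C}_n$ there is no visible way to extract $e_1(\sw)\in\mathcal{C}_n$ with only $\Pi^{\pm 1}$ adjoined. Since the Cartan identification, the propagation to all $\mathcal{S}_m$, and the negative part all hinge on having the $e_k(\sw)$'s, the rest of the argument is contingent on this unproved step.

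The paper takes a route that sidesteps the inversion entirely. It introduces the specific sums of iterated $q$-commutators
$X^{(k),\pm}_{r+1,n}=\sum_{s_0+\ldots+s_r=k}[e_{n\pm s_0},[\cdots[e_{n+2r-2\pm s_{r-1}},e_{n+2r\pm s_r}]_{q^{-4}}\cdots]_{q^{-2r-2}}]$,
which manifestly lie in $\mathcal{C}_n$ (all subscripts fall in $[n,n+2r+1]$), and shows via the shuffle realization that $\Upsilon(X^{(k),\pm}_{r+1,n})$ is a scalar times $E_{r+1,n}\cdot e_k(x_1^{\pm 1},\ldots,x_{r+1}^{\pm 1})$. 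Applying Lemma~\ref{lem:image of important elements}(a) to this product (again with $J$ forced to be the full set) and left-multiplying by $\wt{\Phi}^0_{-2r-2}(\Upsilon^{-1}(E_{r+1,n}))^{-1}$ directly produces $e_k(\sw_1^{\pm 1},\ldots,\sw_{r+1}^{\pm 1})$ up to an explicit power of $q$. This gives the elementary symmetric polynomials and hence the Cartan images cleanly, with no linear-algebra step in a non-commutative ring. You should replace your Cayley--Hamilton system by these explicit $X^{(k),\pm}$ elements (or something structurally equivalent that produces each $e_k(\sw^{\pm1})$ as a single ratio of two elements already in $\mathcal{C}_n$).

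Finally, a small remark on the easy direction: the reverse inclusion $\mathcal{C}_n\subseteq\wt{\Phi}^0_{-2r-2}(U^{(-2r-2)}_q)$ is not quite ``elementary because the localization elements are images of $U^{\mu,>}_q$''; you also need their \emph{inverses} to lie in the image, which requires the $0$- and $<$-parts of $U^{(-2r-2)}_q$ (e.g.\ $\Delta^{-1}\Pi^{-\cdot}$ is an image of a suitable element in the negative subalgebra, cf.\ Lemma~\ref{lem:image of important elements}(b)).
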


\begin{proof}
Let $\mathcal{C}_n$ denote the $\BC(q)$-subalgebra of $\wt{\CA}^q_\fra$ generated by the above $2r+4$ elements.
Since the $\wt{\Phi}^0_{-2r-2}$-images of $\psi^\pm_s$ are symmetric Laurent polynomials in $\{\sw_k\}_{k=1}^{r+1}$,
to prove the inclusions $\wt{\Phi}^0_{-2r-2}(\psi^\pm_s)\in \mathcal{C}_n$, it suffices to show that the elementary
symmetric polynomials $\{e_k(\sw_1,\ldots,\sw_{r+1})\}_{k=1}^{r+1}$ as well as
$\{e_k(\sw_1^{-1},\ldots,\sw_{r+1}^{-1})\}_{k=1}^{r+1}$ belong to $\mathcal{C}_n$. To this end, we define
\begin{equation}\label{eq:X-elements}
  X^{(k),\pm}_{r+1,n}:=\sum_{s_0,\ldots,s_{r}\in \{0,1\}}^{s_0+\ldots+s_r=k}
  [e_{n\pm s_0},[e_{n+2\pm s_1},\cdots,[e_{n+2r-2\pm s_{r-1}},e_{n+2r \pm s_r}]_{q^{-4}}\cdots]_{q^{-2r}}]_{q^{-2r-2}} \,.
\end{equation}
We note that $X^{(k),+}_{r+1,n}, X^{(k),-}_{r+1,n+1}$ are generated by $\{e_p\}_{p=n}^{n+2r+1}$. It is also clear that
\begin{multline*}
  \Upsilon(X^{(k),\pm}_{r+1,n})=
  \Upsilon([e_{n},[e_{n+2},\cdots,[e_{n+2r-2},e_{n+2r }]_{q^{-4}}\cdots]_{q^{-2r}}]_{q^{-2r-2}})\cdot
  e_k(x_1^{\pm 1},\ldots,x_{r+1}^{\pm 1}) = \\
  (-1)^{\frac{r(r+1)}{2}} (1-q^{-2})^r \cdot E_{r+1,n}(x_1,\ldots,x_{r+1}) \cdot e_k(x_1^{\pm 1},\ldots,x_{r+1}^{\pm 1}) \,,
\end{multline*}
with the latter equality due to Lemma~\ref{lem:E-commutator}.
Applying Lemma~\ref{lem:image of important elements}(a), we find:
\begin{equation*}
  e_k(\sw_1,\ldots,\sw_{r+1})=(-1)^{\frac{r(r+1)}{2}} (1-q^{-2})^r q^{-2k} \,\cdot
  \wt{\Phi}^0_{-2r-2}(\Upsilon^{-1}(E_{r+1,n}))^{-1} \cdot \wt{\Phi}^0_{-2r-2}(X^{(k),+}_{r+1,n})
\end{equation*}
and similarly:
\begin{multline*}
  e_k(\sw_1^{-1},\ldots,\sw_{r+1}^{-1})=(-1)^{\frac{r(r+1)}{2}} (1-q^{-2})^r q^{2k} \,\times \\
  \wt{\Phi}^0_{-2r-2}(\Upsilon^{-1}(E_{r+1,n+1}))^{-1} \cdot \wt{\Phi}^0_{-2r-2}(X^{(k),-}_{r+1,n+1}) \,.
\end{multline*}
This proves $e_k(\sw_1^{\pm 1},\ldots,\sw_{r+1}^{\pm 1})\in \mathcal{C}_n$ for $k\leq r+1$, hence,
$\wt{\Phi}^0_{-2r-2}(\psi^\pm_s)\in \mathcal{C}_n$ for all~possible~$s$.

The inclusions $\wt{\Phi}^0_{-2r-2}(e_p)\in \mathcal{C}_n$, for all $p\in \BZ$, follow now by induction from the equalities:
\begin{equation*}
  \wt{\Phi}^0_{-2r-2}(e_{p\pm 1})=
  (1-q^{\mp 2})^{-1} \cdot \left[e_1(\sw_1^{\pm 1},\ldots,\sw_{r+1}^{\pm 1}),\wt{\Phi}^0_{-2r-2}(e_{p})\right] \,.
\end{equation*}
Finally, the inclusions $\wt{\Phi}^0_{-2r-2}(f_p)\in \mathcal{C}_n$, for all $p\in \BZ$, follow from the equality:
\begin{equation*}
  \wt{\Phi}^0_{-2r-2}(f_p)=(-1)^{r+1}q^{-2r-1}(q-q^{-1})^{-2} \,\cdot
  \hat{\Phi}^0_{-2r-2}(E_{r+1,-2r-1-p})^{-1}\cdot \hat{\Phi}^0_{-2r-2}(E_{r,-2r-p}) \,,
\end{equation*}
whose right-hand side belongs to $\mathcal{C}_n$, due to Theorem~\ref{thm:shuffle homomorphism}(a)
and Lemma~\ref{lem:E-commutator}.
\end{proof}


\section{Relation to $(t,q)$-deformed $Q$-systems of type $A$}\label{sec tq-system}

In this section, we discuss the $(t,q)$-deformation of the construction and results of Section~\ref{ssec dfk1-difference operators}.
In particular, we use the results of Section~\ref{sec Jordan} to establish~\cite[Conjecture 1.17]{dfk2}.


\subsection{Comparison of the difference operators II}\label{ssec dfk2-difference operators}
\

We start by recalling the setup of~\cite[\S3]{dfk2}. To this end, choose two generic complex parameters $\fq$ and $\ft=\theta^2$,
as well as $\sN\geq 1$. Define the $\BC$-algebra $\CB^{\fq}$ as in Section~\ref{ssec dfk1-difference operators} with
$r+1=\sN$ (the subscript ``$\fra$'' is omitted as it is now a $\BC$-algebra). Following~\cite[(3.6, 3.10)]{dfk2}, consider the following series in $z$ with coefficients in $\CB^{\fq}$:
\begin{equation}\label{eq:efh2}
\begin{split}
  & \fe_1(z)^{\DFK}=\frac{\fq^{1/2}}{1-\fq}
    \sum_{i=1}^{\sN} \delta\left(\fq^{1/2}x_i z\right) \prod_{1\leq j\leq \sN}^{j\ne i} \frac{\theta x_i - \theta^{-1} x_j}{x_i-x_j} \Gamma_i \,, \\
  & \ff_1(z)^{\DFK}=\frac{\fq^{-1/2}}{1-\fq^{-1}}
    \sum_{i=1}^{\sN} \delta\left(\fq^{-1/2}x_i z\right) \prod_{1\leq j\leq \sN}^{j\ne i} \frac{\theta^{-1} x_i - \theta x_j}{x_i-x_j} \Gamma_i^{-1} \,, \\
  & \mathfrak{\psi}^\pm(z)^{\DFK}=
    \left(\prod_{i=1}^{\sN} \frac{(1-\fq^{-1/2}\ft x_i z)(1-\fq^{1/2}\ft^{-1} x_i z)}{(1-\fq^{-1/2} x_i z)(1-\fq^{1/2} x_i z)}\right)^\mp \,.
\end{split}
\end{equation}
Let us now match these currents to those arising for the quantum toroidal algebra of $\gl_1$ in Section~\ref{sec Jordan}.
To this end, let us first relate our former parameters to the above ones via:
\begin{equation}\label{eq:our-vs-dfk parameters}
  q_1=\fq \,,\ q_2=1/\ft \,,\ q_3=1/q_1q_2=\ft/\fq \qquad \mathrm{as\ well\ as} \qquad N=0 \,,\ a=\sN \,.
\end{equation}
We identify
  $\iota\colon \wt{\CA}^{q_1} \,\iso\, \CB^{\fq}$ via
  $\sw^{\pm 1}_i \mapsto x_i^{\mp 1}\fq^{\mp 1/2}, D_i^{\pm 1} \mapsto \Gamma_i^{\mp 1}$,
cf.~\eqref{eq:identify A-to-B}. Define the composition:
\begin{equation}\label{eq:comparison dfk2}
  \bar{\Phi}_{\sN}\colon \ddot{U}_{q_1,q_2,q_3}(\gl_1) \, \overset{\wt{\Phi}_{\sN}}{\longrightarrow}\,
  \wt{\CA}^{q_1} \, \overset{\iota}{\iso} \, \CB^{\fq} \,.
\end{equation}
The following is straightforward:

\begin{Lem}
The currents~\eqref{eq:efh2} can be expressed as (recall that $\theta=\ft^{1/2}$):
\begin{equation*}
\begin{split}
  & \fe_1(z)^{\DFK}=\fq^{-\frac{1}{2}} \ft^{\frac{1-\sN}{2}} \bar{\Phi}_{\sN}(e(z)) \,, \\ 
  & \ff_1(z)^{\DFK}=-\fq^{\frac{1}{2}} \ft^{\frac{\sN-1}{2}} \bar{\Phi}_{\sN}(f(z)) \,, \\
  & \mathfrak{\psi}^+(z)^{\DFK}=\bar{\Phi}_{\sN}(\psi^-(z)) \,, \\ 
  & \mathfrak{\psi}^-(z)^{\DFK}=\bar{\Phi}_{\sN}(\psi^+(z)) \,.
\end{split}
\end{equation*}
\end{Lem}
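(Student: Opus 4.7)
The plan is a direct substitution computation: apply the explicit formulas~\eqref{eq:GKLO-assignment toroidal-1} of Proposition~\ref{prop:homomorphism Jordan} to $e(z), f(z), \psi^\pm(z)$ under the parameter dictionary~\eqref{eq:our-vs-dfk parameters}, namely $q_1=\fq,\, q_2=\ft^{-1},\, q_3=\ft/\fq,\, N=0,\, a=\sN$ (so in particular $\sZ(z)\equiv 1$), then push the result through the identification $\iota\colon \sw_r^{\pm 1}\mapsto x_r^{\mp 1}\fq^{\mp 1/2},\ D_r^{\pm 1}\mapsto \Gamma_r^{\mp 1}$ to land in $\CB^\fq$, and finally compare with~\eqref{eq:efh2}.

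For the $\fe$-current, after substitution one finds
\begin{equation*}
  \wt{\Phi}^0_{\sN}(e(z))=\frac{-1}{1-\fq^{-1}}\sum_{r=1}^\sN \delta(\sw_r/z)
  \prod_{s\ne r}\frac{\sw_r-\ft\,\sw_s}{\sw_r-\sw_s}\,D_r^{-1}\,,
\end{equation*}
and using $\delta(w)=\delta(w^{-1})$ gives $\iota(\delta(\sw_r/z))=\delta(\fq^{1/2}x_r z)$; the product ratio becomes $\prod_{s\ne r}\frac{x_s-\ft x_r}{x_s-x_r}$ after multiplying top and bottom by $x_rx_s\fq^{1/2}$. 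Rewriting the DFK product via $\theta x_i-\theta^{-1}x_j=\theta^{-1}(\ft x_i-x_j)$ extracts a factor $\ft^{(1-\sN)/2}$, and the remaining scalar match collapses to $\fq^{1/2}(\fq^{-1}-1)/(1-\fq)=\fq^{-1/2}$, producing the claimed prefactor. The argument for $\ff_1(z)^{\DFK}$ is symmetric: the analogous simplification of $\prod_{s\ne r}(\sw_r-\ft^{-1}\sw_s)/(\sw_r-\sw_s)$ under $\iota$ yields $\ft^{-(\sN-1)}\prod(x_r-\ft x_s)/(x_r-x_s)$, and comparison to the DFK product (which is $\ft^{-(\sN-1)/2}\prod(x_i-\ft x_j)/(x_i-x_j)$) together with $(1-\fq)/(1-\fq^{-1})=-\fq$ produces the $-\fq^{1/2}\ft^{(\sN-1)/2}$ prefactor.

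For the Cartan currents, the substituted rational function is $\prod_r\frac{(z-\ft\sw_r)(z-\fq\ft^{-1}\sw_r)}{(z-\sw_r)(z-\fq\sw_r)}$; under $\iota$ and after pulling out the leading coefficients (which balance to $1$ since $\ft\fq^{-1/2}\cdot\fq^{1/2}\ft^{-1}=\fq^{-1/2}\cdot\fq^{1/2}=1$), one obtains
\begin{equation*}
  \prod_r \frac{(1-\fq^{1/2}\ft^{-1}x_r z)(1-\fq^{-1/2}\ft x_r z)}{(1-\fq^{1/2}x_r z)(1-\fq^{-1/2}x_r z)}\,,
\end{equation*}
which is precisely the rational function in parentheses of~\eqref{eq:efh2}. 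The swap $\psi^+\leftrightarrow\psi^-$ in the claim is then forced purely by the expansion conventions: our $\psi^+(z)$ is the $z^{-1}$-expansion (the $^+$ expansion), whereas the DFK $\mathfrak{\psi}^+(z)^{\DFK}$ is defined as the $^-$ expansion of the same rational function, hence corresponds to our $\psi^-(z)$.

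There is no genuine obstacle here — the entire statement is a bookkeeping exercise, and the one thing worth doing carefully is tracking the sign and power-of-parameters discrepancies that arise from (i)~the rescaling $\sw_r=x_r^{-1}\fq^{-1/2}$ introducing $\fq^{\pm 1/2}$ factors in the delta-arguments, (ii)~the different normalizations $\theta$ versus $\ft=\theta^2$ in the DFK products, and (iii)~the flipped $\pm$ convention on the Cartan currents. Once these three sources of factors are accounted for, each of the four identities reduces to an elementary algebraic simplification.
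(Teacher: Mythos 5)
Your proof is correct and is exactly the direct substitution-and-bookkeeping verification the paper has in mind (the paper omits the proof entirely, introducing the lemma with ``The following is straightforward''). All four prefactor computations and the expansion-convention explanation for the $\psi^+\leftrightarrow\psi^-$ swap check out.
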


In particular, this immediately shows that the currents~\eqref{eq:efh2} indeed satisfy the defining relations~(\ref{t1}--\ref{t8})
with the parameters $q_1,q_2,q_3$ as in~\eqref{eq:our-vs-dfk parameters}, thus implying~\cite[Theorem~3.5]{dfk2}.


\subsection{Generalized Macdonald operators}\label{ssec generalized macdonald}
\

Following~\cite[Definition 1.13]{dfk2}, for any $1\leq \alpha \leq \sN$ and any symmetric Laurent
polynomial $P\in \BC[x_1^{\pm 1},\ldots,x_\alpha^{\pm 1}]^{S(\alpha)}$, define the
\emph{generalized Macdonald operator} $\CA_\alpha(P)\in \CB^{\fq}$~via:
\begin{equation}\label{eq:generalized Macdonald}
  \CA_\alpha(P):=\frac{1}{\alpha!\cdot (\sN-\alpha)!}\cdot \underset{x_1,\ldots,x_{\sN}}{\Sym}
  \left(P(x_1,\ldots,x_\alpha)\prod_{1\leq i\leq \alpha<j\leq \sN} \frac{\theta x_i -\theta^{-1} x_j}{x_i-x_j}\cdot
  \Gamma_1\cdots \Gamma_\alpha\right) \,.
\end{equation}
In particular, $\iota^{-1}(\CA_\alpha(1))\in \wt{\CA}^{q_1}$ is a multiple of the Macdonald operator
$\mathcal{D}^{\alpha}_{\sN}(q_1,q_2)$ from~\eqref{eq:Macdonald operator}.

\begin{Rem}\label{rem:polynomial-vs-rational 1}
We note that the definition~\eqref{eq:generalized Macdonald} is made in~\cite{dfk2} for any symmetric rational function
$P\in \BC(x_1,\ldots,x_\alpha)^{S(\alpha)}$. However, some of the key results below seem to fail in this generality,
see Remarks~\ref{rem:polynomial-vs-rational 2},~\ref{rem:polynomial-vs-rational 4}.
\end{Rem}

Following~\cite[Definition 1.15]{dfk2}, we also define the difference operator $\CB_\alpha(P)\in \CB^{\fq}$~via:
\begin{equation}\label{eq:shuffle Macdonald}
  \CB_\alpha(P):=\frac{1}{\alpha!} \CT_{u_1,\ldots,u_\alpha}
  \left(P(u_1^{-1},\ldots,u_\alpha^{-1})
  \prod_{1\leq i<j\leq \alpha} \frac{(u_i-u_j)(u_i-\fq u_j)}{(u_i-\ft u_j)(u_i-\fq \ft^{-1} u_j)}
  \fd(u_1)\cdots \fd(u_\alpha)\right) \,,
\end{equation}
where the constant term $\CT_{u_1,\ldots,u_\alpha}$ is defined as in Proposition~\ref{prop:m-via-CT},
and $\fd(z)$ is defined via:
\begin{equation}\label{eq:fd-current}
  \fd(z) = \sum_{n\in \BZ} \mathcal{D}_{1;n} z^n := (\fq^{-1/2}-\fq^{1/2}) \fe_1(\fq^{-1/2}z)^{\DFK} \,.
\end{equation}
The above two constructions~\eqref{eq:generalized Macdonald} and~\eqref{eq:shuffle Macdonald} are related via~\cite[Theorem 1.16]{dfk2}:

\begin{Prop}\cite{dfk2}
For any $1\leq \alpha\leq \sN$ and $P\in \BC[x_1^{\pm 1},\ldots,x_\alpha^{\pm 1}]^{S(\alpha)}$, we have:
\begin{equation}\label{eq:A=B}
  \CA_\alpha(P)=\CB_\alpha(P) \,.
\end{equation}
\end{Prop}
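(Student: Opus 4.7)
The plan is to realize both $\CA_\alpha(P)$ and $\CB_\alpha(P)$ as the same expression $\theta^{-\alpha(\sN-\alpha)}\iota\bigl(\widehat{\Phi}^{\unl{\sz}}_{a}(\wt{E}_\alpha(\tilde{P}))\bigr)$, where the setup is as in~\eqref{eq:comparison dfk2} (with $\sZ\equiv 1$, since $N=0$), and $\tilde{P}(y_1,\ldots,y_\alpha):=P(\fq^{-1/2}y_1^{-1},\ldots,\fq^{-1/2}y_\alpha^{-1})$. The identity $\CA_\alpha(P)=\CB_\alpha(P)$ then follows by transitivity.

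For $\CA_\alpha(P)$, I would first rewrite the symmetrization over $S(\sN)$ in~\eqref{eq:generalized Macdonald} as a sum over $\alpha$-element subsets $J\subset\{1,\ldots,\sN\}$, exploiting the $S(\alpha)\times S(\sN-\alpha)$-invariance of the summand. Comparing with the $\iota$-image of~\eqref{eq:image of important E toroidal-1} then yields the claim, once one notes $\frac{\sw_i-\ft\sw_j}{\sw_i-\sw_j}\mapsto\theta\cdot\frac{\theta x_i-\theta^{-1}x_j}{x_i-x_j}$ under $\iota:\sw_i\mapsto x_i^{-1}\fq^{-1/2}$ (with $q_2^{-1}=\ft$), so that a factor $\theta^{\alpha(\sN-\alpha)}$ is generated that absorbs the conversion $P\leftrightarrow\tilde{P}$.

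For $\CB_\alpha(P)$, I would start from the identification $\fd(u)=K\bar{\Phi}^0_a(e(\fq^{-1/2}u))$ with $K=(\fq^{-1}-1)\ft^{(1-\sN)/2}$ (immediate from~\eqref{eq:efh2},~\eqref{eq:fd-current} and~\eqref{eq:GKLO-assignment toroidal-1}), and set $v_k=\fq^{-1/2}u_k$, so that $\fd(u_1)\cdots\fd(u_\alpha)=K^\alpha\bar{\Phi}^0_a(e(v_1)\cdots e(v_\alpha))$. A short induction on $\alpha$ from~\eqref{shuffle product} yields the shuffle realization
\begin{equation*}
  \Upsilon\bigl(e(v_1)\cdots e(v_\alpha)\bigr)\,=\,\underset{x_1,\ldots,x_\alpha}{\Sym}\Bigl(\prod_{k=1}^{\alpha}\delta(x_k/v_k)\prod_{1\leq i<j\leq\alpha}\zeta(x_i/x_j)\Bigr),
\end{equation*}
with the symmetrization being a plain sum over $S(\alpha)$. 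Since the change $u_k=\fq^{1/2}v_k$ leaves both $\CT$ and the Jacobian $J(u)$ from~\eqref{eq:shuffle Macdonald} invariant (all factors of $J$ scale uniformly by $\fq^{1/2}$), and since $\CT_v$ commutes with the algebra homomorphism $\bar{\Phi}^0_a\circ\Upsilon^{-1}$, the evaluation of $\CB_\alpha(P)$ reduces to computing $\sum_{\sigma\in S(\alpha)}\tilde{P}(x_\sigma)J(x_\sigma)\prod_{i<j}\zeta(x_{\sigma(i)}/x_{\sigma(j)})$ inside the shuffle algebra, obtained by passing $\CT_v$ through $\Sym$ and evaluating the $\delta$-functions term-by-term.

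The principal observation, and the only step that requires genuine verification, is the $S(\alpha)$-invariance of $J(x)\prod_{i<j}\zeta(x_i/x_j)$. This follows from the elementary simplification
\begin{equation*}
  J(x)\prod_{1\leq i<j\leq\alpha}\zeta(x_i/x_j)\,=\,\fq^{-\binom{\alpha}{2}}\prod_{1\leq i\neq j\leq\alpha}\frac{x_i-\fq x_j}{x_i-x_j},
\end{equation*}
whose right-hand side is visibly symmetric. Combined with the symmetry of $\tilde{P}$, all $\alpha!$ summands coincide, so the total sum equals $\alpha!\,\tilde{P}(x)\fq^{-\binom{\alpha}{2}}\prod_{i\neq j}\tfrac{x_i-\fq x_j}{x_i-x_j}$. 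After rewriting $\prod_{i\neq j}(x_i-\fq x_j)=\fq^{\alpha(\alpha-1)}\prod_{i\neq j}(x_i-\fq^{-1}x_j)$ via a sign chase on ordered pairs, this is recognized as an explicit scalar multiple of $\wt{E}_\alpha(\tilde{P})$ from~\eqref{eq:important E-elements toroidal-1}. The final bookkeeping — multiplying $K^\alpha$, the prefactor $1/\alpha!$ of~\eqref{eq:shuffle Macdonald}, the $\alpha!$ from the symmetrization collapse, and the inverse of the coefficient of $\wt{E}_\alpha$ — telescopes to $\theta^{-\alpha(\sN-\alpha)}$, matching the first side and completing the proof.
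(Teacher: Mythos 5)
The paper itself does not prove this Proposition: it is quoted from~\cite{dfk2} with a citation, and the surrounding text only extracts consequences of it (Proposition~\ref{prop:dfk-vs-GKLO} is derived independently of~\eqref{eq:A=B}, directly from Lemma~\ref{lem:image of important elements toroidal-1}). Your argument is therefore a genuinely new, self-contained proof within the paper's own machinery, and I believe it is correct. The $\CA$-side identification $\CA_\alpha(P)=\theta^{-\alpha(\sN-\alpha)}\iota\bigl(\hat{\Phi}^0_{\sN}(\wt{E}_\alpha(\tilde{P}))\bigr)$ is exactly Proposition~\ref{prop:dfk-vs-GKLO} (your conversion $\frac{\sw_r-\ft\sw_s}{\sw_r-\sw_s}\mapsto\theta\cdot\frac{\theta x_r-\theta^{-1}x_s}{x_r-x_s}$ checks out, though the $\theta^{\alpha(\sN-\alpha)}$ comes from these cross-factors rather than from the $P\leftrightarrow\tilde{P}$ substitution, which is exact). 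The $\CB$-side is a delta-function/constant-term computation in the style of the paper's proof of Proposition~\ref{prop:m-via-CT}; I verified your key identity
\begin{equation*}
  \prod_{1\leq i<j\leq\alpha}\frac{(x_i-x_j)(x_i-\fq x_j)}{(x_i-\ft x_j)(x_i-\fq\ft^{-1}x_j)}\cdot\prod_{1\leq i<j\leq\alpha}\zeta\!\left(\frac{x_i}{x_j}\right)
  \,=\,\fq^{-\binom{\alpha}{2}}\prod_{1\leq i\neq j\leq\alpha}\frac{x_i-\fq x_j}{x_i-x_j}\,,
\end{equation*}
as well as the final constant: $K^\alpha\,\ft^{\binom{\alpha}{2}}(\fq^{-1}-1)^{-\alpha}=\ft^{\alpha(\alpha-\sN)/2}=\theta^{-\alpha(\sN-\alpha)}$, so the two sides do match. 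One point worth making explicit: the rational prefactor in~\eqref{eq:shuffle Macdonald} must be expanded in some region before $\CT_{u_1,\ldots,u_\alpha}$ is applied, but its poles at $u_i=\ft u_j$ and $u_i=\fq\ft^{-1}u_j$ are cancelled exactly by the numerator of the $\zeta$-factors produced by $\fd(u_1)\cdots\fd(u_\alpha)$ (and $(x-a)\cdot\bigl[\tfrac{1}{x-a}\bigr]_{\mathrm{expanded}}=1$ for either expansion), so the answer is region-independent for Laurent-polynomial $P$ -- this is precisely the point that breaks down for general rational $P$, in line with Remark~\ref{rem:polynomial-vs-rational 2}. What your route buys, compared with citing~\cite{dfk2}, is that~\eqref{eq:A=B} becomes a corollary of the shuffle realization of the GKLO homomorphism rather than an external input.
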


\begin{Rem}\label{rem:polynomial-vs-rational 2}
We note that this result is stated in~\cite{dfk2} for any $P\in \BC(x_1,\ldots,x_\alpha)^{S(\alpha)}$.
However, this does not look true in that generality as $\CB_\alpha(P)$ will involve terms with some powers
$\Gamma_i^{>1}$, unlike $\CA_\alpha(P)$. For one thing, the constant term $\CT_{u_1,\ldots,u_\alpha}(\cdots)$
should be treated carefully for rational functions by specifying the region in which they are expanded as series.
\end{Rem}


\subsection{Comparing the shuffle algebras}\label{ssec comparing shuffle}
\

In order to relate the above construction to our Section~\ref{sec Jordan}, we shall first clarify the shuffle
algebra considered in~\cite[\S7]{dfk2} and its relation to the one from Section~\ref{ssec shuffle toroidal-1}.
To this end, consider an $\BN$-graded $\BC$-vector space $\BS^{\DFK}=\underset{k\in\BN}{\bigoplus} \BS^{\DFK}_{k}$,
with the graded components
\begin{equation}\label{eq:DFK pole conditions}
  \BS^{\DFK}_{k}=
  \left\{F=\frac{f(x_1,\ldots,x_k)}{\prod_{1\leq r\ne s \leq k} (x_{r}-\fq^{-1}x_{s})} \,\Big|\,
  f\in \BC\left[x_{1}^{\pm 1},\ldots,x_k^{\pm 1}\right]^{S(k)}\right\} \,.
\end{equation}
We also choose a rational function of~\cite[\S7.1]{dfk2}:
\begin{equation}\label{eq:DFK shuffle factor}
  \zeta^{\DFK}\left(x\right)=\frac{(1-\ft x)(1-\fq \ft^{-1}x)}{(1-x)(1-\fq x)} \,.
\end{equation}
The bilinear \emph{shuffle product} $\star$ on $\BS^{\DFK}$ is defined completely analogously to~\eqref{shuffle product},
thus making $\BS^{\DFK}$ into an associative unital $\BC$-algebra. As before, consider an $\BN$-graded subspace of $\BS^{\DFK}$
defined by the same \emph{wheel conditions} (but now on the numerators appearing in~\eqref{eq:DFK pole conditions}):
\begin{equation}\label{eq:DFK wheel conditions}
  f(x_1,\ldots,x_k)=0 \quad \mathrm{once} \quad
  \left\{ \frac{x_1}{x_2},\frac{x_2}{x_3}, \frac{x_3}{x_1} \right\}=\left\{\fq,\frac{1}{\ft},\frac{\ft}{\fq}\right\} \,.
\end{equation}
Let $S^{\DFK}\subset \BS^{\DFK}$ denote the subspace of all such elements $F$, which is easily seen
to be $\star$-closed. This construction is related to that of Section~\ref{ssec shuffle toroidal-1} via:

\begin{Lem}\label{lem:two shuffles}
For $q_1=\fq, q_2=1/\ft, q_3=\ft/\fq$ as in~\eqref{eq:our-vs-dfk parameters}, the assignment
\begin{equation}\label{eq:shuffle identification}
  P(x_1,\ldots,x_k)\mapsto \fq^{-\frac{k(k-1)}{2}}\cdot
  \prod_{1\leq r\ne s\leq k} \frac{x_r-x_s}{x_r-\fq^{-1}x_s} \cdot P(x_1^{-1},\ldots,x_k^{-1})
\end{equation}
gives rise to the algebra isomorphism
\begin{equation}\label{eq:whole shuffle algebras compared}
  \eta\colon \BS \,\iso\, \BS^{\DFK} \,,
\end{equation}
which further restricts to the shuffle algebra isomorphism
\begin{equation}\label{eq:shuffle algebras compared}
  \eta\colon S \,\iso\, S^{\DFK} \,.
\end{equation}
\end{Lem}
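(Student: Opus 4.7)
The plan is to proceed in three steps. First, I would verify that $\eta$ defines a bijection of graded pieces $\BS_k \iso \BS^{\DFK}_k$ in every degree. Writing $P = f/\prod_{r\ne s}(x_r-x_s)$ with a symmetric Laurent polynomial $f$, the elementary identity $\prod_{r\ne s}(x_r^{-1}-x_s^{-1}) = \prod_{r\ne s}(x_r-x_s)\cdot \prod_r x_r^{-2(k-1)}$ allows me to rewrite
$$\eta(P) = \fq^{-k(k-1)/2}\cdot \frac{f(x_1^{-1},\ldots,x_k^{-1})\prod_{r=1}^k x_r^{2(k-1)}}{\prod_{1\leq r\ne s\leq k}(x_r-\fq^{-1}x_s)}\,,$$
which manifestly lies in $\BS^{\DFK}_k$. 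An explicit two-sided inverse is obtained by applying the analogous recipe (variable inversion combined with multiplication by $\fq^{k(k-1)/2}\prod_{r\ne s}(x_r-\fq^{-1}x_s)/(x_r-x_s)$), confirming $\eta$ is a linear isomorphism.

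Next, I would establish the algebra homomorphism property $\eta(P\star Q) = \eta(P)\star^{\DFK}\eta(Q)$ for $P\in\BS_k$ and $Q\in\BS_\ell$. The key observation is that the global prefactor $\fq^{-N_m}\prod_{r\ne s;\,r,s\le m}(x_r-x_s)/(x_r-\fq^{-1}x_s)$ appearing in $\eta$ on $\BS_m$ (with $N_m = m(m-1)/2$) is $S_m$-invariant, as is the Jacobian of $x_i\mapsto x_i^{-1}$. Pulling these through the symmetrization in~\eqref{shuffle product} and using $\zeta(x_r^{-1}/x_s^{-1})=\zeta(x_s/x_r)$ together with $N_{k+\ell}-N_k-N_\ell=k\ell$, the equality reduces to a pointwise pair-wise identity
$$\frac{(x_r-x_s)(x_s-x_r)}{(x_r-\fq^{-1}x_s)(x_s-\fq^{-1}x_r)}\cdot\frac{\zeta(x_s/x_r)}{\zeta^{\DFK}(x_r/x_s)} = \fq$$
for each cross-pair $r\le k<s\le k+\ell$. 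Specializing $q_1=\fq$, $q_2=\ft^{-1}$, $q_3=\ft\fq^{-1}$ in~\eqref{eq:shuffle factor toroidal-1} and comparing with~\eqref{eq:DFK shuffle factor} yields
$$\frac{\zeta(x_s/x_r)}{\zeta^{\DFK}(x_r/x_s)} = \frac{(x_s-\fq^{-1}x_r)(x_s-\fq x_r)}{(x_s-x_r)^2}\,,$$
and combining this with $x_r-\fq^{-1}x_s = -\fq^{-1}(x_s-\fq x_r)$ delivers precisely the required factor of $\fq$.

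Finally, for the restriction $\eta\colon S\iso S^{\DFK}$: since the numerator of $\eta(F)\in\BS^{\DFK}$ equals $\fq^{-N_k}f(x^{-1})\prod_r x_r^{2(k-1)}$ and the second factor is nowhere vanishing at a wheel configuration, the DFK wheel condition~\eqref{eq:DFK wheel conditions} for $\eta(F)$ reduces to $f(x^{-1})=0$ whenever $\{x_1/x_2,x_2/x_3,x_3/x_1\}=\{\fq,\ft^{-1},\ft\fq^{-1}\}$, equivalently to $f=0$ at triples with the inverted ratio set $\{\fq^{-1},\ft,\fq\ft^{-1}\}$. Applying the transposition $x_1\leftrightarrow x_2$, available because $f\in\BC[x^{\pm 1}]^{S(k)}$, converts any such triple into one with ratios $\{\fq,\ft^{-1},\ft\fq^{-1}\}$, at which $f$ vanishes by the wheel condition~\eqref{eq:wheel conditions toroidal-1} for $F\in S$. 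Applying the same argument to $\eta^{-1}$ gives the reverse inclusion. The main obstacle throughout is bookkeeping the various powers of $\fq$ and tracking the interplay of variable inversion with the $S_k$-symmetry; once the single pair-wise shuffle-factor identity above is in hand, everything else is a direct verification.
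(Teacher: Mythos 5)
Your proof is correct and is precisely the verification the paper leaves implicit (the paper's own proof of Lemma~\ref{lem:two shuffles} reads simply ``Straightforward''). All three steps check out: the identification of graded pieces uses $\prod_{r\neq s}(x_r^{-1}-x_s^{-1})=\prod_{r\neq s}(x_r-x_s)\cdot\prod_r x_r^{-2(k-1)}$ exactly as you write; the homomorphism property reduces, after pulling the symmetric prefactor through $\Sym$ and using $N_{k+\ell}-N_k-N_\ell=k\ell$, to the per-pair identity $\fq^{-1}\cdot\frac{(x_r-x_s)(x_s-x_r)}{(x_r-\fq^{-1}x_s)(x_s-\fq^{-1}x_r)}\cdot\zeta(x_s/x_r)=\zeta^{\DFK}(x_r/x_s)$, which your computation of $\zeta(x_s/x_r)/\zeta^{\DFK}(x_r/x_s)=\frac{(x_s-\fq^{-1}x_r)(x_s-\fq x_r)}{(x_s-x_r)^2}$ confirms; and the observation that inverting variables reverses the cyclic ratio triple while a single transposition (legitimate by $S(k)$-symmetry of the numerator) restores it handles the wheel condition cleanly. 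One cosmetic point: the phrase ``the Jacobian of $x_i\mapsto x_i^{-1}$'' is a bit misleading — what is actually used is that the inversion commutes with the $\Sym$ operator, not any Jacobian factor — but this does not affect the substance.
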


\begin{proof}
Straightforward.
\end{proof}

Combining this with Proposition~\ref{thm:shuffle iso tootidal-1}, we obtain:

\begin{Cor}
The assignments $e_{r}\mapsto x_{1}^{-r}$ and $f_{r}\mapsto x_{1}^{-r}$ give rise to $\BC$-algebra isomorphisms
\begin{equation}\label{eq:Upsilon-DFK}
  \bar{\Upsilon}\colon \ddot{U}^>_{\fq,1/\ft,\ft/\fq}(\gl_1) \,\iso\, S^{\DFK}
    \qquad \mathrm{and} \qquad
  \bar{\Upsilon}\colon \ddot{U}^<_{\fq,1/\ft,\ft/\fq}(\gl_1) \,\iso\, S^{\DFK,\op} \,.
\end{equation}
\end{Cor}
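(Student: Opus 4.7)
The plan is to obtain $\bar{\Upsilon}$ as the composition of two isomorphisms that are already at our disposal. Specifically, I would specialize Proposition~\ref{thm:shuffle iso tootidal-1} to the parameters $q_1=\fq$, $q_2=1/\ft$, $q_3=\ft/\fq$ of~\eqref{eq:our-vs-dfk parameters} to get the shuffle isomorphism $\Upsilon\colon \ddot{U}^>_{\fq,1/\ft,\ft/\fq}(\gl_1)\,\iso\,S$, and then post-compose with the algebra isomorphism $\eta\colon S\,\iso\, S^{\DFK}$ provided by Lemma~\ref{lem:two shuffles}. Setting $\bar{\Upsilon}:=\eta\circ \Upsilon$ automatically produces a $\BC$-algebra isomorphism between $\ddot{U}^>_{\fq,1/\ft,\ft/\fq}(\gl_1)$ and $S^{\DFK}$, which is precisely the conclusion of the first half of the Corollary.

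For the opposite-algebra half, the same recipe applies: take the second isomorphism $\Upsilon\colon \ddot{U}^<_{\fq,1/\ft,\ft/\fq}(\gl_1)\,\iso\,S^{\op}$ from Proposition~\ref{thm:shuffle iso tootidal-1}, and observe that since $\eta$ is an isomorphism of unital $\BC$-algebras, it yields $\eta\colon S^{\op}\,\iso\, S^{\DFK,\op}$ by functoriality of $(-)^{\op}$. The composition gives the desired map on the negative subalgebra.

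The only verification that remains is to track the assignment on generators, i.e.\ compute $\bar{\Upsilon}(e_r)=\eta(x_1^r)$ and $\bar{\Upsilon}(f_r)=\eta(x_1^r)$ using the explicit formula~\eqref{eq:shuffle identification}. In the degree-one component $\BS^{\DFK}_1$, the prefactor $\fq^{-k(k-1)/2}\prod_{r\ne s}(x_r-x_s)/(x_r-\fq^{-1}x_s)$ in~\eqref{eq:shuffle identification} collapses to the empty product times $\fq^0=1$, so the formula reduces to the substitution $P(x_1)\mapsto P(x_1^{-1})$; this identifies the image of the generators with the advertised monomial (up to the standard involution $x_1\leftrightarrow x_1^{-1}$, which corresponds to the universal relabeling $e_r\leftrightarrow e_{-r}$ matching the choice of $\Upsilon$ on either side of Lemma~\ref{lem:two shuffles}).

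There is no genuine obstacle here: both inputs (Proposition~\ref{thm:shuffle iso tootidal-1} for $S$, and Lemma~\ref{lem:two shuffles} comparing $S$ with $S^{\DFK}$) have already been stated and their bijectivity is known, so the Corollary is a formal consequence of composing two established isomorphisms. The only point of care is the minor bookkeeping involved in matching the degree-one images of the generators to the claimed normalization $e_r\mapsto x_1^r$, $f_r\mapsto x_1^r$.
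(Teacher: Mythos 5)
Your proposal is correct and is exactly the paper's argument: the Corollary is stated there as an immediate consequence of composing the isomorphism $\Upsilon$ of Proposition~\ref{thm:shuffle iso tootidal-1} with $\eta$ of Lemma~\ref{lem:two shuffles}. You even handle the one point the paper glosses over — that $\eta\circ\Upsilon$ sends $e_r\mapsto x_1^{-r}$, so matching the advertised normalization $e_r\mapsto x_1^r$ requires precomposing with the relabeling automorphism $e_r\mapsto e_{-r}$ of $\ddot{U}^>_{\fq,1/\ft,\ft/\fq}(\gl_1)$.
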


\begin{Rem}\label{rem:polynomial-vs-rational 3}
In~\cite{dfk2}, neither pole~\eqref{eq:DFK pole conditions} nor wheel~\eqref{eq:DFK wheel conditions}
conditions were imposed.
\end{Rem}


\subsection{Generalized Macdonald operators via GKLO-type homomorphisms}\label{ssec Macdonald-via-GKLO}
\

Now we are finally ready to relate the aforementioned constructions to those of Section~\ref{sec Jordan}.
To this end, for any $1\leq \alpha\leq \sN$ and $g\in \BC[x_1^{\pm 1},\ldots,x_\alpha^{\pm 1}]^{S(\alpha)}$,
recall $\wt{E}_{\alpha}(g)\in S_{\alpha}$ defined in~\eqref{eq:important E-elements toroidal-1} with the parameters
$q_1=\fq, q_2=1/\ft, q_3=\ft/\fq$ as in~\eqref{eq:our-vs-dfk parameters}. The following is straightforward:

\begin{Lem}
  $\eta(\wt{E}_{\alpha}(g))=\ft^{\frac{\alpha-\alpha^2}{2}}(\fq^{-1}-1)^{\alpha}\cdot
   g(x_1^{-1},\ldots,x_\alpha^{-1})\in S^{\DFK}_{\alpha}$.
\end{Lem}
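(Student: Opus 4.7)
The statement is a direct calculation that reduces to matching prefactors. The first step is to substitute the specializations $q_1=\fq$, $q_2=1/\ft$, $q_3=\ft/\fq$ into the definition~\eqref{eq:important E-elements toroidal-1} of $\wt{E}_\alpha(g)$, so that it reads
\begin{equation*}
  \wt{E}_\alpha(g)(x_1,\ldots,x_\alpha) \,=\, (\ft/\fq)^{\frac{\alpha-\alpha^2}{2}}(\fq^{-1}-1)^\alpha \cdot
  \frac{\prod_{1\leq r\ne s\leq \alpha}(x_r-\fq^{-1}x_s)}{\prod_{1\leq r\ne s\leq \alpha}(x_r-x_s)} \cdot g(x_1,\ldots,x_\alpha) \,.
\end{equation*}

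Next, I would evaluate this expression at $(x_1^{-1},\ldots,x_\alpha^{-1})$. Each individual factor satisfies the elementary identity $x_r^{-1}-c\,x_s^{-1} = (x_s-c\,x_r)/(x_rx_s)$ for any constant $c$, and so the monomial factors $(x_rx_s)^{-1}$ cancel between the numerator and denominator of the rational prefactor. After the relabeling $(r,s)\leftrightarrow(s,r)$ in each product $\prod_{r\ne s}$ (which is invariant under this swap), the key simplification
\begin{equation*}
  \frac{\prod_{1\leq r\ne s\leq \alpha}(x_r^{-1}-\fq^{-1}x_s^{-1})}{\prod_{1\leq r\ne s\leq \alpha}(x_r^{-1}-x_s^{-1})}
  \,=\, \prod_{1\leq r\ne s\leq \alpha}\frac{x_r-\fq^{-1}x_s}{x_r-x_s}
\end{equation*}
falls out.

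Finally, I would apply the definition~\eqref{eq:shuffle identification} of $\eta$, which multiplies the previous outcome by the reciprocal of this rational factor (up to a scalar power of $\fq$). The two rational products therefore cancel exactly, leaving only the scalar prefactors combined with $g(x_1^{-1},\ldots,x_\alpha^{-1})$, and collecting these constants yields the stated formula. Membership in $S^{\DFK}_\alpha$ is automatic from the isomorphism~\eqref{eq:shuffle algebras compared} once one knows $\wt{E}_\alpha(g)\in S_\alpha$, which was already observed in the paragraph preceding~\eqref{eq:important E-elements toroidal-1}. The main, and really only, obstacle is careful bookkeeping of the scalar prefactors; there is no serious combinatorial or analytic difficulty in the argument.
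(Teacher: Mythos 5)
Your strategy is exactly the intended one --- the paper offers no argument beyond calling the lemma straightforward --- and the structural steps you describe are all correct: after the substitution $q_1=\fq$, $q_3=\ft/\fq$, the rational prefactor of $\wt{E}_\alpha(g)$ is invariant under $x_i\mapsto x_i^{-1}$ (via the relabeling $(r,s)\leftrightarrow(s,r)$ over the ordered index pairs, exactly as you say), and it then cancels against the rational factor in the definition~\eqref{eq:shuffle identification} of $\eta$. The membership claim via~\eqref{eq:shuffle algebras compared} is also fine.

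The gap is in the one step you do not actually perform: ``collecting these constants yields the stated formula.'' It does not. The map $\eta$ carries the scalar $\fq^{-\frac{\alpha(\alpha-1)}{2}}=\fq^{\frac{\alpha-\alpha^2}{2}}$, and combining it with the scalar $(\ft/\fq)^{\frac{\alpha-\alpha^2}{2}}(\fq^{-1}-1)^{\alpha}$ coming from~\eqref{eq:important E-elements toroidal-1} gives
\begin{equation*}
  \eta(\wt{E}_{\alpha}(g))=\ft^{\frac{\alpha-\alpha^2}{2}}(\fq^{-1}-1)^{\alpha}\cdot g(x_1^{-1},\ldots,x_\alpha^{-1})\,,
\end{equation*}
whose base is $\ft$ rather than $\ft/\fq$; this differs from the printed statement by $\fq^{\frac{\alpha(\alpha-1)}{2}}$, already visible at $\alpha=2$, $g=1$. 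Since this scalar is the entire nontrivial content of the lemma, asserting it rather than computing it is a real omission: either the printed prefactor is a typo (note that the factor $\fq^{-\frac{k(k-1)}{2}}$ in $\eta$ is forced by the requirement that $\eta$ intertwine the two shuffle products built from $\zeta$ and $\zeta^{\DFK}$, so it cannot simply be dropped), or your bookkeeping has silently absorbed a power of $\fq$. The discrepancy does not propagate --- Proposition~\ref{prop:dfk-vs-GKLO} follows directly from Lemma~\ref{lem:image of important elements toroidal-1}(a) independently of this constant --- but your proof as written does not establish the identity in the form stated, and you should either exhibit the constant collection explicitly or flag the mismatch.
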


Therefore, the span of $\wt{E}_{\alpha}(g)\in \BS$ is matched under~\eqref{eq:whole shuffle algebras compared} with the subspace of all symmetric Laurent polynomials
in $\BS^{\DFK}$, for which the constructions and results of Section~\ref{ssec generalized macdonald} apply. In particular,
comparing our Lemma~\ref{lem:image of important elements toroidal-1} with the definition~\eqref{eq:generalized Macdonald},
we immediately obtain:

\begin{Prop}\label{prop:dfk-vs-GKLO}
For any $1\leq \alpha\leq \sN$ and $g\in \BC[x_1^{\pm 1},\ldots,x_\alpha^{\pm 1}]^{S(\alpha)}$, we have:
\begin{equation}\label{eq:matching two homomorpisms}
  \iota(\hat{\Phi}_{\sN}(\wt{E}_\alpha(g)))=\theta^{\alpha(\sN-\alpha)}\cdot \CA_\alpha(P)
    \quad \mathrm{with} \quad
  P(x_1,\ldots,x_\alpha)=g(\fq^{-1/2}x_1^{-1},\ldots, \fq^{-1/2} x_\alpha^{-1})
\end{equation}
and the identification $\iota\colon \wt{\CA}^{q_1} \,\iso\, \CB^{\fq}$ being defined right after~\eqref{eq:our-vs-dfk parameters}.
\end{Prop}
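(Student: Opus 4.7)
The plan is to unwind both sides of \eqref{eq:matching two homomorpisms} into an explicit sum over size-$\alpha$ subsets $J\subset\{1,\ldots,\sN\}$ and match them term by term. The left-hand side is handled by the closed formula of Lemma~\ref{lem:image of important elements toroidal-1}(a) specialized to the present setting, while the right-hand side is obtained by unfolding the symmetrization in the definition \eqref{eq:generalized Macdonald}.

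First, I would apply Lemma~\ref{lem:image of important elements toroidal-1}(a) to $\wt{E}_\alpha(g)\in S_\alpha$ with $N=0$ (so that $\sZ(z)\equiv 1$), $a=\sN$, and the parameter identification \eqref{eq:our-vs-dfk parameters}, namely $q_1=\fq$, $q_2=\ft^{-1}$, $q_3=\ft/\fq$. This yields
\begin{equation*}
  \hat{\Phi}^0_{\sN}(\wt{E}_\alpha(g)) = \sum_{\substack{J\subset\{1,\ldots,\sN\}\\ |J|=\alpha}}
  \prod_{r\in J}^{s\notin J}\frac{\sw_r-\ft\,\sw_s}{\sw_r-\sw_s}
  \cdot g\bigl(\{\sw_r\}_{r\in J}\bigr)\cdot \prod_{r\in J} D_r^{-1}\,.
\end{equation*}
Applying the isomorphism $\iota\colon \sw_r^{\pm 1}\mapsto x_r^{\mp 1}\fq^{\mp 1/2}$, $D_r^{-1}\mapsto \Gamma_r$, the Cartan substitution inside $g$ produces precisely $g(\{\fq^{-1/2}x_r^{-1}\}_{r\in J})=P(\{x_r\}_{r\in J})$, while the difference $\prod D_r^{-1}\mapsto \prod\Gamma_r$ matches the corresponding factor on the right-hand side.

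Next, I would unfold the right-hand side. Expanding the symmetrization in \eqref{eq:generalized Macdonald} and collapsing the resulting $\alpha!\,(\sN-\alpha)!$ identical contributions inside each subset, we get
\begin{equation*}
  \CA_\alpha(P)=\sum_{\substack{J\subset\{1,\ldots,\sN\}\\|J|=\alpha}}
  P\bigl(\{x_r\}_{r\in J}\bigr)\cdot
  \prod_{r\in J}^{s\notin J}\frac{\theta x_r-\theta^{-1}x_s}{x_r-x_s}\cdot\prod_{r\in J}\Gamma_r\,.
\end{equation*}
It now remains to verify the $\theta$-factor identity
\begin{equation*}
  \frac{x_r^{-1}\fq^{-1/2}-\ft\,x_s^{-1}\fq^{-1/2}}{x_r^{-1}\fq^{-1/2}-x_s^{-1}\fq^{-1/2}}
  \;=\; \theta\cdot\frac{\theta x_r-\theta^{-1}x_s}{x_r-x_s}\,,
\end{equation*}
which follows from clearing denominators and using $\theta^2=\ft$. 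Multiplying this elementary identity over the $\alpha(\sN-\alpha)$ pairs $(r,s)\in J\times J^c$ produces an overall scalar $\theta^{\alpha(\sN-\alpha)}$, thereby matching \eqref{eq:matching two homomorpisms} term by term.

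The only nontrivial bookkeeping, and what I expect to be the main (but purely mechanical) obstacle, is correctly tracking the interplay between the parameter substitutions $\sw_r\leftrightarrow x_r^{-1}\fq^{-1/2}$, the inversion of the argument of $g$ that converts $g(\{\sw_r\})$ into $P(\{x_r\})$, and the rearrangement of the asymmetric ratios $(\sw_r-\ft\sw_s)/(\sw_r-\sw_s)$ into their symmetrically normalized counterparts $(\theta x_r-\theta^{-1}x_s)/(x_r-x_s)$; every such rearrangement contributes one power of $\theta$, and all other prefactors of Lemma~\ref{lem:image of important elements toroidal-1}(a) (the $\sZ_i$-product and the $\sw$-monomial prefactor with exponent $k-1-\sum_{j\to i}k_j$) trivialize in the $\gl_1$-quiver with $N=0$.
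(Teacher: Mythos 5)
Your proposal is correct and is exactly the argument the paper intends: the paper derives Proposition~\ref{prop:dfk-vs-GKLO} by directly comparing the closed formula of Lemma~\ref{lem:image of important elements toroidal-1}(a) (with $N=0$, $a=\sN$, $q_2^{-1}=\ft$) against the subset expansion of the definition~\eqref{eq:generalized Macdonald}, with each of the $\alpha(\sN-\alpha)$ cross-ratios contributing one power of $\theta$ under $\iota$, precisely as you compute. The only cosmetic slip is your parenthetical about a ``$\sw$-monomial prefactor with exponent $k-1-\sum_{j\to i}k_j$'' trivializing: no such prefactor occurs in the $\gl_1$ formula~\eqref{eq:image of important E toroidal-1} (it belongs to the general Lemma~\ref{lem:image of important elements}), but this does not affect the argument.
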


As an immediate corollary, we obtain the following result:

\begin{Thm}\label{prop:dfk-conjecture}
All \emph{generalized Macdonald operators} $\CA_\alpha(P)\in \CB^{\fq}$ of~\eqref{eq:generalized Macdonald}
can be expressed as polynomials in $\mathcal{D}_{1;n}$'s of~\eqref{eq:fd-current}.
\end{Thm}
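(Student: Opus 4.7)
The plan is to assemble the statement as a direct corollary of Proposition~\ref{prop:dfk-vs-GKLO} together with the shuffle isomorphism of Proposition~\ref{thm:shuffle iso tootidal-1}. First, given any symmetric Laurent polynomial $P\in \BC[x_1^{\pm 1},\ldots,x_\alpha^{\pm 1}]^{S(\alpha)}$, I would set $g(y_1,\ldots,y_\alpha):=P(\fq^{-1/2}y_1^{-1},\ldots,\fq^{-1/2}y_\alpha^{-1})$, which is again a symmetric Laurent polynomial, so that Proposition~\ref{prop:dfk-vs-GKLO} yields the identity
\begin{equation*}
  \CA_\alpha(P) \,=\, \Theta^{-\alpha(\sN-\alpha)}\cdot \iota\Big(\widehat{\Phi}^{0}_{\sN}(\wt{E}_\alpha(g))\Big) \,.
\end{equation*}
Thus it suffices to express $\iota(\widehat{\Phi}^{0}_{\sN}(\wt{E}_\alpha(g)))$ as a polynomial in the $\mathcal{D}_{1;n}$'s.

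Next, I would invoke the fact (established right after~(\ref{eq:important E-elements toroidal-1})) that $\wt{E}_\alpha(g)$ satisfies the wheel conditions~\eqref{eq:wheel conditions toroidal-1} and hence lies in $S\subset\BS$. By Proposition~\ref{thm:shuffle iso tootidal-1}, there is a unique preimage $\wt{e}_\alpha(g)\in \ddot{U}^{(N),>}_{q_1,q_2,q_3}$ with $\Upsilon(\wt{e}_\alpha(g))=\wt{E}_\alpha(g)$, and this preimage is by construction a noncommutative polynomial in the loop generators $\{e_r\}_{r\in\BZ}$. By the compatibility~\eqref{eq:homom extension 1 toroidal-1} of Theorem~\ref{thm:shuffle homomorphism toroidal-1}(a), we then have $\widehat{\Phi}^{0}_\sN(\wt{E}_\alpha(g))=\wt{\Phi}^{0}_\sN(\wt{e}_\alpha(g))$, which is the \emph{same} noncommutative polynomial evaluated at the operators $\{\wt{\Phi}^{0}_\sN(e_r)\}_{r\in\BZ}$.

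Finally, I would identify $\iota(\wt{\Phi}^{0}_\sN(e_r))$ with (a scalar multiple of) $\mathcal{D}_{1;n}$. Using the comparison Lemma from Section~\ref{ssec dfk2-difference operators}, namely $\fe_1(z)^{\DFK}=\fq^{-1/2}\ft^{(1-\sN)/2}\bar{\Phi}^{0}_a(e(z))$ together with $\fd(z)=(\fq^{-1/2}-\fq^{1/2})\fe_1(\fq^{-1/2}z)^{\DFK}$ from~\eqref{eq:fd-current}, extracting the $z^n$-coefficient gives
\begin{equation*}
  \mathcal{D}_{1;n} \,=\, (\fq^{-1/2}-\fq^{1/2})\fq^{-n/2}\cdot \fq^{-1/2}\ft^{(1-\sN)/2}\cdot \iota\!\left(\wt{\Phi}^{0}_\sN(e_{-n})\right) ,
\end{equation*}
so that each $\iota(\wt{\Phi}^{0}_\sN(e_r))$ is a nonzero scalar multiple of $\mathcal{D}_{1;-r}$. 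Substituting into the polynomial expression for $\iota(\widehat{\Phi}^{0}_\sN(\wt{E}_\alpha(g)))$ obtained in the previous step, and inserting the overall prefactor $\Theta^{-\alpha(\sN-\alpha)}$, realizes $\CA_\alpha(P)$ as a (noncommutative) polynomial in the generators $\mathcal{D}_{1;n}$. The only potential subtlety, which I regard as the main point to verify carefully rather than a genuine obstacle, is the bookkeeping of constants and the degree shift in the indexing $r\leftrightarrow n$; conceptually everything is already contained in Proposition~\ref{prop:dfk-vs-GKLO}, Theorem~\ref{thm:shuffle homomorphism toroidal-1}(a), and Proposition~\ref{thm:shuffle iso tootidal-1}.
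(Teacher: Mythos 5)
Your proposal is correct and follows essentially the same route as the paper: reduce via Proposition~\ref{prop:dfk-vs-GKLO} to expressing $\wt{E}_\alpha(g)$ polynomially in the degree-one shuffle elements, which holds because $\Upsilon$ of Proposition~\ref{thm:shuffle iso tootidal-1} identifies $S$ with $\ddot{U}^>_{\fq,1/\ft,\ft/\fq}(\gl_1)$, generated by the $e_r$'s, and then push through the algebra homomorphism. The only cosmetic difference is that you identify $\iota(\wt{\Phi}^0_{\sN}(e_r))$ with a multiple of $\mathcal{D}_{1;-r}$ via the comparison lemma and~\eqref{eq:fd-current}, whereas the paper invokes the equality $\mathcal{D}_{1;n}=\CA_1(x^n)$ directly.
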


This establishes~\cite[Conjecture 1.17]{dfk2} by choosing $P$ to be a \emph{generalized Schur function}:
\begin{equation}\label{eq:Schur}
  P(x_1,\ldots,x_\alpha)=s_{a_1,\ldots,a_\alpha}(x_1,\ldots,x_\alpha)=
  \frac{\det(x_i^{a_j+\alpha-j})_{1\leq i,j\leq \alpha}}{\det(x_i^{\alpha-j})_{1\leq i,j\leq \alpha}}
  \,,\quad a_1,\ldots,a_\alpha\in \BZ\,.
\end{equation}

\begin{proof}[Proof of Theorem~\ref{prop:dfk-conjecture}]
Due to~\eqref{eq:matching two homomorpisms} and the equality $D_{1;n}=\CA_1(x^n)$, it suffices
to show that $\wt{E}_\alpha(g)\in S_\alpha$ can be expressed as a polynomial in $x^n\in S_1$. This immediately
follows from Proposition~\ref{thm:shuffle iso tootidal-1} identifying $S$ with $\ddot{U}^>_{\fq,1/\ft,\ft/\fq}(\gl_1)$,
the latter generated by $e_r=\Upsilon^{-1}(x^r)$.
\end{proof}

\begin{Rem}\label{rem:polynomial-vs-rational 4}
Interpreting the restriction of GKLO-homomorphism $\wt{\Phi}_{\sN}\colon \ddot{U}_{\fq,1/\ft,\ft/\fq}(\gl_1)\to \wt{\CA}^{\fq}$
as $\hat{\Phi}_{\sN}\colon S^{\DFK}\to \CB^{\fq}$, we thus see that the images of symmetric Laurent polynomials recover the
\emph{generalized Macdonald operators} of~\eqref{eq:generalized Macdonald}, while the image of any non-polynomial $F\in S^{\DFK}$
will necessarily contain terms with at least one $\Gamma_i^{>1}$, due to our explicit formula~\eqref{eq:explicit shuffle homom 1 toroidal-1}.
\end{Rem}


\end{document}